\newtheorem{theorem}{Theorem}
\newtheorem{conjecture}{Conjecture}
\newtheorem{lemma}{Lemma}
\newtheorem{proposition}{Proposition}
\title{A relaxation of Steinberg's Conjecture\thanks{Research supported in part by the NSA grant H98230-12-1-0226 and a NSF CSUMS grant}}
\date{\today}
\author{Owen Hill\and Gexin Yu}
\address{ Department of Mathematics, College of William and Mary, Williamsburg, VA 23185.}\email{oshill@email.wm.edu,  gyu@wm.edu}
\begin{document}
\maketitle

\begin{abstract}
A graph is {\em $(c_1, c_2, \cdots, c_k)$-colorable} if the vertex set can be partitioned into $k$ sets $V_1,V_2, \ldots, V_k$, such that for every $i: 1\leq i\leq k$ the subgraph $G[V_i]$ has maximum degree at most $c_i$.  We show that every planar graph without $4$- and $5$-cycles is $(1, 1, 0)$-colorable and $(3,0,0)$-colorable.  This is a relaxation of the Steinberg Conjecture that every planar graph without $4$- and $5$-cycles are properly $3$-colorable (i.e., $(0,0,0)$-colorable).
\end{abstract}

\section{Introduction}

%A graph is properly $k$-colorable if the vertices can be colored so that adjacent vertices receive different colors.   Graph coloring problem is one of the central problems in graph theory. The celebrated Four Color Theorem states that every planar graph is $4$-colorable.   The famous Gr\"{o}tzsch Theorem shows that every triangle-free planar graph is $3$-colorable.   Steinberg (1959) made the following conjecture

%The study of coloring of planar graphs dates back to 1881 when the Four Color Conjecture (now theorem) was first introduced.

It is well-known that the problem of deciding whether a planar graph is properly $3$-colorable is NP-complete.  Gr\"{o}tzsch in 1959~\cite{G59} showed the famous theorem that every triangle-free planar graph is $3$-colorable.   A lot of research was devoted to find sufficient conditions for a planar graph to be $3$-colorable, by allowing a triangle together with some other conditions.  One of such efforts is the following famous conjecture made by Steinberg in 1976.

\begin{conjecture}[Steinberg, \cite{S76}]
All planar graphs without $4$-cycles and $5$-cycles are $3$-colorable.
\end{conjecture}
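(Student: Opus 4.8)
The plan is to attack the statement by the method of reducible configurations and discharging, which is the standard route for $3$-colorability results in the plane and is also the engine behind the relaxations proved in this paper. I would argue by contradiction: let $G$ be a counterexample minimizing $|V(G)|+|E(G)|$, embedded in the plane. Standard reductions show $G$ may be assumed $2$-connected with minimum degree at least $3$ — any vertex of degree at most $2$ can be deleted, the smaller graph $3$-colored by minimality, and the color of the removed vertex chosen to avoid its at most two colored neighbors. Because $G$ has no $4$- or $5$-cycles, every face has length $3$ or at least $6$, and two triangles can never share an edge (a shared edge would produce a $4$-cycle); these are the ambient structural constraints I would exploit throughout.

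Next I would assemble a list of \emph{reducible configurations}: local substructures that cannot occur in $G$ because any $3$-coloring of an appropriately reduced graph extends to one of $G$. The backbone of such a list consists of light structures around $3$-vertices and around triangles — for instance, a $3$-vertex adjacent to another $3$-vertex, a triangle all of whose vertices have low degree, a triangle incident to a $3$-vertex, and various small chains of triangles linked by $3$-vertices. For each such configuration the reduction is the same in spirit: identify or delete a few vertices, invoke minimality to $3$-color the result, and then extend across the deleted part, using the fact that the forbidden cycle lengths keep the relevant neighborhoods sparse enough that a free color should remain.

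Then comes the discharging. Assign to each vertex $v$ the charge $d(v)-4$ and to each face $f$ the charge $d(f)-4$; by Euler's formula the total charge is exactly $-8$. The only objects with negative charge are $3$-vertices and $3$-faces (charge $-1$ each), while vertices of degree at least $5$ and faces of length at least $6$ carry positive surplus. I would design rules that route this surplus from big faces and high-degree vertices to the deficient $3$-faces and $3$-vertices — typically each big face pays a fixed amount to incident triangles and nearby $3$-vertices, and each high-degree vertex pays to incident triangles and adjacent $3$-vertices. Using the reducible configurations to rule out the worst local arrangements, the goal is to verify that after discharging every vertex and every face has nonnegative charge, forcing the total to be at least $0$ and contradicting the value $-8$.

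The hard part — and the reason only the defective relaxations are established here — is the discharging bookkeeping itself: one must produce a set of configurations that is simultaneously \emph{reducible} (each genuinely extends a coloring) and \emph{unavoidable} (their absence lets the charges close). The hypotheses ``no $4$-cycle, no $5$-cycle'' are weak enough that triangles and $3$-vertices can cluster in stubborn patterns, and in the proper-coloring setting there is no defect available to absorb a leftover conflict, so the margins are razor-thin. I expect that no finite rule set of the above type will push every charge to $0$ without a genuinely new idea for controlling these clusters; pinning down that idea is precisely the open content of the statement, which is why this paper settles for the $(1,1,0)$- and $(3,0,0)$-colorable relaxations rather than the full conjecture.
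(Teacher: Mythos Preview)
The paper does not prove this statement: it is Steinberg's Conjecture, recorded as an open problem, and the paper's contribution is precisely the two \emph{relaxations} (Theorems~\ref{110-coloring} and~\ref{300-coloring}). So there is no ``paper's own proof'' to compare against, and your write-up is not a proof either --- as you yourself concede in the final paragraph, the discharging scheme you outline does not close, and you are left hoping for ``a genuinely new idea for controlling these clusters.''

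That hope is misplaced, and this is the genuine gap. Steinberg's Conjecture is \emph{false}: Cohen-Addad, Hebdige, Kr\'{a}l', Li, and Salgado (J.\ Combin.\ Theory Ser.~B, 2017) constructed planar graphs with no $4$- or $5$-cycles that are not $3$-colorable. Consequently no list of reducible configurations and no discharging rules can ever make your argument go through; the obstruction is not bookkeeping but the existence of counterexamples. Your sketch of the standard discharging framework is reasonable as a description of how one would \emph{try} to attack such a problem, and it is essentially the machinery the paper deploys for the defective colorings, but presenting it as a proof proposal for the conjecture itself is a dead end. The correct response to this particular statement is simply that it is a conjecture (now refuted), not a theorem, and hence admits no proof.
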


Not much progress in this direction was made until Erd\"{o}s proposed to find a constant $C$ such that a planar graph without cycles of length from $4$ to $C$ is $3$-colorable.  Borodin, Glebov, Raspaud, and Salavatipour~\cite{BGRS05} showed that $C\le 7$.  For more results, see the recent nice survey by Borodin~\cite{B12}.

Yet another direction of relaxation of the Conjecture is to allow some defects in the color classes.  A graph is {\em $(c_1, c_2, \cdots, c_k)$-colorable} if the vertex set can be partitioned into $k$ sets $V_1,V_2, \ldots, V_k$, such that for every $i: 1\leq i\leq k$ the subgraph $G[V_i]$ has maximum degree at most $c_i$.    Thus a $(0,0,0)$-colorable graph is properly $3$-colorable.

Eaton and Hull~\cite{EH99} and independently \v{S}krekovski~\cite{S99} showed that every planar graph is $(2,2,2)$-colorable (actually choosable).     Xu~\cite{X08} proved that all planar graphs with no adjacent triangles or $5$-cycles are $(1,1,1)$-colorable.  Chang, Havet, Montassier, and Raspaud~\cite{CHMR11} proved that all planar graphs without $4$-cycles or $5$-cycles are $(2,1,0)$-colorable and $(4,0,0)$-colorable.  In this paper, we further prove the following relaxation of the Steinberg Conjecture.

%Let $\mathcal{G}$ be the set of all finite planar graphs with no $4$-cycles or $5$-cycles.  We will prove that for all $G\in \mathcal{G}$:

\begin{theorem}\label{110-coloring}
  All planar graphs without $4$-cycles and $5$-cycles are $(1,1,0)$-colorable.
\end{theorem}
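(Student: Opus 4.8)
The plan is the standard route for Steinberg\nobreakdash-type theorems. Suppose $G$ is a counterexample minimizing $|V(G)|+|E(G)|$; then $G$ is connected, every face of a fixed plane embedding is a triangle or has length at least $6$ (there are no $4$- or $5$-faces), and no two triangles share an edge. The first step is a list of easy reductions. For instance $\delta(G)\ge 3$: if $v$ has degree at most $2$, take a $(1,1,0)$-colouring of $G-v$; since $v$ has at most two neighbours, some part $V_i$ contains none of them, and placing $v$ in $V_i$ is legal because $v$ gains no neighbour in $V_i$. More generally I expect to need that short paths of low-degree vertices and $3$-vertices cannot be ``too poor'' --- e.g.\ a $3$-vertex cannot have all three neighbours of degree $3$, a triangle cannot consist of three low-degree vertices, and so on.

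The bulk of the work is choosing and verifying a set of \emph{reducible configurations} precise enough that their absence lets a discharging argument close. Each reducibility claim has the same shape: delete one or two vertices of the configuration, $(1,1,0)$-colour the rest by minimality, and extend. The extension uses two observations. A vertex can be placed in $V_3$ unless it has a neighbour there; it can be placed in $V_1$ (resp.\ $V_2$) unless it has two neighbours there or a neighbour there that already has a neighbour in the same part (a ``saturated'' neighbour). Hence a vertex of small degree, or one whose already-coloured neighbours are spread across the parts, always has a legal colour, and when it does not one can often recolour a neighbour --- move a saturated $V_1$-neighbour to $V_3$, say --- to free a part. Organising these case analyses, and simultaneously tuning the configuration list so that the discharging leaves no residual deficit, is the main obstacle.

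For discharging I would give each vertex $v$ charge $d(v)-4$ and each face $f$ charge $d(f)-4$; since $|V|-|E|+|F|=2$, the total charge is $\sum_v(d(v)-4)+\sum_f(d(f)-4)=4|E|-4(|V|+|F|)=-8$. The only objects with negative charge are triangles and $3$-vertices, each with charge $-1$, and a triangle carrying a $3$-vertex is in effect doubly deficient; the surplus sits on the $\ge 6$-faces (charge $\ge 2$) and the $\ge 5$-vertices (charge $\ge 1$). I would then design rules sending charge from big faces, along incident edges and through incident triangles, to nearby $3$-vertices and triangles, and from $\ge 5$-vertices to their incident triangles and to the poor $3$-vertices lying on them, with the amounts chosen so that every object ends nonnegative. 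Verifying the faces is essentially a count of how many triangles and $3$-vertices can crowd a long face; verifying the $3$-vertices and triangles is exactly where the reducible configurations are invoked, each poor object being shown to have enough rich neighbours to cover its deficit.

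The heart of the difficulty is therefore the coupled treatment of $3$-vertices and triangles: both are deficient by $1$ and a triangle through a $3$-vertex by $2$, so the reducible list must guarantee that every deficiency is within reach of enough surplus, and the list, the discharging rules, and the colour-extension lemmas all have to be tuned to one another. A secondary subtlety, peculiar to $(1,1,0)$-colouring as opposed to proper $3$-colouring, is that a neighbour in $V_1$ or $V_2$ obstructs a vertex only when it is saturated; this makes more extensions succeed but forces the reducibility analysis to keep careful track of saturation.
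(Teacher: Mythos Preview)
Your outline correctly identifies the general architecture --- minimal counterexample, reducible configurations, discharging --- but it is a plan rather than a proof. You do not name a single concrete reducible configuration beyond ``a $3$-vertex cannot have all three neighbours of degree $3$'', you do not state any discharging rule, and you explicitly flag the crux (``tuning the configuration list so that the discharging leaves no residual deficit'') as an obstacle still to be overcome. A referee cannot check anything here: the entire content of the theorem lives in the details you have deferred.

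More substantively, the paper's proof suggests that the purely local scheme you sketch may not close. The authors use the charge function $\mu(v)=2d(v)-6$, $\mu(f)=d(f)-6$ (so only $3$-faces are in deficit), and even then the problematic objects are $(3,4,4)$-faces: the two $4$-vertices together supply at most $2$, the pendant neighbour of the $3$-vertex at most $\tfrac12$, leaving a shortfall. Their solution is genuinely non-local: they define a \emph{chain of triangles} $(T_0,\dots,T_n)$ linked through shared $4$-vertices, prove several lemmas showing that such a chain starting at a $(3,4,4)$-face must terminate at a face with surplus (a $(4,4^+,5^+)$-face, a bad $(3,4,5^+)$-face, or a $(4,4,4)$-face with a ``good'' $4$-vertex), and route charge through a \emph{bank} that collects $\tfrac12$ or $\tfrac14$ from terminal faces and redistributes $\tfrac12$ to each needy $(3,4,4)$-face. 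The reducibility of these chains (their Lemmas~7--16) is the technical heart of the argument and has no analogue in your proposal. Your remark that ``every deficiency is within reach of enough surplus'' is exactly what fails for $(3,4,4)$-faces under any local rule, and is why the paper introduces this global mechanism. Your alternative weighting $d(v)-4$, $d(f)-4$ does not obviously sidestep this: it trades one deficit problem ($3$-faces at $-3$) for two ($3$-vertices and $3$-faces each at $-1$), and a $(3,4,4)$-face still sits next to a $3$-vertex with no nearby $5^+$-vertex to draw on.
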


\begin{theorem}\label{300-coloring}
All planar graphs without $4$-cycles and $5$-cycles are $(3,0,0)$-colorable.
\end{theorem}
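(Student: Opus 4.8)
The plan is to argue by contradiction via the discharging method, following the standard template for Steinberg-type problems. Let $G$ be a counterexample minimizing $|V(G)|+|E(G)|$; then $G$ is connected, has no $4$- or $5$-cycles, and is not $(3,0,0)$-colorable, while every proper subgraph is. First I would establish reducibility lemmas describing local configurations that cannot occur in $G$. The base observations are routine: $G$ has minimum degree at least $3$ (a vertex of degree $\le 2$ can be deleted, the rest colored by minimality, and the vertex put back — either into the ``big'' class $V_1$ of weak degree $3$ if it has $\le 3$ neighbors there, or into $V_2$ or $V_3$ otherwise); and two triangles cannot share an edge (that would create a $4$-cycle). The heart of the reducibility analysis is to rule out sparse configurations around $3$-vertices and around triangles: e.g.\ a $3$-vertex all of whose neighbors are $3$-vertices, adjacent $3$-vertices with few high-degree neighbors, triangles incident to several $3$-vertices, etc. Each such lemma is proved by deleting a small piece, invoking minimality to color the rest, and then extending — the extension step is where one must carefully track how many neighbors already lie in $V_1$ and exploit that $V_1$ tolerates weak degree $3$ while $V_2,V_3$ must stay independent.

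Next I would set up the discharging. Assign to each vertex $v$ the charge $d(v)-4$ and to each face $f$ the charge $\ell(f)-4$; by Euler's formula the total charge is $-8<0$. Since $G$ has no $4$- or $5$-cycles, every face has length $3$ or length $\ge 6$, so triangular faces (charge $-1$) are the only faces with negative charge, and all other faces have charge $\ge 2$. The discharging rules will move charge from large faces and from high-degree vertices toward triangular faces and toward $3$-vertices (which carry charge $-1$). A natural rule set: each $\ge 6$-face sends a fixed amount (say $\tfrac12$ or $\tfrac13$) across each incident edge to a neighboring triangle or to incident $3$-vertices; each vertex of degree $\ge 4$ sends charge to incident triangular faces and to adjacent $3$-vertices. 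The goal is to show that after discharging every vertex and every face has nonnegative charge, contradicting the negative total. This forces the verification, face by face and vertex by vertex, that the reducible configurations excluded above are exactly the ones that would otherwise leave negative charge.

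The main obstacle I expect is calibrating the discharging rules so that $3$-vertices and triangular faces end up nonnegative \emph{simultaneously} while large faces and high-degree vertices do not go negative — the absence of $4$- and $5$-cycles is a strong structural constraint (a triangle's neighboring faces are all of length $\ge 6$, and two triangles are never adjacent), but a $3$-vertex lying on a triangle gets ``squeezed,'' receiving help only from one large face and two other neighbors. Making the arithmetic close will likely require a fairly detailed case analysis of the neighborhood of each $3$-vertex according to how many of its neighbors are $3$-vertices and how many incident faces are triangles, and correspondingly a longer list of reducible configurations than for the weaker $(4,0,0)$ result of Chang--Havet--Montassier--Raspaud. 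A secondary subtlety is that the ``$0$'' constraints on $V_2$ and $V_3$ make the color-extension arguments less flexible than in the $(1,1,0)$ case, so the reducibility proofs must be engineered to always leave a vertex with at most three neighbors in $V_1$ (or a free color among $\{2,3\}$); getting every reducible lemma to deliver this cleanly is the delicate part of the write-up.
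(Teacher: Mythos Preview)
Your outline is a reasonable discharging template, but it diverges from the paper's actual argument in several structural ways, and at least one of those differences is not cosmetic.

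First, the paper uses the charges $\mu(v)=2d(v)-6$ and $\mu(f)=d(f)-6$ (total $-12$), not $d(v)-4$ and $\ell(f)-4$. With their choice, $3$-vertices start at charge $0$ and never need to receive anything; only $3$-faces (charge $-3$) are in deficit. Consequently the paper has \emph{no} face-to-face or face-to-vertex rules at all: $6^+$-faces sit untouched, and all charge flows from vertices to incident $3$-faces, plus from ``pendant'' $4^+$-neighbors through bad $3$-vertices to the $3$-face they sit on. Your scheme, where $\ge 6$-faces feed both triangles and $3$-vertices, is a different bookkeeping and would force you to control how many $3$-vertices and triangles a $6$-face can see simultaneously --- doable in principle, but a different case analysis.

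Second, and more substantively, the engine of the paper's reducibility is a single sharp lemma you do not anticipate: \emph{every $3$-vertex in $G$ has a neighbor of degree at least $6$}. The proof is a two-line recoloring argument exploiting that the defective class tolerates three same-colored neighbors, so a $5^-$-vertex colored $1$ with three $1$-neighbors can always be recolored properly. This lemma is what lets high-degree pendant neighbors pump a full unit of charge into $(3,3,k)$-faces from outside, and it drives the paper's ``poor/semi-poor/non-poor'' classification of $3$-faces (according to whether the pendant neighbors of the incident $3$-vertices are $5^-$ or $6^+$). The remaining reducibility lemmas (no vertex has $\lfloor d/2\rfloor$ incident poor faces; constraints on $7$- and $8$-vertices with many poor faces; a special $(3,7,7)$ configuration) are all tailored to make the vertex-to-face rules close. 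Your plan's list of candidate reducible configurations (``a $3$-vertex all of whose neighbors are $3$-vertices'', etc.) is much weaker than what is actually needed; without the $6^+$-neighbor lemma and the pendant mechanism, you will not be able to rescue a $(3,3,k)$-face whose two $3$-vertices have low-degree outside neighbors.
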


%\section{Properties of the minimum counterexample}

% Let $G\in \mathcal{G}$ be a minimum counterexample. We can introduce a series of reducible configurations, which are configurations which have a coloring that can be extended to $G$.  As a result, since $G$ is a minimum counterexample, these configurations cannot occur in $G$. We will use the discharging method to show that $G$ cannot exist.  We will first give vertices of $G$ an initial charge of $\omega(v)=2d(v)-6$ and the faces of $G$ and initial charge of $\omega(f)=d(f)-6$.  Then by Euler's formula, $|V|-|E|+|F|=2$, it follows that $\sum_{v\in V}\omega(v)+\sum_{f\in F}\omega(f)=-6(|V|-|E|+|F|)=-12$.  We will then apply a set of discharging procedures such that each vertex and face will have a non-negative charge, proving that the minimum counterexample cannot exist.

We will use a discharging argument in the proofs.  We let the initial charge of vertex $u\in G$ be $\mu(u)=2d(u)-6$, and the initial charge of face $f$ be $\mu(f)=d(f)-6$.  Then by Euler's formula, we have 
\begin{equation}\label{euler}
\sum_{v\in V(G)} \mu(u)+\sum_{f\in F(G)} \mu(f)=-12.
\end{equation}

Our goal is to show that we may re-distribute the charges among vertices and faces so the final charges of the vertices and faces are non-negative, which would be a contradiction.  In the process of discharging, we will see that some configurations prevent us from showing some vertices or faces to have non-negative charges. Those configurations will be shown to be reducible configurations, that is, a valid coloring outside of the configurations can be extended to the whole graph.  It is worth to note that in the proof of Theorem~\ref{110-coloring},  we prove a somewhat global structure, a special chain of triangles, to be reducible.

The following are some simple observations about the minimal counterexamples to the above theorems.

\begin{proposition}\label{fact}
Among all planar graphs without $4$-cycles and $5$-cycles that are not $(1,1, 0)$-colorable or $(3,0,0)$-colorable, let $G$ be one with minimum number of vertices.  Then\\
(a) $G$ contains no $2^-$ vertices.\\
(b) a $k$-vertex in $G$ can have $\alpha\le \lfloor \frac{k}{2} \rfloor$ incident $3$-faces, and at most $k-2\alpha$ pendant $3$-faces.
\end{proposition}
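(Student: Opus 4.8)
The plan is to argue by minimality, exploiting the fact that a smaller graph is colorable and that removing a low-degree vertex (or a small configuration) from $G$ leaves a smaller planar graph with no $4$- or $5$-cycles.

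For part (a), I would suppose $G$ has a vertex $u$ with $d(u)\le 2$ and set $G'=G-u$. Since $G'$ is a smaller planar graph without $4$- and $5$-cycles, it is $(1,1,0)$-colorable (resp.\ $(3,0,0)$-colorable). I then want to extend the coloring to $u$. For the $(1,1,0)$-case, $u$ has at most two neighbors; the key observation is that a vertex can be placed in class $V_i$ with $c_i\ge 1$ as long as at most one of its neighbors is in $V_i$ and already has a full quota, which is easy to arrange when $d(u)\le 2$ — indeed with two neighbors we can always find a color for $u$ among the three classes (if both neighbors lie in the two defect-$1$ classes we still have the proper class $V_3$ available unless a neighbor is there, etc.). A short case check on how the $\le 2$ neighbors are distributed among $V_1,V_2,V_3$ finishes it; the $(3,0,0)$-case is even easier since a defect-$3$ class absorbs up to three same-class neighbors. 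This contradicts the choice of $G$ as a counterexample.

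For part (b), let $v$ be a $k$-vertex. Because $G$ has no $4$-cycles, two $3$-faces incident to $v$ cannot share an edge (that would create a $4$-cycle), and more importantly no two $3$-faces at $v$ can share a neighbor of $v$ other than $v$ itself — if two triangles $vab$ and $vac$ both used the edge-neighbor $a$ we would again get a $4$-cycle $v a \dots$, and if they shared only the vertex $a$ (not as a common edge) we would still produce a short cycle through $a$. Hence each incident $3$-face "uses up" two distinct neighbors of $v$ that are not used by any other incident $3$-face, giving $2\alpha\le k$, i.e.\ $\alpha\le\lfloor k/2\rfloor$. The remaining $k-2\alpha$ edges at $v$ each lie on at most one pendant $3$-face (a $3$-face sharing exactly the one vertex $v$ with the rest, i.e.\ attached through a single edge), and two such pendant faces cannot reuse an edge, so there are at most $k-2\alpha$ of them. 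I should double-check the precise definition of "pendant $3$-face" intended here and make sure the cycle-length restriction rules out the degenerate overlaps; that bookkeeping — which overlaps among incident and pendant triangles are forbidden by the absence of $4$- and $5$-cycles — is the only delicate point, and it is elementary.

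The main obstacle, such as it is, is purely the case analysis in part (a) for the $(1,1,0)$-coloring: one must verify that \emph{every} distribution of the $\le 2$ neighbors of $u$ among the three color classes leaves a legal class for $u$, including the subtle case where a neighbor already sits in a defect-$1$ class at its quota. This is routine but must be stated carefully. Part (b) is a direct consequence of the girth-type restriction and needs no coloring argument at all.
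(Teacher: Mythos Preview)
Your approach is correct and is the standard one; the paper itself gives no proof of this proposition, calling it a ``simple observation.''

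Two small remarks. In part~(a), the case analysis you worry about is actually trivial: a vertex $u$ with at most two colored neighbors sees at most two colors, so the third color is always available and gives a \emph{proper} coloring of $u$. No defect consideration is needed, and there is no ``subtle case.''

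In part~(b), your parenthetical description of a pendant $3$-face is off: a pendant $3$-face of $v$ does \emph{not} contain $v$. By the paper's definition it is a $3$-face containing a bad $3$-vertex (a degree-$3$ vertex on a $3$-face) that is adjacent to $v$. The counting still works exactly as you intend once you use the correct definition: each pendant $3$-face is attached to $v$ through a degree-$3$ neighbor $u$; the other two neighbors of $u$ lie on that face, so $u$ lies on exactly one $3$-face not containing $v$, and (by your $4$-cycle argument) $u$ cannot also lie on a $3$-face incident to $v$. Hence pendant $3$-faces inject into the $k-2\alpha$ neighbors of $v$ not used by incident $3$-faces, giving the bound.
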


We will use the following notations in the proofs.   A {\em $k$-vertex} ($k^+$-$vertex$, $k^-$-vertex) is a vertex of degree $k$ (at least $k$, at most $k$ resp.). The same notation will apply to faces.  An {\em $(\ell_1, \ell_2, \ldots, \ell_k)$-face} is a $k$-face with incident vertices of degree $\ell_1, \ell_2, \ldots, \ell_k$. A {\em bad $3$-vertex} is a $3$-vertex on a $3$-face.  A face $f$ is a {\em pendant $3$-face} to vertex $v$ if  $v$ is adjacent to some bad $3$-vertex on $f$.  The {\em pendant neighbor} of a $3$-vertex $v$ on a $3$-face is the neighbor of $v$ not on the $3$-face.   A vertex $v$ is {\em properly colored} if all neighbors of $v$ have different colors from $v$.  A vertex $v$ is {\em nicely colored} if it shares colors with at most $\max\{s_i-1, 0\}$ neighbors, thus if a vertex $v$ is nicely colored by a color $c$ which allows deficiency $s_i>0$, then an uncolored neighbor of $v$ can be colored by $c$.

In the next section, we will give a proof to Theorem~\ref{110-coloring}; and in the last section, we will give a proof to Theorem~\ref{300-coloring}.

\section{$(1,1,0)$-coloring of planar graphs}

We will use a discharging argument in our proof.    First we will prove some reducible configurations.

Let $G$ be a minimum counterexample to Theorem~\ref{110-coloring}, that is, $G$ is a planar graph without $4$-cycles and $5$-cycles, and $G$ is not $(1,1,0)$-colorable, but any proper subgraph of $G$ is $(1,1,0)$-colorable.

The following is a very useful tool in the proofs.

\begin{lemma}\label{extending-lemma}
 Let $H$ be a proper subgraph of $G$ so that there is a $(1,1,0)$-coloring of $G-H$.   If vertex $v\in H$ satisfies either (i) $3$ neighbors of $v$ are colored, with at least two properly colored, or (ii) $4$ neighbors of $v$ are colored, all properly, then the coloring of $G-H$ can be extended to $G-(H-v)$.
 \end{lemma}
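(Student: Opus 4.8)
The statement to prove is Lemma~\ref{extending-lemma}: given a $(1,1,0)$-coloring of $G - H$ (where $H$ is a proper subgraph), a vertex $v \in H$ can be added back if either (i) three of its neighbors are colored with at least two properly colored, or (ii) four neighbors are colored, all properly. Let me think about what "can be extended to $G - (H - v)$" means: we keep the coloring of $G - H$ fixed, and we need to assign a color to $v$ from $\{1, 2, 3\}$ — where colors $1, 2$ have deficiency bound $1$ (each color class has max degree $\le 1$) and color $3$ has deficiency bound $0$ (proper) — such that the resulting partial coloring is still a valid $(1,1,0)$-coloring restricted to $G-(H-v)$.

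**The core counting argument.** The only constraints on coloring $v$ come from its *already-colored* neighbors; uncolored neighbors impose nothing at this stage. A colored neighbor $u$ of color $c$ "blocks" color $c$ for $v$ precisely when assigning $c$ to $v$ would violate the degree bound $s_c$: namely if $u$ already has $s_c$ neighbors of color $c$ among the colored vertices (so adding $v$ would push $u$'s color-$c$-degree to $s_c + 1$), or — wait, more carefully — $v$ itself can take color $c$ only if, in the graph $G-(H-v)$ with $v$ colored $c$, every vertex of color $c$ has color-$c$-degree $\le s_c$. The vertices of color $c$ whose degree could increase are $v$ and the color-$c$ neighbors of $v$. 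So color $c$ is *available* for $v$ iff (a) the number of color-$c$ neighbors of $v$ is $\le s_c$, and (b) each color-$c$ neighbor $u$ of $v$ currently has color-$c$-degree $\le s_c - 1$, i.e. $u$ is "nicely colored" in the terminology of the paper. Observe that a *properly* colored neighbor $u$ (all its neighbors differ in color from it) automatically has color-$c$-degree $0 \le s_c - 1$ when $s_c \ge 1$, so a properly colored neighbor never blocks colors $1$ or $2$, and blocks color $3$ only by the count in (a) — but one color-$3$ neighbor already makes (a) fail since $s_3 = 0$. Let me just track, for each of the three colors, whether it is blocked.

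**Case analysis.** For case (ii): four neighbors of $v$ are colored, all properly. By pigeonhole among three colors, some color $c$ is used by at most one of these four neighbors. If $c \in \{1,2\}$: at most one color-$c$ neighbor, which is properly colored hence nicely colored, so both (a) ($1 \le s_c = 1$) and (b) hold — color $c$ works for $v$. If the only color appearing $\le 1$ time is forced to be $3$ — this happens only if colors $1$ and $2$ each appear $\ge 2$ times, totaling $\ge 4$, so color $3$ appears $0$ times; then color $3$ is available (count $0 = s_3$, vacuous (b)). Either way $v$ gets a color. For case (i): three neighbors colored, at least two properly. Again pigeonhole: some color $c$ appears among at most one of the three. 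If that $c\in\{1,2\}$ and its (at most one) occurrence is on a *properly* colored neighbor, we're done as above. The delicate sub-case is when the color appearing $\le 1$ time is $3$ (forcing colors $1,2$ to appear, one once and one twice, or similar) or when color $c\in\{1,2\}$ appears exactly once but on the possibly-improperly-colored neighbor. I would handle this by noting: if color $3$ is unused by the three neighbors, color $3$ is available — done. Otherwise exactly one neighbor has color $3$ (it can't repeat), and the other two neighbors — at least one of which is properly colored, in fact since only one neighbor has the non-proper status allowed, and the color-$3$ vertex... hmm, the color-$3$ neighbor is automatically properly colored in any valid $(1,1,0)$-coloring! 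So the "at least two properly colored" plus the color-$3$ vertex being automatically proper means among the two non-color-$3$ neighbors at least one is properly colored. These two neighbors use colors from $\{1,2\}$. If they use the same color $c \in\{1,2\}$: then the other color $c' \in \{1,2\}$ is unused by neighbors, available for $v$. If they use different colors: the properly-colored one, say of color $c$, is the unique color-$c$ neighbor and is nicely colored, so $c$ is available. That exhausts case (i).

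**Expected main obstacle and write-up strategy.** The only real subtlety — the "hard part" — is bookkeeping the interaction between "properly colored" and the deficiency structure, specifically the observations that (1) a color-$3$ vertex is always properly colored in a valid $(1,1,0)$-coloring, and (2) a properly colored neighbor is automatically nicely colored for colors $1,2$, so it can only obstruct via the raw count for color $3$. Once these are isolated, each case is a short pigeonhole. I would present the proof as: first state these two observations in one or two sentences; then define "color $c$ is usable for $v$" by the (a)+(b) criterion; then dispatch case (ii) by pigeonhole in two lines; then dispatch case (i) by first checking whether color $3$ is free, and if not, reducing to the two $\{1,2\}$-colored neighbors and splitting on whether they share a color. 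No heavy computation is needed; the risk is only in missing a sub-case, so I would be careful to confirm that in case (i) the hypothesis "at least two properly colored" is genuinely used (it is — it rules out the scenario of two improperly-colored neighbors of colors $1$ and $2$ both failing condition (b)).
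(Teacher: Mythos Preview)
Your approach is essentially the same as the paper's---a short counting argument showing that some color must remain available for $v$---though you phrase it constructively while the paper argues by contradiction.

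There is one small slip in your case (i) analysis. When color $3$ is used by some neighbor, you assert that ``exactly one neighbor has color $3$ (it can't repeat)''. This is false: the three colored neighbors of $v$ need not be pairwise adjacent, so two (or even all three) of them could carry color $3$ in a valid $(1,1,0)$-coloring of $G-H$. The missing sub-case is trivially handled---if at least two of the three neighbors are colored $3$, then at most one has a color in $\{1,2\}$, so some color in $\{1,2\}$ is unused and hence available for $v$---but you should include it. The paper's contradiction version sidesteps this by first observing that if $v$ cannot be colored then the three neighbors must use three distinct colors.
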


\begin{proof}
%Let $H$ be a proper subgraph of $G$.  Then by the minimality of $G$, $G-H$ has a $(1,1,0)$-coloring.

(i) Let $v\in H$ be a vertex with $3$ colored neighbors, two of which are properly colored, such that the coloring of $G-H$ can not be extended to $v$.  Since $v$ is not $(1,1, 0)$-colorable, the three neighbors of $v$ must have different colors, and furthermore, two of the colored neighbors cannot be properly colored, a contradiction to the assumption that two of the colored neighbors of $v$ are properly colored.

(ii) Let $v\in H$ be a vertex of degree $4$ with all neighbors properly colored such that the coloring of $G-H$ can not be extended to $v$. Then due to the coloring deficiencies, $v$ must have at least $2$ neighbors colored by $1$, at least $2$ neighbors colored by $2$, and at least $1$ neighbor colored by $1$.  Then $v$ has at least five colored neighbors, a contradiction.
\end{proof}

\begin{lemma}\label{334-face}
There is no $(3,3,4^-)$-face in $G$.
\end{lemma}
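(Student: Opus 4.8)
The plan is to delete the two degree-$3$ vertices on the face and extend a coloring guaranteed by minimality. Write the face as $f=xyz$ with $d(x)=d(y)=3$ and $d(z)\le 4$, and let $x'$ (resp.\ $y'$) be the pendant neighbor of $x$ (resp.\ $y$), i.e.\ its unique neighbor off $f$; these exist since $G$ has minimum degree $3$ by Proposition~\ref{fact}(a), and since $G$ has no $4$-cycle one checks that $x',y'$ are distinct from each other, from $z$, and from the (at most two) neighbors of $z$ lying off $f$, and that neither $x'$ nor $y'$ is adjacent to $z$. Put $G'=G-\{x,y\}$; then $z$ has degree at most $2$ in $G'$, and by minimality $G'$ has a $(1,1,0)$-coloring $c$. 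It suffices to extend $c$ to $x$ and $y$, as that contradicts the choice of $G$.

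I would carry out the extension by cases on $c(z)$. The only constraints when picking colors for $x$ and $y$ are that each stays within its color budget and that coloring $x$ (resp.\ $y$) does not push $z$ or $x'$ (resp.\ $z$ or $y'$) over budget; a colored neighbor blocks a color for the vertex being colored exactly when it is \emph{saturated}, i.e.\ already has a neighbor of its own color (for color $3$ this is automatic). Recall also that $x\sim y$, so $x$ and $y$ cannot both receive color $3$. Case~1: $c(z)=3$. Case~2: $c(z)\in\{1,2\}$ and $z$ is not saturated. Case~3: $c(z)\in\{1,2\}$ and $z$ is saturated. In Cases~1 and~2, a direct inspection of the handful of relevant color assignments to $\{x,y\}$ shows one always succeeds, whatever the colors and saturation status of $x'$ and $y'$: the color obstruction coming from $z$ is common to $x$ and $y$, and $x'$, $y'$ each remove at most one further option, which is not enough to kill every joint choice.

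The heart of the matter is Case~3, say $c(z)=1$ with a neighbor $v_1\notin f$ of color $1$, so that $x,y\in\{2,3\}$. Scanning over $(c(x'),c(y'))$, a valid assignment to $\{x,y\}$ exists in every instance \emph{except} $c(x')=c(y')=2$ with both $x'$ and $y'$ saturated, in which case $x$ and $y$ are both forced to color $3$ --- impossible since they are adjacent. To escape this I would recolor $z$ first: because $d(z)\le 4$, the vertex $z$ has at most one off-$f$ neighbor $v_2$ besides $v_1$, and $c(v_2)\ne 1$ since otherwise $z$ would have two neighbors of color $1$; hence $z$ may be recolored to $3$ when $c(v_2)\ne 3$ and to $2$ when $c(v_2)=3$, and in either case the recolored $z$ has no neighbor of its new color in $G'$, so $c$ stays a valid $(1,1,0)$-coloring of $G'$, and (as $z$ is not adjacent to $x'$ or $y'$) the colors and saturation of $x',y'$ are untouched. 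We are now in Case~1 or Case~2, already handled, so $c$ extends to $x$ and $y$. This contradiction shows $G$ has no $(3,3,4^-)$-face. I expect the only real labor in the write-up to be presenting the finite check in Cases~1--3 tidily; the crux is the recoloring of $z$ in the last case, and the hypothesis $d(z)\le 4$ is exactly what makes that available.
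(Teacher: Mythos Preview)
Your argument is correct, but it is noticeably more laborious than the paper's. The paper deletes all three vertices of the face, not just the two $3$-vertices: with $w$ (your $z$) removed as well, one first colors $w$ properly (it has at most two colored neighbors since $d(w)\le 4$), then colors $v$ properly (two colored neighbors $w$ and $v'$), and finally observes that $u$ now has three colored neighbors of which two, namely $v$ and $w$, are properly colored, so Lemma~\ref{extending-lemma}(i) finishes in one stroke. By contrast, keeping $z$ in $G'$ forces you to inherit whatever color and saturation $z$ already has, which is exactly what generates your three cases and the recoloring trick at the end. Your route has the minor virtue of being self-contained (it does not invoke Lemma~\ref{extending-lemma}), and the recoloring of $z$ is a nice illustration of how the hypothesis $d(z)\le 4$ is used, but the paper's approach of enlarging the deleted set to include $z$ buys a much cleaner proof: choosing $z$'s color fresh and proper sidesteps the whole case analysis.
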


\begin{proof}
Let $uvw$ be a $(3,3,4^-)$-face in $G$ with $d(u)=d(v)=3$ and $d(w)\le 4$.  Then $G$\textbackslash$\{u,v,w\}$ is $(1,1,0)$-colorable.  Color $w$ and $v$ properly, then $u$ is colorable by Lemma~\ref{extending-lemma},  thus $G$ is $(1,1,0)$-colorable,  a contradiction.
\end{proof}

\begin{lemma}\label{5-vertex}
There is no $5$-vertex that is incident to two $(3,4^-,5)$-faces and adjacent to a $3$-vertex in $G$.
\end{lemma}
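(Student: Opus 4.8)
The statement is a reducibility lemma, so the plan is to show that the minimum counterexample $G$ cannot contain the configuration. Assume $v$ is a $5$-vertex on two $(3,4^-,5)$-faces $f_1=va_1b_1$ and $f_2=va_2b_2$, with $d(a_1)=d(a_2)=3$, $d(b_1)\le 4$, $d(b_2)\le 4$, and let $w$ be a $3$-neighbor of $v$. First I would record the local structure forced by the absence of $4$- and $5$-cycles: two $3$-faces at $v$ can share neither an edge nor a common neighbor of $v$, so $a_1,b_1,a_2,b_2$ are four distinct neighbors of $v$; writing $c$ for the fifth neighbor, $w$ is $c$ or one of $a_1,a_2$. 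Writing $N(a_i)=\{v,b_i,p_i\}$, the same girth restriction gives $p_i\notin N[v]$, and Proposition~\ref{fact}(b) shows $a_i$ lies on no $3$-face besides $f_i$.

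Next I would delete $\{v,a_1,a_2\}$ and invoke minimality: $G-\{v,a_1,a_2\}$ has a $(1,1,0)$-coloring $\phi$, colors $1$ and $2$ having deficiency $1$ and color $3$ deficiency $0$. I will extend $\phi$ by coloring $v$, then $a_1$, then $a_2$. In $G-\{v,a_1,a_2\}$ the vertex $v$ has exactly the three colored neighbors $b_1,b_2,c$, so by Lemma~\ref{extending-lemma}(i) it is colorable unless these three receive three distinct colors with the one colored $1$ and the one colored $2$ both improperly colored. In that case I would recolor a culprit of degree at most $4$ --- at least one of the two culprits is among $b_1,b_2$ and hence has at most two colored neighbors in $G-\{v,a_1,a_2\}$ --- to a color of $\{1,2,3\}$ that is proper for it and avoids the blocked color; this keeps $\phi$ valid and frees a color for $v$, and with a little care one arranges that $v$ itself ends up \emph{properly} colored.

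With $v$ colored, each $a_i$ is a $3$-vertex all of whose neighbors $v,b_i,p_i$ are colored, so Lemma~\ref{extending-lemma}(i) applies again unless these three carry distinct colors with the $1$- and $2$-colored ones improper; the escape is to recolor the degree-$\le 4$ vertex $b_i$, or, as a last resort, to re-choose the color of $v$ --- this is where having $v$ properly colored, or having an extra degree-$3$ neighbor $w$ of $v$ available to recolor, does the work. A final check confirms $a_1$ and $a_2$ are not both forced onto color $1$, which would leave $v$ with two neighbors of color $1$.

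The main obstacle is exactly this coordination: a recoloring that unblocks $v$ must not destroy the ``properly colored'' neighbors that Lemma~\ref{extending-lemma}(i) needs in order to color $a_1$ and $a_2$, and conversely. Carrying it out requires a finite case analysis on the colors $\phi$ assigns to $b_1,b_2,c,p_1,p_2$; it stays bounded because $b_1,b_2$ have degree at most $4$ and $a_1,a_2,w$ have degree $3$, so each vertex we touch is recolorable with only local effect, and because Lemma~\ref{334-face} and Proposition~\ref{fact} forbid the degenerate incidences (an extra triangle through some $b_i$, a pendant neighbor $p_i$ landing in $N[v]$, or a second $3$-face at $a_i$) that would otherwise enlarge the analysis.
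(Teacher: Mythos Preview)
Your plan is a genuine sketch rather than a proof, and the gap is precisely in the ``coordination'' step you flag but do not carry out. Concretely: after you recolor one $b_i$ and put a proper color on $v$, nothing prevents \emph{both} $a_1$ and $a_2$ from being forced onto color~$1$. For instance, take $b_1$ colored $1$ (improperly, with outside neighbours colored $1,2$), $c$ colored $2$ (improperly, with outside neighbours colored $1,2$), $b_2$ colored $3$, and $p_1,p_2$ each colored $2$ improperly. Your recipe recolors $b_1$ to $3$ and sets $v=1$ (the only option, since $c$ blocks $2$). Then $a_1$'s neighbours are $v{=}1$, $b_1{=}3$, $p_1{=}2$ (improper), forcing $a_1=1$; now $v$ is improper, and $a_2$'s neighbours $v{=}1$ (improper), $b_2{=}3$, $p_2{=}2$ (improper) block all three colors. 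Your fallback ``re-choose the color of $v$'' is unavailable here, and ``recolor $b_2$'' may also be blocked depending on $b_2$'s outside neighbours. The actual escape is to recolor $c$ --- and it is exactly here that $d(c)=3$ is needed --- but you never delete $c$, so this move and its consequences for the rest of the coloring are not under control in your setup. (Relatedly, allowing the ``extra $3$-vertex'' $w$ to be one of $a_1,a_2$ trivialises the hypothesis; the lemma is only nonvacuous, and only used in discharging, when the \emph{fifth} neighbour has degree~$3$.)

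The paper sidesteps all of this by deleting the full neighbourhood $\{u,v,w,x,y,z\}$ (in their notation), not just $\{v,a_1,a_2\}$. With all six vertices uncolored, $u,w,y$ each have at most two colored neighbours and can be colored properly; then $x,z$ can be colored properly; and now all five neighbours of $v$ are freshly and properly colored, so a two-line pigeonhole on the color pattern around $v$ (one of the triangles must carry colors $\{1,2\}$, and its $4^-$-vertex, being properly colored, can absorb a switch) finishes. Deleting more vertices up front is what makes the coordination problem disappear.
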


\begin{center}
\begin{figure}[ht]
\includegraphics[scale=1]{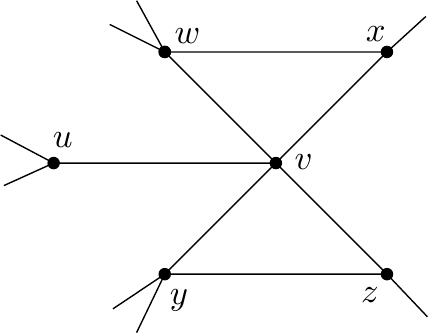}
\caption{Figure for Lemma~\ref{5-vertex}}
\label{f1}
\end{figure}\end{center}

\begin{proof}
Let $v$ be a $5$-vertex with neighbors $u, w, x, y, z$ so that $wx, yz\in E(G)$ and $d(u)=d(x)=d(z)=3$ and $d(w), d(y)\le 4$ (See Figure~\ref{f1}).  By the minimality of $G$, $G$\textbackslash $\{u,v,w,x,y,z\}$ is $(1,1,0)$-colorable.  Properly color $u$, $w$, and $y$, then properly color $x$ and $z$.  For $v$ to not be colorable, $v$ must have two neighbors colored by $1$, two neighbors colored by $2$ and one neighbor colored by $3$.  Since the $w,x$ and $y,z$ vertex pairs must be colored differently, one of them must have the colors $1$ and $2$.  W.l.o.g. we can assume that $w$ is colored by $1$ and $x$ by $2$.  Then since $w$ is properly colored, we can either recolor $x$ by $1$ or $3$, and color $v$ by $2$ obtaining a coloring of $G$, a contradiction.
\end{proof}

\begin{lemma}\label{333-vertices}
No $3$-vertex in $G$ can be adjacent to two other $3$-vertices. In particular, the $3$-vertices on a $(3,3, 5^+)$-face must have another neighbor with degree four or higher.
\end{lemma}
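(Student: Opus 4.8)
The plan is to show this configuration is reducible via a short extension argument resting on Lemma~\ref{extending-lemma}(i). Suppose for contradiction that $v$ is a $3$-vertex with neighbours $u_1,u_2,u_3$, where $d(u_1)=d(u_2)=3$. These are three distinct vertices, so $u_3\notin\{u_1,u_2,v\}$. By the minimality of $G$, the graph $G-\{u_1,u_2,v\}$ has a $(1,1,0)$-coloring, and the goal is to extend it to all of $G$, contradicting the choice of $G$.

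The crucial point is the order in which the three deleted vertices are coloured: the two $3$-neighbours first, then $v$. Since $d(u_1)=3$ and $v$ is a neighbour of $u_1$ that is still uncoloured, $u_1$ has at most two coloured neighbours; a vertex with at most two coloured neighbours always has a colour disjoint from all of them, and assigning it such a colour is a legal extension of a $(1,1,0)$-coloring (it then has no neighbour of its own colour), so colour $u_1$ properly. Next, $u_2$ also has degree $3$ with $v$ still uncoloured, so $u_2$ still has at most two coloured neighbours — this already takes into account the edge $u_1u_2$ should it be present — and we colour $u_2$ properly, avoiding the colours of all of its coloured neighbours, $u_1$ included. Picking $u_2$'s colour to differ from $u_1$'s keeps $u_1$ properly coloured, so at this stage both $u_1$ and $u_2$ are properly coloured.

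Finally, $v$ now has all three neighbours $u_1,u_2,u_3$ coloured, two of which ($u_1,u_2$) are properly coloured, so Lemma~\ref{extending-lemma}(i) extends the coloring to $v$; this produces a $(1,1,0)$-coloring of $G$, a contradiction, proving the first assertion. The ``in particular'' clause follows at once: if $uvw$ is a $(3,3,5^+)$-face with $d(u)=d(v)=3$, then $u$ is a $3$-vertex adjacent to the $3$-vertex $v$, so if $u$'s third neighbour (its pendant neighbour) also had degree $3$ — it cannot have degree $2^-$ by Proposition~\ref{fact}(a) — then $u$ would be a $3$-vertex adjacent to two $3$-vertices, contradicting what was just shown; hence that pendant neighbour has degree at least $4$, and symmetrically for $v$. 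I do not expect a real obstacle here: the only things to watch are to colour $v$ \emph{last} (colouring it first could leave $v$, and then each $u_i$, with three coloured neighbours too few of which are proper) and to note that $u_3$ genuinely lies in $G-\{u_1,u_2,v\}$, which it does.
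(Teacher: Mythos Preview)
Your proof is correct and follows essentially the same approach as the paper's own proof: delete $v$ together with its two $3$-neighbours, properly colour those neighbours first (each has at most two coloured neighbours), then invoke Lemma~\ref{extending-lemma}(i) to colour $v$. Your write-up is simply more explicit about why each $u_i$ can be properly coloured and why $u_1$ stays properly coloured after $u_2$ is coloured, and you spell out the ``in particular'' clause; the underlying argument is identical.
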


\begin{proof}
Let $v$ be a $3$-vertex with $x$ and $y$ being two neighbors of degree $3$.  By the minimality of $G$, $G$\textbackslash $\{v,x,y\}$ is $(1,1,0)$-colorable.  Then we can first properly color $x$ and $y$, and then by Lemma~\ref{extending-lemma} color $v$ to get a coloring of $G$, a contradiction.
\end{proof}

\begin{lemma}\label{344-face}
The pendant neighbor of the $3$-vertex on a $(3,4,4)$-face must have degree $4$ or higher.
 \end{lemma}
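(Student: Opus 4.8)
The plan is to suppose the statement fails and contradict the minimality of $G$. Let $uvw$ be a $3$-face with $d(u)=3$ and $d(v)=d(w)=4$, let $t$ be the pendant neighbor of $u$ (the neighbor of $u$ off the face), and assume for contradiction that $d(t)=3$; by Proposition~\ref{fact}(a), $d(t)\ge 3$. Write $v_1,v_2$ for the two neighbors of $v$ off the face, $w_1,w_2$ for those of $w$, and $t_1,t_2$ for the two neighbors of $t$ other than $u$. The first thing I would record is what the absence of $4$-cycles forces: $v$ and $w$ have only $u$ as a common neighbor, and $t$ is adjacent to neither $v$ nor $w$ and shares no neighbor with either of them (any such edge or common neighbor, together with the length-$3$ path through $u$, closes a $4$-cycle). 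Hence $v_1,v_2,w_1,w_2,t_1,t_2$ are six distinct vertices lying outside $\{u,v,w,t\}$. By minimality, $G':=G-\{u,v,w,t\}$ has a $(1,1,0)$-coloring $c$, and it suffices to extend $c$ to $u,v,w,t$.

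The engine of the proof is that $v$, $w$, and $t$ each have only two colored neighbors in $G'$. In particular $t$, whose colored neighbors are $t_1,t_2$, can always be colored properly, and the sets $A:=\{1,2,3\}\setminus\{c(v_1),c(v_2)\}$ and $B:=\{1,2,3\}\setminus\{c(w_1),c(w_2)\}$ of colors properly coloring $v$ and $w$ are nonempty. If I can choose $p\in A$ and $q\in B$ with $p\ne q$, I color $v\leftarrow p$, $w\leftarrow q$, and $t$ properly; then $v,w,t$ are all properly colored (the only internal edge among them is $vw$, and $p\ne q$), so $u$ has three colored neighbors, at least two properly colored, and Lemma~\ref{extending-lemma}(i) colors $u$, a contradiction. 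Such a choice of $p,q$ fails only when $A=B=\{a\}$ for a single color $a$, i.e. $\{c(v_1),c(v_2)\}=\{c(w_1),c(w_2)\}=\{1,2,3\}\setminus\{a\}$ (so in particular $c(v_1)\ne c(v_2)$ and $c(w_1)\ne c(w_2)$).

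For the degenerate case, if $a\in\{1,2\}$ I color $v\leftarrow a$ and $w\leftarrow a$: since $v_1,v_2,w_1,w_2$ all avoid color $a$, the only monochromatic edge created is $vw$, an admissible color-$a$ defect at both ends, and now $u$ sees $a$ on two of its neighbors, so it gets a proper color different from $c(t)$, a contradiction. The case $a=3$ is the substance of the lemma: $v$ and $w$ can each be \emph{properly} colored only with $3$, yet they are adjacent and color $3$ has deficiency $0$, so at least one of them must take a color in $\{1,2\}$, which forces a defect into one of its off-face neighbors; that defect is admissible precisely when the target neighbor is properly colored in $c$. If at least one of $v_1,v_2,w_1,w_2$ is properly colored in $c$ — say $v_1$, of color $1$ (relabel so) — I set $v\leftarrow 1$, $w\leftarrow 3$, and $t$ properly, and then $u$ is colorable: when $c(t)\in\{1,3\}$ it sees at most two colors, and when $c(t)=2$ the neighbors $t_1,t_2$ both avoid $2$, so $u\leftarrow 2$ is an admissible defect into $t$ — again a contradiction.

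The obstacle I expect to be the hardest is the residual sub-case of $a=3$ in which none of $v_1,v_2,w_1,w_2$ is properly colored in $c$: each then lies on an edge of $V_1$ or $V_2$ that is a whole component of its color class (since $V_1,V_2$ have maximum degree $1$), and consequently $v$ and $w$ could only receive color $3$ — impossible — so one must alter $c$ before extending. I would attack this by fixing $c$ from the outset to be a $(1,1,0)$-coloring of $G'$ minimizing the total number of edges inside $V_1\cup V_2$, and then argue that the forced local structure around $v_1,v_2,w_1,w_2$ (vertex-disjoint monochromatic $K_2$'s, each its own component) permits a defect-reducing recoloring of one of the partners — onto color $3$ when that is available, otherwise onto the other deficient color — yielding a valid $(1,1,0)$-coloring of $G'$ with strictly fewer such edges, contrary to the choice of $c$; once $c$ is so adjusted, $v$ gets a color in $\{1,2\}$ and the extension proceeds as above. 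Pinning down that such a recoloring is always available and introduces no new defect elsewhere in $G'$ is the delicate core of the argument.
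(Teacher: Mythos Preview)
Your reduction to the ``residual'' sub-case ($A=B=\{3\}$ with every $v_i,w_j$ already carrying a defect) is correct, and up to that point the argument is fine (modulo the tiny omission of coloring $t$ in the $a\in\{1,2\}$ branch). The gap is real, though, and the minimality device you propose does not close it. If $c$ minimizes the number of defective edges and $v_1$ has colour $1$ with partner $v_1'$, your plan is to recolor $v_1'$ onto $3$, or failing that onto $2$. But if $v_1'$ has a colour-$3$ neighbour and a single properly coloured colour-$2$ neighbour $z$, recoloring $v_1'\to 2$ simply trades the defective edge $v_1v_1'$ for $v_1'z$: the total is unchanged, no contradiction to minimality, and you are still stuck with $v_1$ improper. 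Nothing about the local picture forbids this, since you know essentially nothing about $v_1'$ (or about $v_1$ beyond its adjacency to $v$). So the ``delicate core'' is not just delicate --- as stated, it fails.

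The paper sidesteps this by removing \emph{only the two $3$-vertices} (your $u$ and $t$), leaving the $4$-vertices $v,w$ coloured in the partial colouring. This is the key economy: because $v$ and $w$ are already validly coloured in $G\setminus\{u,t\}$, their off-face neighbourhoods are tightly constrained, and one can read off exactly what colours $v_1,v_2,w_1,w_2$ carry. In the hard case both $u$ and $t$ are forced to colour $3$, whence $v,w$ carry colours $1,2$; one then argues that the two outside neighbours of $v$ must have colours $\{1,3\}$ and those of $w$ colours $\{2,3\}$ (else a single local recolor already wins), and finishes by swapping the colours of $v$ and $w$ and putting colour $1$ on $u$. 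The point is that keeping $v,w$ in the coloring buys you exactly the control over their neighbours that your approach lacks; removing all four vertices throws that information away and leaves an obstruction you cannot resolve by a global minimality argument.
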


\begin{center}\begin{figure}[ht]
\includegraphics[scale=0.8]{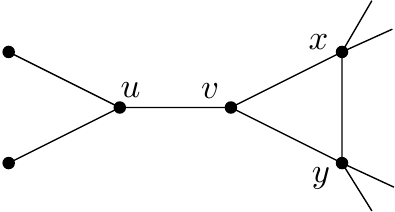} \hskip 0.4in
\includegraphics[scale=0.8]{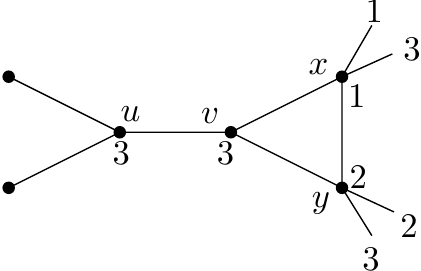}
\caption{Figure for Lemma~\ref{344-face}}
\label{fig2}
\end{figure}\end{center}

\begin{proof}
Let $vxy$ be a $(3,4,4)$-face in $G$ such that the pendant neighbor $u$ of the $3$-vertex $v$ has degree $3$ (See Figure~\ref{fig2}).  By the minimality of $G$, $G$\textbackslash $\{u,v\}$ is $(1,1,0)$-colorable.  We properly color $u$ and then color $v$ differently from both $x$ and $y$.  If $u$ and $v$ are not both colored by $3$, then we get a coloring for $G$, a contradiction,  so we may assume both $u$ and $v$ are colored by $3$.  This means that both $u$ and $v$ have two remaining neighbors colored by $1$ and $2$.  Let $x$ and $y$ be colored by $1$ and $2$ respectively.  The neighbors of $x$ must be colored by $1$ and $3$ or else we could recolor $v$ by $1$ and $x$ by $3$ if necessary to obtain a coloring of $G$.  Likewise, the neighbors of $y$ must be colored by $2$ and $3$.  In this case we switch the colors of $x$ and $y$ and color $v$ by $1$  to obtain a coloring of $G$, a contradiction again.
\end{proof}

%Let \emph{a chain of triangles from $T_0$ to $T_n$} be a sequence of triangles, $T_0, T_1, \ldots, T_n$, such that (i) $T_0$ is a $(3,4,4)$-face and $T_n$ is a $(3^+, 4, 4^+)$-face, and (ii) for all $i$, $0<i\leq n-1$, $T_{i}$ is a $(4, 4,4)$-face, and (iii) for $0\le i\le n-1$, $T_i$ and $T_{i+1}$ share a $4$-vertex $t_i$.  In a chain of triangles from $T_0$ to $T_n$, let $x_i\in T_i$ for $0\le i\le n$ be a non-connecting $4^+$-vertex.

Let a {\em $(T_0, T_1, \ldots, T_n)$-chain} be a sequence of triangles, $T_0, T_1, \ldots, T_n$, such that (i) $T_0$ is a $(3,4,4)$-face and $T_n$ is a $(3^+, 4, 4^+)$-face, and all other triangles are $(4, 4,4)$-faces, and (ii) for $0\le i\le n-1$, $T_i$ and $T_{i+1}$ share a $4$-vertex $t_i$.  In a $(T_0, T_1, \ldots, T_n)$-chain, let $x_i\in T_i$ for $0\le i\le n$ be a non-connecting $4^+$-vertex.

Let a \emph{special 4-vertex} be a $4$-vertex that is incident to one $3$-face and has two pendant $3$-faces,  and let a $3$-face be a \emph{special 3-face} if it has at least one special $4$-vertex.  Let a \emph{good 4-vertex} be a $4$-vertex with only one incident $3$-face and at most one pendant $3$-face.

We will prove in the following lemmas that a $(3,4,4)$-face $T_0$ may get help in discharging from a $(3^+, 4^+, 5^+)$-face or special $3$-face $T_n$ through a $(T_0, T_1, \ldots, T_n)$-chain.

\begin{lemma}
There are no special $(3,4,4)$-faces in $G$.
\end{lemma}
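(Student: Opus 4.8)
The plan is to assume for contradiction that $G$ contains a special $(3,4,4)$-face $T_0=vwy$ with $d(v)=3$, $d(w)=d(y)=4$, where, relabelling if necessary, $w$ is a special $4$-vertex, and then to construct a $(1,1,0)$-coloring of $G$. Since $w$ lies on $T_0$, its two remaining neighbors $a,b$ lie off $T_0$, and since $w$ is special it has two pendant $3$-faces, each realized by a bad $3$-vertex adjacent to $w$. Because a degree-$3$ vertex lies on at most one $3$-face (Proposition~\ref{fact}(b)) and $y$ is not a bad $3$-vertex, these two pendant $3$-faces must be realized by $a$ and $b$ respectively; in particular $d(a)=d(b)=3$. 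The vertices $v,w,y,a,b$ are automatically pairwise distinct (we have $a,b\notin\{v,y\}$ by construction, and $a\ne b$ since $d(w)=4$), and by Lemma~\ref{344-face} the pendant neighbor $u$ of $v$ satisfies $d(u)\ge 4$, so $u\notin\{v,w,y,a,b\}$.

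Set $X=\{v,w,y,a,b\}$, so $|X|=5$ and the four neighbors of $w$ are exactly $v,y,a,b$. By minimality of $G$, the proper subgraph $G-X$ admits a $(1,1,0)$-coloring, and I would extend it one vertex at a time in the order $a,b,y,v,w$. For each of $a,b,y,v$ in turn I assign a color different from all of its currently-colored neighbors; this is possible because each such vertex has at most two colored neighbors at its turn ($a,b,v$ have degree $3$ while $w$ is still uncolored, and $y$ has degree $4$ with both of its $T_0$-neighbors $v,w$ still uncolored), so with three colors a free choice always remains, and the partial coloring stays valid since no deficiency is used. As no vertex is recolored and every vertex, once colored, is chosen to differ from each of its later-colored neighbors, after the fourth step each of $a,b,y,v$ is properly colored, its only possibly-conflicting neighbor $w$ still being uncolored. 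Hence $w$ has all four neighbors $v,y,a,b$ colored and properly colored, so Lemma~\ref{extending-lemma}(ii) lets us color $w$ as well, giving a $(1,1,0)$-coloring of $G$ and the desired contradiction.

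The role of the hypothesis ``special'' is precisely that it forces the two off-$T_0$ neighbors $a,b$ of $w$ to have degree $3$; this is exactly what guarantees they have at most two colored neighbors when their turn comes and can therefore be made properly colored, and for a general $(3,4,4)$-face this step of the argument fails. I expect the only genuine care needed is in checking that the greedy extension is never blocked in degenerate configurations --- for instance when $a\sim b$, or when $a$ or $b$ is adjacent to $y$ --- but in every such case the vertex being colored still has at most two colored neighbors, so the conclusion is unaffected; all of this is routine given Proposition~\ref{fact} and Lemmas~\ref{extending-lemma} and~\ref{344-face}.
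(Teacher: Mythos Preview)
Your proof is correct and follows essentially the same approach as the paper: both arguments delete the same five vertices $\{v,w,y,a,b\}$, properly color the four neighbors of the special $4$-vertex one at a time, and then invoke Lemma~\ref{extending-lemma}(ii) to color that vertex last. Your coloring order $a,b,y,v,w$ differs from the paper's (which, in your notation, is $y,v,a,b,w$), but this is immaterial; you are also more explicit than the paper in checking distinctness of the deleted vertices and in handling the possible adjacencies among $a,b,y$, and your appeal to Lemma~\ref{344-face} to rule out $u\in\{a,b\}$ is a nice touch that the paper omits.
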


\begin{proof}
Let $uvw$ be a special $(3,4,4)$-face in $G$ such that $d(v)=d(w)=4$.  W.l.o.g. we can assume that $v$ is a special $4$-vertex with pendant neighbors $v_1$ and $v_2$.  By the minimality of $G$, $G$\textbackslash$\{u,v,v_1,v_2,w\}$ is $(1,1,0)$-colorable.  We can properly color $w$ and $u$ in that order then properly color $v_1$ and $v_2$.  Then by Lemma~\ref{extending-lemma}, we can color $u$, obtaining a coloring of $G$, a contradiction.
\end{proof}

The following is a very useful tool in extending a coloring to  a chain.

\begin{lemma}\label{chain-extending-lemma}
%Let $T_0, T_1, \cdots,T_n$ be a chain of triangles in $G$ where $n\geq 1$ and $T_1$ is a $(4,4^-,k)$-face. If $G-T_0$ has a coloring such that the $k$-vertex of $T_1$ is properly colored, or it shares the same color with the $4^-$-vertex, then the coloring can be extended to $T_0$.
Consider a $(T_0, T_1, \cdots,T_n)$-chain with $n\geq 1$ and $T_n$ being a $(4,4^-,k)$-face. If $G$\textbackslash $\{T_0, T_1, \cdots, T_{n-1}\}$ has a coloring such that the $k$-vertex of $T_n$ is properly colored, or it shares the same color with the $4^-$-vertex, then the coloring can be extended to $G$.
\end{lemma}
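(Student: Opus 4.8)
The plan is to induct on $n$, peeling off the triangles of the chain from the high-indexed end toward $T_0$, or equivalently, to color $T_{n-1}, T_{n-2}, \dots, T_0$ in that order given the hypothesized coloring of $G\setminus\{T_0,\dots,T_{n-1}\}$. First I would fix notation consistent with the chain definition: for $0\le i\le n-1$, $T_i$ and $T_{i+1}$ share the $4$-vertex $t_i$; write $T_i = t_{i-1} t_i x_i$ for $1\le i\le n-1$ (with $x_i$ the non-connecting $4^+$-vertex of $T_i$), $T_0 = u\, t_0\, x_0$ where $u$ is the $3$-vertex of the $(3,4,4)$-face $T_0$, and $T_n = t_{n-1}\, y\, z$ where $y$ is the $4^-$-vertex and $z$ the $k$-vertex. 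The vertices of $G\setminus\{T_0,\dots,T_{n-1}\}$ that are already colored include $t_{n-1}$, $y$, $z$, and every $x_i$ for $0\le i\le n-1$ (these $x_i$ are not connecting vertices of the chain, hence lie outside the deleted set — this needs a one-line check, using that consecutive triangles share only the $t_i$'s and that $G$ has no $4$-cycles so two triangles of the chain cannot share an edge other than at a connecting vertex). So the only uncolored vertices are $t_0, t_1, \dots, t_{n-2}$ (the connecting vertices strictly inside the chain), plus possibly $u$ if $u$ is not shared with anything outside.

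Next I would run the extension. The base case $n=1$: here $T_0 = u\,t_0\,x_0$ is a $(3,4,4)$-face and $T_1 = t_0\, y\, z$ is a $(4,4^-,k)$-face; the colored neighbors in hand are $x_0$, $y$, $z$ (neighbors of $t_0$) and the pendant neighbor of $u$. By hypothesis $z$ is either properly colored or shares its color with $y$; in either case among the three colored neighbors $x_0,y,z$ of the $4$-vertex $t_0$ at least two are "effectively" available in the sense needed, so I can pick a color for $t_0$ respecting the deficiencies — concretely, argue as in Lemma~\ref{extending-lemma}(i)/(ii): a $4$-vertex with three colored neighbors, where the hypothesis guarantees we are not in the forced-all-distinct bad case, is colorable; then $u$ (degree $3$, now with two colored neighbors $t_0$ and its pendant neighbor) is colorable by Lemma~\ref{extending-lemma}(i). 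For the inductive step $n\ge 2$: color $t_{n-2}$ first. Its colored neighbors are $x_{n-1}$ and $t_{n-1}$ (from $T_{n-1}$) — only two — but once we also observe that $t_{n-2}$ lies on $T_{n-1}=t_{n-2}t_{n-1}x_{n-1}$, the key point is that the hypothesis on $z$ propagates one step: having colored $t_{n-1}$ so that it is properly colored (or shares with an appropriate neighbor), the pair $(t_{n-2}, T_{n-1})$ now plays the role that $(t_0, T_1)$ played before, and we reduce to a $(T_0,\dots,T_{n-1})$-chain whose terminal face $T_{n-1}$ is now a $(4,4,4)$-face with its "$k$-vertex" $x_{n-1}$ properly colorable. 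Formalizing this reduction — showing that after coloring $t_{n-1}$ we are genuinely in the hypothesis of the lemma for the shorter chain — is the crux.

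The main obstacle I anticipate is exactly that propagation step: when I color the connecting vertex $t_{n-1}$ (a $4$-vertex on $T_n$), I must color it not just legally but in a way that leaves $x_{n-1}$ (or, at the next stage, $t_{n-2}$) in the "properly colored or color-sharing with the $4^-$-partner" state required to reapply the lemma. This means the induction hypothesis has to be stated carefully enough to carry this extra condition — essentially, that $t_{n-1}$ can be colored with a prescribed relationship to one specified neighbor on $T_{n-1}$. I would handle this by choosing, at each peeling step, the color of the connecting $4$-vertex to equal the color of its already-colored chain-neighbor $x_i$ whenever the deficiencies permit (colors $1$ and $2$ allow one repeat), and otherwise to be a proper color; a short case analysis on whether the relevant neighbors use color $3$ shows one of these two options always succeeds and always delivers the state needed for the next step. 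The very last step, coloring $t_0$ and then $u$, is then the base case verified above, and we are done.
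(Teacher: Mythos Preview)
Your plan rests on a misreading of what is deleted. In the statement, $G\setminus\{T_0,\dots,T_{n-1}\}$ means $G$ minus \emph{all vertices} of the triangles $T_0,\dots,T_{n-1}$, i.e.\ the set $\{u\}\cup\{t_i,x_i:0\le i\le n-1\}$; this is how the paper uses the lemma throughout (see for instance the set $S=\{t_i,x_i:0\le i\le n-1\}$ in the proof of Lemma~\ref{chain-to-344}). So the non-connecting vertices $x_0,\dots,x_{n-1}$ are \emph{not} already colored, and neither is $t_{n-1}$. Your claim that ``the only uncolored vertices are $t_0,\dots,t_{n-2}$, plus possibly $u$'' is therefore wrong, and both your base case and your inductive step collapse.

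Concretely, in your $n=1$ analysis you say the colored neighbors of $t_0$ are $x_0,y,z$ and then invoke Lemma~\ref{extending-lemma}; but $x_0$ is uncolored, so $t_0$ has only two colored neighbors ($y=t_1$ and $z=x_1$) at that point, and Lemma~\ref{extending-lemma} does not apply. The paper's proof of the $n=1$ case first \emph{properly colors} $x_0$ and then does a genuine multi-case analysis (on whether $x_1,t_1$ share a color, on whether $x_0$ is colored $3$, on the colors of the outside neighbors of $t_1$, etc.)\ to push a color onto $t_0$ and then $u$; this is the real work of the lemma and your plan does not touch it. Likewise, in your inductive step each ``peeling'' move must color two vertices $x_{n-j}$ and $t_{n-j}$, not one; the paper colors $x_{n-j}$ properly first and then colors $t_{n-j}$ via Lemma~\ref{extending-lemma} (or properly when possible), and the invariant carried forward is exactly ``$x_{n-j}$ is still properly colored, or $t_{n-j}$ shares its color''. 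Your high-level idea of peeling from the $T_n$ end and maintaining such an invariant matches the paper's strategy, but once you correct the deleted set you will need to supply the missing $n=1$ case analysis and rework the step to color two vertices per iteration.
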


\begin{proof}
%Let $T_0$ be a $(3,4,4)$-face in a chain of triangles where $n\geq 1$ and
We assume that the $(4,4^-,k)$-face $T_n$ has $k$-vertex $x_n$ and $4^-$-vertex $t_n$.  Also let $G$\textbackslash $\{T_0, T_1, \cdots, T_{i-1}\}$ has a coloring such that $x_n$ is properly colored or shares the same color with $t_n$ and $G$ does not have a $(1,1,0)$-coloring. Finally let $u$ be the $3$-vertex of $T_0$ and let $w$ be the pendant neighbor of $u$.

We consider two cases.  First let $n=1$. If $x_1$ and $t_1$ have the same color,  then we can properly color $x_0$ and $t_0$ in that order, thus by Lemma~\ref{extending-lemma} we can color $u$ so $G$ has a $(1,1,0)$-coloring, a contradiction.  So we know that $x_1$ and $t_1$ must be colored differently, and further $x_1$ is colored properly.  We can properly color $x_0$.  If $x_0$ and $w$ share the same color then we can color $t_0$ by Lemma~\ref{extending-lemma} and properly color $u$, a contradiction.  So we may assume that $x_0$ and $w$ are colored differently. If any two of $x_0, x_1,$ and $t_1$ are colored the same then we could color $t_0$ properly and color $u$ by Lemma~\ref{extending-lemma}, a contradiction.  Since $x_0, x_1,$ and $t_1$ are colored differently, if $x_0$ is not colored by $3$ then we could color $t_0$ by the same color as $x_0$ and properly color $u$, a contradiction.  So $x_0$ must be colored by $3$ and w.l.o.g. we can assume that $w$ is colored by $1$. Since $x_1$ is properly colored, it must be colored by $2$, or we could color $t_0$ by $1$ and properly color $u$, a contradiction. It follows that $t_1$ is colored by $1$.  If $t_1$ is colored properly, then we could color $t_0$ by $1$ and properly color $u$, a contradiction, so we may assume that $t_1$ is not colored properly.  Further, neither $z$ nor $z'$ (the two other neighbors of $t_1$) can be colored by $2$, or we could recolor $t_1$ properly, then color $t_0$ by $1$ and $u$ properly, a contradiction. So we color $t_1$ by $2$ and $t_0$ by $1$, and properly color $u$, a contradiction.

Now we assume that $n\geq 2$.  For all $j: 1\leq j\leq n$, properly color $x_{n-j}$ and color $t_{n-j}$ by Lemma~\ref{extending-lemma}, or properly if possible.  Then since $x_1$ was properly colored, and $t_1$ was colored after $x_1$, either $x_1$ remains properly colored, or $t_1$ has the same color as $x_1$.  Also, we know that $T_1$ must be a $(4,4,4)$-face, so by the previous case, we can extend the coloring to $T_0$ and get a coloring of $G$, a contradiction.
\end{proof}

\begin{lemma}\label{chain-to-344}
There is no $(T_0, \ldots, T_n)$-chain so that (i) $n\geq 1$ and $T_n$ is a special $(4,4,4)$-face or (ii) $n\geq 2$ and $T_n$ is a $(3,4,k)$-face or (iii) $n=1$ and $T_n$ is a $(3,4,4^-)$-face.
\end{lemma}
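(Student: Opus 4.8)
\emph{Plan.} Suppose $G$ contains a chain $(T_0,\dots,T_n)$ of one of the three forbidden types; I derive a contradiction, with a uniform strategy in all three cases. Let $u$ be the $3$-vertex of $T_0$, so the chain has interior $I=\{u,x_0,\dots,x_{n-1},t_0,\dots,t_{n-1}\}$; since $t_{n-1}\in I$ we have $G\setminus\{T_0,\dots,T_{n-1}\}=G-I$, and in $G-I$ the face $T_n$ retains only its two non-connecting vertices, one of which is $x_n$. I delete $I$ together with one or two vertices lying on or adjacent to $T_n$; by the minimality of $G$ the resulting proper subgraph has a $(1,1,0)$-coloring, which I extend back to the deleted near-$T_n$ vertices so that, regarding $T_n$ as a $(4,4^-,k)$-face with $4$-vertex the connecting vertex $t_{n-1}$, its $k$-vertex $x_n$ is properly coloured or shares its colour with the $4^-$-vertex. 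Lemma~\ref{chain-extending-lemma} then gives a $(1,1,0)$-coloring of $G$, a contradiction.

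\emph{Cases (ii) and (iii): a $3$-vertex on $T_n$.} In case (iii) the hypotheses force $T_1$ to be a $(3,4,4)$-face in which $t_0$ and the second $4^+$-vertex $x_1$ have degree $4$ and the third vertex $r$ has degree $3$, its pendant neighbour having degree $\ge4$ by Lemma~\ref{344-face}; in case (ii), $T_n$ is a $(3,4,k)$-face whose $3$-vertex $r$ has, by Lemma~\ref{333-vertices}, a pendant neighbour of degree $\ge4$. In both cases I delete $I\cup\{r\}$, take a $(1,1,0)$-coloring $\phi$ of the remainder by minimality, and restore $r$, which has only the two coloured neighbours $x_n$ and its pendant neighbour, leaving two colours free for it. If $x_n$ is already properly coloured under $\phi$, I colour $r$ differently from $x_n$; otherwise $\phi(x_n)\in\{1,2\}$ and $x_n$ has a single conflict, and I re-colour near $x_n$ --- re-colouring $x_n$ itself when it has at most two coloured neighbours, and otherwise using the spare colour at $r$ --- so as to make the hypothesis of Lemma~\ref{chain-extending-lemma} hold. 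That $n=1$ is required in (iii) and $n\ge2$ in (ii) mirrors the split in the proof of Lemma~\ref{chain-extending-lemma}: for $n\ge2$ the $(4,4,4)$-face $T_1$ buffers the extension, whereas for $n=1$ only $T_n$ a $(3,4,4^-)$-face can be handled.

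\emph{Case (i): $T_n$ a special $(4,4,4)$-face.} Let $s$ be a special $4$-vertex of $T_n$, let $w_1,w_2$ be the bad $3$-vertices adjacent to $s$ on its two pendant $3$-faces $f_1,f_2$ --- these are distinct, and $T_n,f_1,f_2$ are pairwise vertex-disjoint by the absence of $4$-cycles together with Proposition~\ref{fact}(b) --- and let $y$ be the third vertex of $T_n$. I delete $I\cup\{y,w_1,w_2\}$ and take a $(1,1,0)$-coloring $\phi$ of the remainder by minimality; in $\phi$ the vertex $s$ is isolated, since all four of its neighbours have been removed. I then restore $y$ with a colour different from those of its two surviving neighbours, re-colour the free vertex $s$ differently from $y$, and restore $w_1$ and $w_2$; each of $w_1,w_2$ has exactly three coloured neighbours, two on its own triangle and the flexible vertex $s$, and a short check shows $s$ can be chosen so that both restorations go through and $s$ remains different from $y$. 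Now $y=x_n$ is properly coloured in $G-I$, so Lemma~\ref{chain-extending-lemma} applies with $k$-vertex $y$ and $4^-$-vertex $s$.

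\emph{Main obstacle.} Everything reduces to the bounded local re-colouring near $T_n$ needed to satisfy the hypothesis of Lemma~\ref{chain-extending-lemma}. Cases (iii) and (i) are essentially mechanical once the right vertices are deleted; the genuinely delicate point is case (ii) for large $k$, where $x_n$ may already be surrounded by three differently-coloured neighbours, and one must combine the minimality of $G$ with the earlier structural lemmas (notably Lemmas~\ref{334-face}, \ref{5-vertex}, \ref{333-vertices}) to rule this out.
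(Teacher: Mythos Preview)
Your overall plan---delete the interior of the chain together with a few vertices near $T_n$, colour back, and invoke Lemma~\ref{chain-extending-lemma}---is the right one, and it matches the paper in spirit. But there is a genuine gap in your treatment of case~(ii).

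You apply Lemma~\ref{chain-extending-lemma} to the full chain $(T_0,\dots,T_n)$, which forces you to control the $k$-vertex $x_n$ of $T_n$: you need it properly coloured, or sharing a colour with $r$. For $k\ge 5$ the vertex $x_n$ has at least three coloured neighbours in $\phi$, and nothing prevents all three colours from appearing there with the $1$- and $2$-coloured neighbours already carrying a conflict. In that situation $x_n$ cannot be recoloured properly, and colouring $r$ the same as $x_n$ would give $x_n$ a second conflict, so neither alternative of Lemma~\ref{chain-extending-lemma} is attainable. Your appeal to Lemmas~\ref{334-face}, \ref{5-vertex}, \ref{333-vertices} is unfounded: those are structural statements about $G$, not about the colouring $\phi$, and they say nothing about the colours on the outside neighbours of an arbitrary $k$-vertex. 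The paper sidesteps this entirely by shifting one step: it colours the $3$-vertex of $T_n$ properly, then colours $x_{n-1}$ properly and $t_{n-1}$ via Lemma~\ref{extending-lemma}, and applies Lemma~\ref{chain-extending-lemma} to the shorter chain $(T_0,\dots,T_{n-1})$, where the role of the ``$k$-vertex'' is played by the freshly and properly coloured $x_{n-1}$. That shift is the missing idea; this is also why $n\ge 2$ is needed in~(ii).

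Two smaller points. In case~(i) your ``short check'' is not short: once $w_1$ is coloured, $s$ may acquire a conflict, which constrains the colouring of $w_2$; the analysis is doable but takes several sub-cases. The paper avoids this by also deleting the special vertex $s$ and colouring it last via Lemma~\ref{extending-lemma}(i), after $x_n,w_1,w_2$ have all been coloured properly---cleaner, and it directly yields the hypothesis of Lemma~\ref{chain-extending-lemma}. In case~(iii) the paper does not use Lemma~\ref{chain-extending-lemma} at all: it deletes all five vertices $\{u,x_0,t_0,x_1,v\}$, colours $x_0,u,x_1,v$ properly, and then $t_0$ has four properly coloured neighbours, so Lemma~\ref{extending-lemma}(ii) finishes. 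Your route through Lemma~\ref{chain-extending-lemma} does work here since $x_1$ has only two outside neighbours, but the direct argument is simpler.
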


\begin{proof}
Let $T_0=ux_0t_0$ be a $(3,4,4)$-face with $d(u)=3$.

(i) %Assume there is a chain of triangles $T_0, T_1, \cdots, T_n$ from $T_0$ to $T_n$, where $n\geq 1$ and $T_n$ is a special $(4,4,4)$-face.
Let $v$ be a special $4$-vertex of $T_n$ and let $y$ and $z$ be the neighbors of $v$ other than $t_n$ and $x_n$.  Let $S=\{t_i, x_i: 0\leq i\leq n-1\}$. By the minimality of $G$, $G\setminus (S\cup \{u,v,x_n,y,z\})$ has a $(1,1,0)$-coloring.  Properly color $x_n$, $y$ and $z$, then by Lemma~\ref{extending-lemma} color $v$.  Then, either $x_n$ remains properly colored or $v$ shares the same color, so by Lemma~\ref{chain-extending-lemma} we can extend the coloring to $\{T_0, T_1, \cdots, T_{n-1}\}$ to obtain a coloring of $G$.

(ii) %Assume there is a chain of triangles $T_0, T_1, \cdots, T_n$ from $T_0$ to $T_n$, where $n\geq 2$ and $T_n$ is a $(3,4,k)$-face.
Let $v$ be the $3$-vertex of $T_n$ and let $S=\{t_i, x_i: 0\leq i\leq n-1\}$.  By the minimality of $G$, $G\setminus(S\cup \{u,v\})$ has a $(1,1,0)$ coloring. Properly color $v$ and $x_{n-1}$.  Then by Lemma~\ref{extending-lemma}, we can color $t_{n-1}$.  Either $x_{n-1}$ remains properly colored or $t_{n-1}$ shares the same color, so by Lemma~\ref{chain-extending-lemma} we can extend the coloring to $\{T_0, T_1, \cdots, T_{n-2}\}$ to obtain a coloring of $G$.

(iii) Assume that $n=1$ and $T_n$ is a $(3,4,4)$-face with $3$-vertex $v$.  By the minimality of $G$, $G\setminus\{t_0, u, v, x_0, x_1\}$ has a $(1,1,0)$-coloring.  Properly color $x_0$ and $u$ in that order and properly color $x_1$ and $v$ in that order.  Then $t_0$ has four neighbors colored, all properly, so by Lemma~\ref{extending-lemma} we can color $t_0$ to get a coloring for $G$.
\end{proof}

\textbf{Remark:} By above lemma, a $(T_0, T_1)$-chain with $T_1$ being a $(3,4,5^+)$-face is not necessarily reducible. Let a \emph{bad $(3,4,5^+)$-face} be a $(3,4,5^+)$-face that shares a $4$-vertex with a $(3,4,4)$-face.

\begin{lemma}\label{chain-to-itself}
There is no $(T_0, \ldots, T_n)$-chain with $T_i=T_n$ for some $i\not=n$.
%In any chain of triangles from a $(3,4,4)$-face $T_0$ to $T_n$ in $G$,  $T_i\neq T_n$ if $i\not=n$.
\end{lemma}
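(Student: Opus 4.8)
The plan is to show that no such chain can exist at all. Suppose, for contradiction, that some $(T_0,\dots,T_n)$-chain has $T_i=T_n$ with $i\neq n$; then some triangle is repeated in the chain. Among all chains containing a repeated triangle choose one, together with indices $a<b$ satisfying $T_a=T_b$, for which $b-a$ is minimum and, subject to that, $a$ is minimum; write $m=b-a$. By minimality of $b-a$ the triangles $T_a,T_{a+1},\dots,T_{b-1}$ are pairwise distinct, and since a $T_{j+1}$ equal to $T_j$ could be deleted to give a shorter chain we may assume consecutive triangles are always distinct, so $m\geq 2$. Because $T_b=T_a$, the triangle indices on $\{a,a+1,\dots,b\}$ may be read cyclically modulo $m$; let $c_0,c_1,\dots,c_{m-1}$ be the connecting vertices, $c_k$ the $4$-vertex shared by $T_{a+k}$ and $T_{a+k+1}$.

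I would first check that the $c_k$ are pairwise distinct: if $c_k=c_l$ with $k\neq l$, this vertex, being a $4$-vertex, lies on at least three of the pairwise distinct triangles among $T_a,\dots,T_{b-1}$ (for the near-coincidence $l=k+1$ one gets the three triangles $T_{a+k},T_{a+k+1},T_{a+k+2}$, distinct once $m\geq 3$), contradicting Proposition~\ref{fact}(b). If $m=2$ this already finishes: then $T_a$ and $T_{a+1}$ share the two distinct vertices $c_0,c_1$, hence the edge $c_0c_1$, and their remaining vertices $p\neq q$ are both adjacent to $c_0$ and to $c_1$, so $c_0\,p\,c_1\,q\,c_0$ is a $4$-cycle in $G$ --- a contradiction. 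Hence we may assume $m\geq 3$; note in passing that now $c_0c_1\cdots c_{m-1}c_0$ is a genuine $m$-cycle (consecutive $c$'s lie on a common triangle), so already $m\neq 4,5$.

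It remains to rule out $m\geq 3$, which I would do using the repetition $T_b=T_a$ directly rather than any cycle length. The triangle $T_a$ is joined, inside the chain, to $T_{a-1}$ (when $a\geq 1$) at $t_{a-1}$, to $T_{a+1}$ at $c_0$, and --- since $T_b=T_a$ --- to $T_{b-1}$ at $c_{m-1}$; all three of these connecting vertices lie on $T_a$, and they are pairwise distinct: $t_{a-1}\neq c_0$, since otherwise this $4$-vertex would lie on the three distinct triangles $T_{a-1},T_a,T_{a+1}$ (with $T_{a-1}\neq T_{a+1}$ because an equality would be a repeated pair of gap $2<m$); $c_0\neq c_{m-1}$ was shown above; and $t_{a-1}\neq c_{m-1}$, since otherwise this $4$-vertex would lie on the three distinct triangles $T_{a-1},T_a,T_{b-1}$, where $T_{a-1}\neq T_{b-1}$ is forced by the minimality of $a$ --- each coincidence again contradicting Proposition~\ref{fact}(b). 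Thus all three vertices of the triangle $T_a$ are connecting vertices of the chain, so $T_a$ has no non-connecting $4^+$-vertex; this contradicts the fact that a chain assigns to $T_a$ a non-connecting $4^+$-vertex $x_a$ (and when $a=0$ the analogous conclusion is that the two distinct connecting vertices $c_0,c_{m-1}$ are the two $4$-vertices of the $(3,4,4)$-face $T_0$, so its only non-connecting vertex is its $3$-vertex, again not a $4^+$-vertex). The main obstacle is precisely the careful elimination of the degenerate coincidences in the last two paragraphs --- proving that the relevant connecting $4$-vertices are genuinely distinct --- which is where Proposition~\ref{fact}(b), the absence of $4$- and $5$-cycles, and the minimality of the chosen repeated pair all get used; the overall skeleton is short.
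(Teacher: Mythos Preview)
Your argument is purely structural --- you never touch a coloring --- whereas the paper's proof is a genuine coloring extension argument using Lemmas~\ref{extending-lemma} and~\ref{chain-extending-lemma}. Unfortunately the structural route cannot work, and your proposal has a real gap at the final step.

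The definition of a $(T_0,\ldots,T_n)$-chain in the paper consists only of conditions (i) and (ii): the degree types of the triangles and the sharing of $4$-vertices $t_i$. The sentence ``let $x_i\in T_i$ be a non-connecting $4^+$-vertex'' is introduced afterwards as notation for the generic situation, not as an additional axiom. The paper makes this explicit in its own proof of this very lemma: it writes ``Since $T_i=T_n$, the vertex that would have been labelled $x_i$ is instead labelled $t_{n-1}$,'' and then proceeds to extend a partial coloring to all of $G$. So when you conclude ``$T_a$ has no non-connecting $4^+$-vertex; this contradicts the fact that a chain assigns to $T_a$ a non-connecting $4^+$-vertex $x_a$,'' you are invoking a definitional requirement that simply isn't there. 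In fact one can build planar graphs without $4$- and $5$-cycles containing such cyclic chains (take a $6$-cycle $c_0\cdots c_5$, attach a triangle on each edge, and pad the outer vertices to the required degrees); every hypothesis you actually use --- no $4$- or $5$-cycles, Proposition~\ref{fact}(b) --- holds there, yet the chain $(T_0,\ldots,T_5,T_0)$ exists. The lemma is really a statement about the \emph{minimum counterexample} $G$, and the coloring argument is essential.

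There is also a concrete error in your $m=2$ case. In a graph without $4$-cycles, two distinct $3$-faces can share at most one vertex (sharing an edge $c_0c_1$ would give the $4$-cycle $c_0\,p\,c_1\,q$ you describe). Hence when $m=2$ the vertices $c_0=t_a$ and $c_1=t_{a+1}$, both lying in $T_a\cap T_{a+1}$, must coincide --- so your premise ``$T_a$ and $T_{a+1}$ share the two distinct vertices $c_0,c_1$'' is false, and the $4$-cycle argument never gets off the ground.
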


\begin{center}\begin{figure}[ht]
\includegraphics[scale=1]{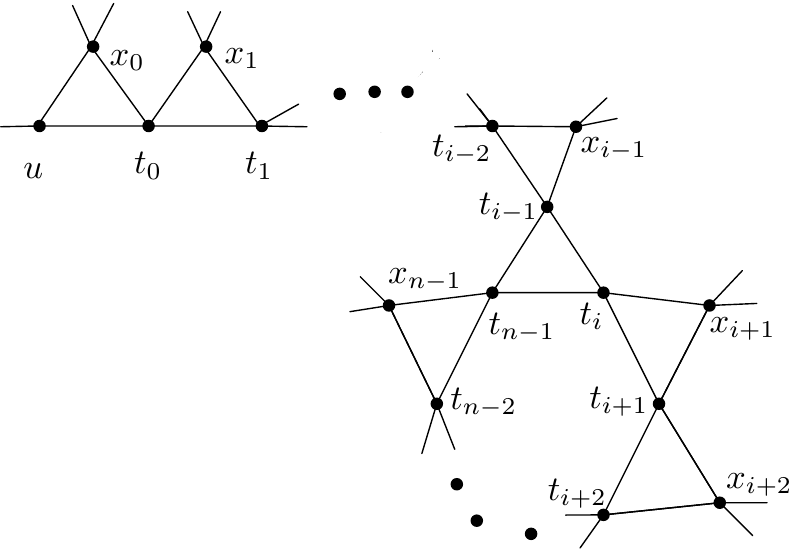}
\caption{Figure for Lemma~\ref{chain-to-itself}}
\label{fig3}
\end{figure}\end{center}

\begin{proof}
Let $(T_0, \ldots,T_n)$-chain be a chain with $T_i=T_n$ for some $i<n$. Let $u$ be the $3$-vertex of $T_0$ and let $S=\{t_j,x_j: 0\leq j\leq n-1\}$.  Since $T_i=T_n$, the vertex that would have been labelled $x_i$ is instead labelled $t_{n-1}$ (See Figure~\ref{fig3}).  By the minimality of $G$, $G\setminus (S\cup\{u\})$ is $(1,1,0)$-colorable. Start by properly coloring $x_{i+1}$, $x_{i+2}$, and $t_{i+1}$.  Then for all $j: i+2\leq j\leq n-2$, properly color $x_{j+1}$ and color $t_j$ by Lemma~\ref{extending-lemma}.  Next, properly color $t_{n-1}$, and we have two cases:

\textbf{Case 1:} $i=0$. We can properly color $u$, then color $t_i$ by Lemma~\ref{extending-lemma} to get a coloring of $G$, a contradiction.

\textbf{Case 2:} $i>0$. We can then color $t_i$ by Lemma~\ref{extending-lemma} and then either $t_{n-1}$ is properly colored, or $t_{i}$ shares the same color, so by Lemma~\ref{chain-extending-lemma} we can extend the coloring to $\{T_0, T_1, \cdots, T_{i-1}\}$ to obtain a coloring of $G$, a contradiction.
\end{proof}

\begin{lemma}\label{existence-chain}
 For each $(3,4,4)$-face $T_0$ without good $4$-vertices, there exist two chains,  $(T_0, \ldots, T_n)$-chain and $(T_0, \ldots, T_m')$-chain, such that $T_n$ and $T_m'$ are either bad $(3,4,5^+)$-faces, $(4,4^+,5^+)$-faces, or $(4,4,4)$-faces with a good $4$-vertex.   Furthermore, $T_n\not=T_m'$. 
 \end{lemma}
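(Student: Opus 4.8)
The plan is to build two chains greedily out of the two $4$-vertices of $T_0$, show each must halt at one of the three listed face types, and then exclude $T_n = T_m'$ by a splicing argument against Lemma~\ref{chain-to-itself}. Write $T_0 = uvw$ with $d(u) = 3$, $d(v) = d(w) = 4$. I would first observe that each of $v,w$ lies on a second $3$-face: neither is a good $4$-vertex by hypothesis, neither is a special $4$-vertex since $G$ has no special $(3,4,4)$-face, and a $4$-vertex lies on at most two $3$-faces by Proposition~\ref{fact}(b), so a $4$-vertex that is neither good nor special lies on exactly two $3$-faces. This gives a second $3$-face $T_1$ at $v = t_0$ (with $x_0 = w$) and $T_1'$ at $w = t_0'$ (with $x_0' = v$); since $G$ has no $4$-cycle, $T_0$ shares no edge with another triangle, so $T_1$ misses $w$ and $T_1'$ misses $v$, whence $T_1 \neq T_1'$.

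The core is the greedy extension. Given a partial chain $(T_0, \dots, T_i)$ with $i \geq 1$, let $t_{i-1}\in T_i$ be the $4$-vertex shared with $T_{i-1}$ and $p,q$ the other two vertices of $T_i$. If $T_i$ has a $3$-vertex, then by Proposition~\ref{fact}(a) and Lemma~\ref{334-face} it is a $(3,4,\ell)$-face with $\ell \geq 4$, and applying Lemma~\ref{chain-to-344}(ii)--(iii) to $(T_0,\dots,T_i)$ forces $\ell \geq 5$ and $i = 1$, so $T_1$ is a $(3,4,5^+)$-face sharing the $4$-vertex $v$ with the $(3,4,4)$-face $T_0$, i.e., a bad $(3,4,5^+)$-face, and the chain halts. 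Otherwise every vertex of $T_i$ has degree $\geq 4$: if one of $p,q$ has degree $\geq 5$ the chain halts at the $(4,4^+,5^+)$-face $T_i$; if $d(p)=d(q)=4$ and one of them is a good $4$-vertex the chain halts at the $(4,4,4)$-face $T_i$ with a good $4$-vertex. The remaining case is $T_i$ a $(4,4,4)$-face with $p,q$ both non-good; then $T_i$ cannot be a special $(4,4,4)$-face (that would make $(T_0,\dots,T_i)$ contradict Lemma~\ref{chain-to-344}(i)), so $p,q$ are non-good and non-special, hence by Proposition~\ref{fact}(b) each lies on a second $3$-face, and I take $t_i = p$, $x_i = q$ and push the chain into the second $3$-face at $p$. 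The process never revisits a triangle, since a revisit produces a chain contradicting Lemma~\ref{chain-to-itself}, so by finiteness of $G$ it halts, necessarily at one of the three advertised faces. Running the same procedure from $w$ gives $(T_0,\dots,T_m')$.

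For $T_n \neq T_m'$ I would argue by contradiction. Given any coincidence $S := T_i = T_j'$ of the two chains ($i,j\geq 1$), the entering vertices $t_{i-1}$ and $t_{j-1}'$ must be distinct $4$-vertices of $S$ — if equal, that vertex would lie on the three distinct $3$-faces $T_{i-1}, S, T_{j-1}'$, contradicting Proposition~\ref{fact}(b) — so $S$ has a third vertex $\gamma$. I then splice: $(T_0,\dots,T_{i-1}, S, T_{j-1}',\dots,T_1',T_0)$ is a closed walk in which consecutive triangles share $4$-vertices, its two end copies are the $(3,4,4)$-face $T_0$, and its interior triangles are all $(4,4,4)$-faces — the $T_k$'s and $T_k'$'s since they are interior to the original chains, and $S$ since it has two $4$-vertices and $d(\gamma)=3$ is impossible ($S$ would then be a $(3,4,4)$-face, excluded both as a chain interior and, by Lemma~\ref{chain-to-344}, as a chain endpoint). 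So unless $d(\gamma)\geq 5$ this walk is a chain revisiting $T_0$, contradicting Lemma~\ref{chain-to-itself}. If $d(\gamma)\geq 5$ the coincidence must be the terminal one, $S = T_n = T_m'$ a $(4,4,5^+)$-face entered through its two distinct $4$-vertices $\alpha,\beta$ (and one checks $T_{n-1}\neq T_{m-1}'$, else $\alpha,\beta$ lie on both $S$ and $T_{n-1}$, giving a $4$-cycle). Here I reroute: $T_{n-1}$ is a non-good non-special $(4,4,4)$-face, so its non-shared vertex other than $\alpha$ lies on a second $3$-face, which lets me re-extend chain $1$ out of $T_{n-1}$ avoiding $S$; if the re-extended chain again ends at $S$ it must enter through $\beta$ (entering through $\alpha$ would force a revisit of $T_{n-1}$), so it runs through $T_{m-1}'$, producing a coincidence at the $(4,4,4)$-face $T_{m-1}'$, to which the splicing argument above applies. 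Finiteness of $G$ then yields a pair with $T_n\neq T_m'$.

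I expect the main obstacle to be, on the one hand, the bookkeeping in the greedy extension — ensuring the chain can stop only on exactly the three listed face types, which forces one to track carefully the difference between a good $4$-vertex, a special $4$-vertex, and a $4$-vertex on two $3$-faces, and to use Lemmas~\ref{334-face}, \ref{chain-to-344} and the no-special-$(3,4,4)$-face lemma in concert — and, on the other, the final subcase of the splicing argument, where the common endpoint is a $(4,4,5^+)$-face entered by the two chains through different $4$-vertices; this is the one situation the clean "splice and invoke Lemma~\ref{chain-to-itself}" move does not settle directly, and it is where the rerouting is needed.
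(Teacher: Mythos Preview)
Your construction of the two chains and the case analysis for where they halt matches the paper's argument closely. Where you diverge is in proving $T_n \neq T_m'$: the paper handles this by a direct coloring argument --- it assumes $T_n = T_m'$, observes that this common endpoint must be a $(4,4,5^+)$-face, notes that the absence of $4$- and $5$-cycles forces $n+m \geq 6$, then decolors $S \cup S' \cup \{u\}$ and extends the coloring back via Lemmas~\ref{extending-lemma} and~\ref{chain-extending-lemma}, splitting on $n = 1$ versus $n \geq 2$. Your approach instead stays purely combinatorial: splice the two chains into a closed walk and invoke Lemma~\ref{chain-to-itself}, with a rerouting manoeuvre when the common endpoint is a $(4,4,5^+)$-face entered through its two distinct $4$-vertices. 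This is a legitimate alternative and arguably cleaner conceptually, since it avoids revisiting the coloring machinery; the paper's argument, on the other hand, is shorter once Lemma~\ref{chain-extending-lemma} is already in hand.

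Two places in your write-up need tightening. First, the claim that $t_{i-1} \neq t_{j-1}'$ because otherwise ``that vertex would lie on the three distinct $3$-faces $T_{i-1}, S, T_{j-1}'$'' is not quite right as stated: a $4$-vertex lies on at most two $3$-faces, so equality of the entering vertices forces $T_{i-1} = T_{j-1}'$ rather than an immediate contradiction. You should take the \emph{minimal} coincidence $T_i = T_j'$ with $i,j \geq 1$; then either $(i,j)=(1,1)$ with $t_0 = v \neq w = t_0'$, or $T_{i-1} \neq T_{j-1}'$ by minimality, and in both cases your three-distinct-faces argument goes through. Second, your rerouting step assumes $T_{n-1}$ is a $(4,4,4)$-face, which fails when $n = 1$; in that case $m \geq 2$ (since $T_1 \neq T_1'$), and you should reroute chain $2$ from $T_{m-1}'$ instead. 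With these two fixes your argument is complete, and the final appeal to ``finiteness of $G$'' is unnecessary: a single reroute already forces a coincidence at a $(4,4,4)$-face, to which the splicing-plus-Lemma~\ref{chain-to-itself} contradiction applies directly.
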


\begin{proof}
As $G$ is finite, any chain of triangles in $G$ must be finite.  By Lemma~\ref{chain-to-344} and ~\ref{chain-to-itself}, no chain of triangles in $G$ can end with a special $3$-face or a non-bad $(3,4,5^+)$-face, thus it must end with a bad $(3,4,5^+)$-face or a $(4,4^+,4^+)$-face.  Since a $(4,4,4)$-face in a chain can not be a special $3$-face, any chain of triangles in $G$ must end with a bad $(3,4,5^+)$-face, a $(4,4^+,5^+)$-face or a $(4,4,4)$-face with a good $4$-vertex.

Now we assume that $T_n = T_m$.  Then by Lemma~\ref{chain-extending-lemma}, $T_n$ must be a $(4,4,5^+)$-face, and since $G$ has no 4- and 5-cycles, $n+m\geq 6$. Assume that $n\leq m$.  Let $S=\{t_i, x_i: 1\leq i\leq n-1\}$, where $S=\emptyset$ if $n=1$, and $S'=\{t_j', x_j': 0\leq j\leq m-1\}$ and let $u$ be the $3$-vertex of $T_0$.  By the minimality of $G$, $G$\textbackslash $S\cup S'\cup\{u\}$ has a $(1,1,0)$-coloring. We have two cases:

If $n=1$, properly color $x_{m-1}'$ and $t_{m-1}'$. Then, by Lemma~\ref{chain-extending-lemma} we can extend the coloring to $\{T_0, T_1', \cdots T_{m-2}'\}$ to obtain a coloring of $G$, a contradiction.

If $n\geq 2$, then properly color $x_{n-1}$, $t_{n-1}$ and $x_{m-1}'$ in that order, then by Lemma~\ref{extending-lemma} we can color $t_{m-1}'$. If $n\geq 3$, for all $i: 2\leq i\leq n-1$, properly color $x_{n-i}$ and by Lemma~\ref{extending-lemma} we can color $t_{n-1}$.  Then since either $x_{m-1}'$ is still properly colored or shares the same color as $t_{m-1}'$, by Lemma~\ref{chain-extending-lemma} we can extend the coloring to $\{T_0, T_1', \cdots, T_{m-2}'\}$ to obtain a coloring of $G$, a contradiction.
\end{proof}

 We will now prove some lemmas which will ensure that bad $(3,4,5^+)$-faces will have extra charge to help $(3,4,4)$-faces.

\begin{lemma}\label{bad-345s}
A $5$-vertex incident to a bad $(3,4,5)$-face cannot be incident to another bad $(3,4,5)$-face or a $(3,3,5)$-face.
\end{lemma}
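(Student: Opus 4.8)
The plan is to argue by contradiction from the minimality of $G$, in the same spirit as Lemmas~\ref{5-vertex} and~\ref{344-face}. Suppose $v$ is a $5$-vertex lying on a bad $(3,4,5)$-face $T_1=va_1b_1$ with $d(a_1)=3$ and $d(b_1)=4$, and on a second face $T_2$ that is either a bad $(3,4,5)$-face or a $(3,3,5)$-face. By Proposition~\ref{fact}(b) a $5$-vertex has at most $\lfloor 5/2\rfloor=2$ incident $3$-faces, so $T_1$ and $T_2$ are the only $3$-faces at $v$. In the generic situation $a_1$ together with the one or two $3$-vertices of $T_2$ are three (resp.\ two) distinct $3$-vertices adjacent to $v$; I would first dispose of the degenerate possibilities (some of these vertices coincide, or $T_1$ and $T_2$ share an edge through $v$) by short direct checks, using Lemma~\ref{333-vertices} to kill the cases that force a $3$-vertex adjacent to two $3$-vertices and Lemma~\ref{extending-lemma} for the rest. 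Having recorded, in addition, that badness of $T_1$ (and of $T_2$ in the first case) supplies a further $3$-vertex on the witnessing $(3,4,4)$-face near $b_1$ (resp.\ $b_2$), I pick a deletion set $H$ consisting of the $3$-vertices on $T_1$ and $T_2$, possibly $v$ itself, and if needed one of these extra $3$-vertices.

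Next I delete $H$; by minimality $G\setminus H$ has a $(1,1,0)$-coloring. The extension proceeds in the order: colour $v$ first (having lost two or three of its five neighbours to $H$, it now sees at most three coloured neighbours, and is colourable, after a recolouring step of the type used in Lemma~\ref{5-vertex} if necessary, and choosing its colour so as not to obstruct the later steps), then colour the deleted $3$-vertices one at a time. A deleted $3$-vertex $a$ with all three neighbours coloured is colourable by Lemma~\ref{extending-lemma}(i) unless its three neighbours carry the three distinct colours $1,2,3$ with the colour-$1$ and colour-$2$ neighbours both improperly coloured; the whole point is to rule out this forbidden pattern. The lever is the common neighbour $v$: whatever colour $v$ carries is one of the three colours forced at $a$, and since $v$ was coloured by us (or can be recoloured among the colours not used improperly at its coloured neighbours, exactly as in the proofs of Lemmas~\ref{5-vertex} and~\ref{344-face}), we can shift $v$ to vacate colour $1$ or $2$ for $a$. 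When $T_2$ is a $(3,3,5)$-face its two $3$-vertices $a_2,a_2'$ are adjacent, so they must be coloured "outer first, inner second," and I would use the standard recolouring trick (recolour a properly coloured neighbour to free colour $3$, then colour the inner vertex) to complete the extension.

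The main obstacle is precisely this bookkeeping: after colouring $v$ one must ensure that each of the two or three deleted $3$-vertices simultaneously avoids the "$1,2,3$ with both the $1$- and $2$-neighbour improper" pattern, and the recolourings that fix one of them must not spoil another. I expect to control this by exploiting that all of these $3$-vertices meet at $v$, so that a single well-chosen colour for $v$—or a single recolouring of one coloured neighbour of $v$—serves all of them at once; and, where a colour count is still tight, by also deleting the extra $3$-vertex coming from the $(3,4,4)$-face that makes $T_1$ (or $T_2$) bad, which buys one more degree of freedom. All remaining degenerate adjacency patterns should each fall to a one-line application of Lemma~\ref{extending-lemma} or Lemma~\ref{333-vertices}.
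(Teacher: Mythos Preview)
Your plan differs substantially from the paper's proof and, as written, has a genuine gap at exactly the point you flag as ``the main obstacle.'' The paper does not work with a small deletion set: it removes \emph{all} vertices on the four triangles $f_1,f_2,f_3,f_4$ (the two bad $(3,4,5)$-faces together with the $(3,4,4)$-faces that witness their badness), then splits on the colour of the fifth neighbour $u$ of $v$. When $u\in\{1,2\}$ it colours $v$ by $3$, colours the $3$-vertices of $f_1,f_2$ properly (so $v$ stays proper), and invokes Lemma~\ref{chain-extending-lemma} twice to push the colouring through to $f_3$ and $f_4$. When $u=3$ it colours the $(3,4,4)$-faces first and argues directly on the colours of the shared $4$-vertices $z,z'$. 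The chain-extending lemma is the workhorse; badness is used not by borrowing a single extra $3$-vertex but by absorbing the entire neighbouring $(3,4,4)$-face into the deletion set so that Lemma~\ref{chain-extending-lemma} applies.

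In your sketch, after colouring $v$ you must colour two or three deleted $3$-vertices, all sharing $v$; colouring the first may render $v$ improper, and then the second can hit the forbidden $1,2,3$ pattern with no room left. You assert that ``a single well-chosen colour for $v$'' handles all of them simultaneously, but $v$ may have no real choice: with $b_1,b_2,u$ carrying three distinct colours and only $b_1,b_2$ recolourable (each to a value determined by its two outside neighbours), the colour of $v$ can be forced. Adding the $3$-vertex $y_1$ from one witnessing $(3,4,4)$-face frees $b_1$ to two options, but you then owe a colouring of $y_1$ as well, and nowhere do you show the accounts balance. Since badness is the \emph{only} hypothesis separating this lemma from Lemma~\ref{5-vertex} (which needs the fifth neighbour to be a $3$-vertex), it must do real work; your outline invokes it only as an afterthought, whereas the paper exploits the full $(3,4,4)$-faces through Lemma~\ref{chain-extending-lemma}. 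If you want to salvage the small-deletion approach you will need to carry out the case analysis in full rather than declare that it ``should'' go through.
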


\begin{center}
\begin{figure}[ht]
\includegraphics[scale=0.7]{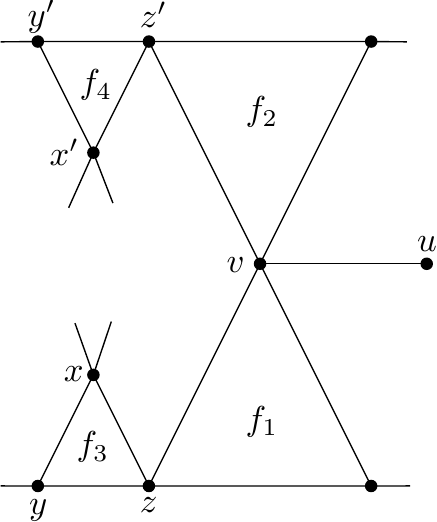}
\caption{Figure for Lemma~\ref{bad-345s}}
\label{figure4}
\end{figure}\end{center}

\begin{proof}
We only show the case when a $5$-vertex $v$ is incident to two bad $(3,4,5)$-faces, and it is very similar (and easier!) to show the case when it is incident to a bad $(3,4,5)$-face and a $(3,3,5)$-face.

Let $v$ be a $5$-vertex that is incident two bad $(3,4,5)$-faces, $f_1$ and $f_2$, and let $u$ be a $k$-vertex adjacent $v$ (see Figure~\ref{figure4}).  Let $f_3$ be the $(3,4,4)$-face sharing a $4$-vertex with $f_1$ and let $f_4$ be the $(3,4,4)$-face sharing a $4$-vertex with $f_2$.  Let $f_3$ and $f_4$ have outer $4$-vertices of $x$ and $x'$ respectively and $3$-vertices of $y$ and $y'$ respectively.  Also, let $f_1$ and $f_2$ have $4$-vertices $z$ and $z'$.  Then, by the minimality of $G$, $G$\textbackslash $\{f_1, f_2, f_3, f_4\}$ has a $(1,1,0)$-coloring.

If $u$ is colored by $1$ or $2$, then we can color $v$ by $3$ and color the $3$-vertices of $f_1$ and $f_2$ properly.  Since $v$ is properly colored, by Lemma~\ref{chain-extending-lemma} we can extend the coloring to $f_1$ and $f_3$.  Then, since $v$ is colored by $3$, it would remain properly colored, so again by Lemma~\ref{chain-extending-lemma} we can extend the coloring to $f_2$ and $f_4$ to get a coloring of $G$.

If $u$ is colored by $3$, then we properly color $x$ and $x'$ then properly color $y$ and $y'$.  We then properly color $z$ and $z'$.  If either $z$ or $z'$ is colored by $3$, then we can properly color the $3$-vertices of $f_1$ and $f_2$ and color $v$ by either $1$ or $2$ getting a coloring for $G$.  So we can assume neither is colored by $3$, and w.l.o.g. we can assume that $z$ is colored by $1$.  Then since $z$ and $z'$ are properly colored, we can color the $3$-vertices of $f_1$ and $f_2$ by either $1$ or $3$.  Then since $v$ will have at most one neighbor colored by $2$, and that neighbor colored properly, we can color $v$ by $2$ to obtain a coloring for $G$.
\end{proof}

\begin{lemma}\label{35k-pendant-neighbor}
A $(3,5,k)$-face in $G$ that is incident a $5$-vertex that is also incident to a bad $(3,4,5)$-face and a pendant  $(3,4^-,4^-)$-face will have a pendant neighbor that is a $4^+$-vertex.
\end{lemma}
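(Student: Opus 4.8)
Suppose the statement fails. Then $G$ has a $(3,5,k)$-face $f=vst$ with $d(v)=5$, $d(s)=3$, $d(t)=k$, whose $5$-vertex $v$ lies on a bad $(3,4,5)$-face $f_1=vpq$ (with $d(p)=3$, $d(q)=4$) and carries a pendant $(3,4^-,4^-)$-face $g=wcd$ with $w$ adjacent to $v$, $d(w)=3$ and $d(c),d(d)\le 4$, while the pendant neighbor $u$ of $s$ has $d(u)\le 3$, hence $d(u)=3$ by Proposition~\ref{fact}(a). Let $f_2=qxy$ be the $(3,4,4)$-face sharing the $4$-vertex $q$ with $f_1$ (so $d(x)=4$, $d(y)=3$), and let $p'$ be the pendant neighbor of $p$. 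Since $d(v)=5$ is exhausted by the two incident triangles $f,f_1$ together with the edge $vw$, the neighborhood of $v$ is exactly $\{s,t,p,q,w\}$; and because $G$ has no $4$-cycle, no two of the triangles $f,f_1,f_2,g$ share an edge, so all the vertices named above are pairwise distinct (any further coincidence among $t,c,d,p'$ and the two other neighbors of $u$ only removes constraints below and may be ignored).

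The plan is to build a $(1,1,0)$-coloring of $G$. Observe that $(f_2,f_1)$ is a $(T_0,T_1)$-chain, with $T_0=f_2$ a $(3,4,4)$-face and $T_1=f_1$ a $(3,4,5)$-face, that is, a $(4,4^-,k)$-face whose $4$-vertex is $q$, whose $4^-$-vertex is $p$, and whose $k$-vertex is $v$. By Lemma~\ref{chain-extending-lemma} it therefore suffices to produce a $(1,1,0)$-coloring of $G-\{q,x,y\}$ in which $v$ is properly colored, or $v$ and $p$ receive the same color. To get one, delete $R=\{s,u,p,v,w,q,x,y\}$; by minimality $G-R$ has a $(1,1,0)$-coloring $\phi$, in which the only colored neighbor of $v$ (and of $s$) is $t$, the only colored neighbor of $p$ is $p'$, while $u$ has two colored neighbors $u_1,u_2$ and $w$ has the two colored neighbors $c,d$.

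Now extend $\phi$ to $u,s,w,p$ and finally to $v$. Since a proper color for $s$ must differ from $\phi(t)$, the colors of $s,t,w$ fail to exhaust $\{1,2,3\}$ precisely when $\phi(w)$ equals the color of $t$ or the color of $s$. If the color of $t$ is not used by $c$ or $d$, give $w$ the color of $t$ and then properly color $u$ and $s$. Otherwise put $A=\{1,2,3\}\setminus\{\phi(c),\phi(d)\}\neq\emptyset$ and try to color $s$ (hence $w$) with a color of $A$: when $|A|=2$ this works for at least one color of $A$, since the requirement that $u$ avoid $\phi(u_1),\phi(u_2)$ obstructs at most one color of $A$ for $s$; when $A=\{\rho\}$ it works unless $u_1,u_2$ carry exactly the two colors different from $\rho$. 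In that last residual configuration --- where the color of $t$, the colors of $c,d$, and the colors of $u_1,u_2$ are all confined to the two-element set $\{1,2,3\}\setminus\{\rho\}$ --- one of $c,d$ carries the color of $t$; recoloring that vertex with $\rho$ is legitimate because it has degree at most $4$ (its only colored neighbors are the other of $c,d$ and at most two more), and it returns us to the first case. Once the colors of $s,t,w$ miss some color, pick the color of $p$ inside $\{\phi(s),\phi(t)\}$ --- possible since $p$ need only avoid $\phi(p')$ --- so that $s,t,p,w$ still miss a color, and give that color to $v$. Then $v$ is properly colored, so by Lemma~\ref{chain-extending-lemma} the coloring extends to all of $G$, contradicting the minimality of $G$.

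The step I expect to be the real obstacle is the residual recoloring: one must verify that in the tightly colored configuration the chosen vertex among $c,d$ can actually be recolored by $\rho$ (and, in the sub-case where $\rho=3$ would create a conflict, recolored within $\{1,2\}$) without violating the $(1,1,0)$-condition at that vertex or at its at most two remaining neighbors --- and this is exactly where the hypothesis that the pendant triangle $g$ is a $(3,4^-,4^-)$-face is used.
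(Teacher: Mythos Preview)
Your overall strategy matches the paper's: delete the same vertex set $R=\{s,u,p,v,w,q,x,y\}$, extend the coloring of $G-R$ so that $v$ ends up properly colored in $G\setminus\{q,x,y\}$, and then invoke Lemma~\ref{chain-extending-lemma} on the $(f_2,f_1)$-chain. Your case analysis up to the ``residual configuration'' is also essentially correct.

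The gap is exactly where you flag it, and it is real. The assertion that ``recoloring that vertex with $\rho$ is legitimate because it has degree at most $4$'' is not justified: if $\rho=3$ and one of $c$'s two outside neighbors is already colored $3$, the recoloring is illegal; and if $\rho\in\{1,2\}$ and one of those outside neighbors already carries $\rho$ together with another $\rho$-neighbor of its own, the recoloring again violates the $(1,1,0)$-condition. Nothing in your setup rules these situations out, so the residual step, as written, does not go through.

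The paper closes this case in two different ways depending on $\rho$. When $\rho\in\{1,2\}$ no recoloring of $c,d$ is needed at all: since $\{\phi(u_1),\phi(u_2)\}=\{1,2,3\}\setminus\{\rho\}$, the only proper color for $u$ is $\rho$, and one may then color $s$ by $\rho$ as well --- this is a legal $(1,1,0)$-coloring because $u$ is properly colored, so $s$ sharing $\rho\in\{1,2\}$ with $u$ is allowed. You overlooked this because you insisted on $s$ being \emph{properly} colored. When $\rho=3$ (so $\{\phi(c),\phi(d)\}=\{1,2\}$), the paper argues as follows: if $w$ can be recolored to $1$ or $2$ --- either directly, or after first recoloring $c$ or $d$ to $3$ --- we return to an earlier case; otherwise the outside neighbors of $c$ are colored exactly $\{1,3\}$ and those of $d$ exactly $\{2,3\}$, and in that fully constrained situation \emph{swapping} the colors of $c$ and $d$ leaves both properly colored, after which $w$ can validly take $1$ or $2$ and $v$ can be colored $3$. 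This swap trick is the missing idea needed to repair your argument.
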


\begin{center}
\begin{figure}[ht]
\includegraphics[scale=0.7]{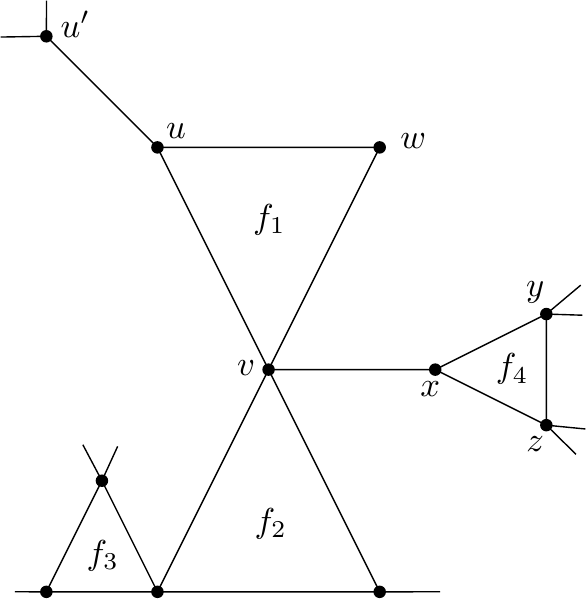}
\caption{Figure for Lemma~\ref{35k-pendant-neighbor}}
\label{figure5}
\end{figure}\end{center}

\begin{proof}
Let $f_1$ be a $(3,5,k)$-face in $G$ with a $5$-vertex $v$, a $3$-vertex $u$, and a pendant neighbor $u'$ that is a $3$-vertex.  Let the $k$-vertex of $f_1$ be $w$.  Let $v$ be incident a bad $(3,4,5)$-face $f_2$ with neighbor $(3,4,4)$-face $f_3$, and let $v$ have a pendant $(3,4,4)$-face $f_4$. Let the $3$-vertex of $f_4$ be $x$ and the $4$-vertices of $f_4$ be $y$ and $z$ (See Figure~\ref{figure5}). By the minimality of $G$, $G$\textbackslash $\{f_2, f_3, u, u', x\}$ has a $(1,1,0)$-coloring. Properly color $x$. If $w$ and $x$ share the same color, then we can properly color $u'$ and $u$, then properly color $v$ and the $3$-vertex of $f_2$.  Then the coloring can be extended to $f_3$ by Lemma~\ref{chain-extending-lemma}, obtaining a coloring of $G$. So we can assume that $w$ and $x$ are colored differently.
If $x$ is colored by $1$ or $2$ (w.l.o.g. we may assume that $x$ is colored by $1$),  then we can color $u'$ properly and color $u$ by $1$.  Then we can properly color $v$ and properly color the $3$-vertex of $f_2$.  Finally we can apply Lemma~\ref{chain-extending-lemma} to extend the coloring to $f_3$, obtaining a coloring of $G$. So we can assume that $x$ is colored by $3$.

Since $x$ is colored by $3$, we may assume that $w$ is colored by $1$.  Properly color $u'$ and color $u$ by $2$.  Since $x$ is properly colored, $y$ and $z$ must be colored by $1$ and $2$.  W.l.o.g. let $y$ be colored by $1$.  Then to avoid being able to re-color $x$ by $1$, the two other neighbors of $y$ must be colored $1$ and $3$.  For similar reasons the other two neighbors of $z$ must be colored $2$ and $3$.  Then switch the colors of $y$ and $z$ and color $x$ by $1$ or $2$ and color $v$ by $3$,  we can color the $3$-vertex of $f_2$ properly and by Lemma~\ref{chain-extending-lemma}, extend the coloring to $f_3$, obtaining a coloring of $G$.
\end{proof}

\begin{lemma}\label{355-two-bad-345}
A $(3,5,5)$-face in $G$ can not have both $5$-vertices also be incident to bad $(3,4,5)$-faces and have pendant $(3,4,4)$-faces.
\end{lemma}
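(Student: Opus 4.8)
\textbf{Proof proposal for Lemma~\ref{355-two-bad-345}.}

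The plan is to argue by contradiction in the style of the preceding lemmas: suppose $f_1 = v_1 v_2 w$ is a $(3,5,5)$-face with $d(v_1)=d(v_2)=5$ and $d(w)=3$, where each $v_i$ is incident to a bad $(3,4,5)$-face $g_i$ (which, by definition of ``bad'', shares a $4$-vertex with a $(3,4,4)$-face $h_i$) and also has a pendant $(3,4,4)$-face $p_i$. Delete from $G$ the small uncolored skeleton: the $3$-vertex $w$ of $f_1$, the $3$-vertices of $g_1,g_2$, the $3$-vertices of the pendant faces $p_1, p_2$, and the interiors of the chains $g_i h_i$ (the connecting/non-connecting $4$-vertices $t_j, x_j$ of each chain), keeping enough colored vertices around each deleted vertex that Lemma~\ref{extending-lemma} and Lemma~\ref{chain-extending-lemma} apply. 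By minimality of $G$, the remaining graph has a $(1,1,0)$-coloring, and the goal is to extend it.

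The key steps, in order, are: (1) first color the two $3$-vertices $s_1, s_2$ of the pendant $(3,4,4)$-faces $p_1, p_2$ properly, and observe — as in the proofs of Lemma~\ref{344-face} and Lemma~\ref{35k-pendant-neighbor} — that either we can immediately propagate (color the corresponding $3$-vertex of $f_1$'s pendant structure and finish) or we are forced into the rigid situation where each $s_i$ is colored $3$ and the two $4$-vertices of $p_i$ carry colors $1$ and $2$ with their outside neighbors locked into $\{1,3\}$ and $\{2,3\}$ respectively; in that rigid case we swap the two $4$-vertices of $p_i$, which frees up a color for $w$. (2) Using the freedom gained, show that $w$ can be given a color (say $3$, or whichever is available) so that at least one of $v_1, v_2$ — say $v_1$ — ends up with enough room: properly color the $3$-vertex of $g_1$, then since $v_1$ is properly colored (or shares $w$'s color appropriately) invoke Lemma~\ref{chain-extending-lemma} to extend across the chain $g_1 h_1$. (3) Repeat the argument symmetrically at $v_2$ with $g_2 h_2$: after the first extension $v_1$'s color is fixed, so $v_2$ still sees at most the colors forced by $w$, the $3$-vertex of $g_2$, and $v_1$; a counting argument (a $5$-vertex fails to be colorable only if it sees two $1$'s, two $2$'s, and a $3$, all properly) shows that we can always pick the color of the $3$-vertex of $g_2$ and of $w$ from among $\{1,2,3\}$ to leave $v_2$ colorable, then finish the chain $g_2 h_2$ by Lemma~\ref{chain-extending-lemma}. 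This yields a $(1,1,0)$-coloring of $G$, the desired contradiction.

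The main obstacle I expect is Step (2)–(3): coordinating the color chosen for the single shared $3$-vertex $w$ so that \emph{both} $v_1$ and $v_2$ remain colorable simultaneously, while also respecting the rigidity forced by the pendant faces. Because $w$ is the only common uncolored neighbor of $v_1$ and $v_2$, its color is a single scarce resource that must be spent to help both $5$-vertices at once; the argument has to show that among the (at most a few) admissible colors for $w$ coming from the pendant-face analysis, there is one that, combined with a suitable choice of the $3$-vertex colors on $g_1$ and $g_2$, avoids the forbidden $\{1,1,2,2,3\}$ pattern at each $v_i$. I would handle this by case analysis on the color of $w$ (at most three cases) and, within each, on whether $v_1$'s two already-colored non-$w$, non-$g_1$ neighbors repeat a color — exactly the bookkeeping done in Lemma~\ref{5-vertex} and Lemma~\ref{bad-345s} — recoloring a properly-colored neighbor when needed. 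Everything else (deleting the right set, applying Lemma~\ref{extending-lemma} and Lemma~\ref{chain-extending-lemma}, the pendant-swap trick) is routine given the earlier lemmas.
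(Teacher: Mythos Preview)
Your skeleton is essentially the paper's: delete the two bad $(3,4,5)$-faces together with their neighbouring $(3,4,4)$-faces, the $3$-vertex of the $(3,5,5)$-face, and the $3$-vertices of the two pendant $(3,4,4)$-faces; colour the pendant $3$-vertices first, invoke the swap trick on a pendant $(3,4,4)$-face when needed, then push the colouring through the two chains via Lemma~\ref{chain-extending-lemma}. So the approach is right.

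Two points where your plan drifts from what actually makes the argument close. First, the correct pivot is not ``$s_i=3$'' but rather the colour of $s_i$ \emph{relative to the pendant neighbour $u'$ of the $3$-vertex $w$}. The paper's organizing move is: if either $s_1$ or $s_2$ differs from $u'$, colour $w$ with that value. This immediately gives one $5$-vertex two equally coloured neighbours ($w$ and the matching $s_i$), so both $5$-vertices can then be properly coloured in order with no further case analysis, and Lemma~\ref{chain-extending-lemma} finishes both sides. The swap trick is invoked only to escape the case $s_1=s_2=u'$ when that common colour is $3$; its purpose is precisely to produce an $s_i\neq u'$ and feed back into the matching move. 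Your version (swap whenever $s_i=3$, then do a three-way case split on $w$) can be made to work, but you should be aware that after a swap $s_i$ is \emph{no longer properly coloured} (it shares its colour with one of the swapped $4$-vertices), so you cannot let $v_i$ borrow $s_i$'s colour later; this cuts down your options in exactly the coordination step you flag as the obstacle.

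Second, in the residual case $s_1=s_2=u'\in\{1,2\}$ the paper does \emph{not} colour $v_1$ first and then invoke Lemma~\ref{chain-extending-lemma} on that side. Instead it fixes $w$ and the second $5$-vertex, extends that side by the lemma, and then for the remaining $5$-vertex colours the $(3,4,4)$-face and the connecting $4$-vertex of the bad $(3,4,5)$-face \emph{by hand} before colouring the $5$-vertex last. Colouring the $4$-vertex of $g_1$ first (rather than leaving it to the lemma) is what gives the control needed to place $v_1$; your ``colour $v_1$, then extend'' ordering may leave $v_1$ improperly coloured after the extension, which then blocks $v_2$. If you reorganize along these two lines your sketch becomes the paper's proof.
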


\begin{center}
\begin{figure}[ht]
\includegraphics[scale=0.6]{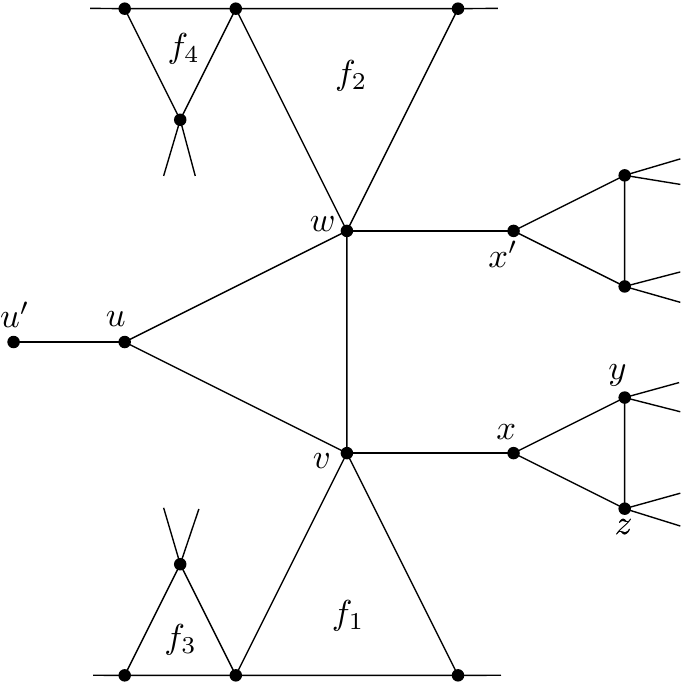}
\caption{Figure for Lemma~\ref{355-two-bad-345}}
\label{figure6}
\end{figure}\end{center}

\begin{proof}
Let $uvw$ be a $(3,5,5)$-face in $G$ where $d(v)=d(w)=5$ and $u$ has pendant neighbor $u'$.  Also let $v$ and $w$ both be incident bad $(3,4,5)$-faces, $f_1$ and $f_2$ with neighbor $(3,4,4)$-faces $f_3$ and $f_4$ respectively and let $v$ and $w$ have pendant $(3,4,4)$-faces.  Let the pendant $(3,4,4)$-faces to $v$ and $w$ have $3$-vertices $x$ and $x'$ respectively (See Figure~\ref{figure6}).  By the minimality of $G$, $G$\textbackslash $\{f_1, f_2, f_3, f_4, u, x, x'\}$ has a $(1,1,0)$-coloring.

Properly color $x$ and $x'$.  If either $x$ or $x'$ has a coloring different from $u'$, w.l.o.g. we can assume $x$, then we color $u$ the same as $x$.  We can properly color $w$ and $v$ in that order, then properly color the $3$-vertices of $f_1$ and $f_2$.  Then by Lemma~\ref{chain-extending-lemma} we can extend the coloring to $f_3$ and $f_4$ to obtain a coloring of $G$.  So we can assume that $x$, $x'$, and $u'$ are colored the same.  If $x$ is colored by $3$, since $x$ is properly colored, $y$ and $z$ must be colored by $1$ and $2$. Then to avoid being able to re-color $x$ by $1$, the other two neighbors of $y$ must be colored $1$ and $3$.  For similar reasons the other two neighbors of $z$ must be colored $2$ and $3$.  Then we can switch the colors of $y$ and $z$ and color $x$ differently from $u'$.  Then we follow the above procedure to obtain a coloring for $G$.

So we may assume that w.l.o.g. $x$, $x'$, and $u'$ are all colored by $1$. Then we color $u$ by $2$ and $w$ by $3$. Color the $3$-vertex of $f_2$ properly and by Lemma~\ref{chain-extending-lemma}, extend the coloring to $f_4$. We now have $v$ adjacent to $3$ differently and properly colored vertices.  Properly color the outer $4$-vertex and the $3$-vertex of $f_3$ in that order, then properly color the $4$-vertex of $f_1$.  If it is colored by $3$, then properly color the $3$-vertex of $f_1$ and color $v$ by either $1$ or $2$ to obtain a coloring of $G$.  If it is not colored by $3$, then w.l.o.g. we can assume that it is colored by $1$.  Then since it is properly colored, we can color the $3$-vertex of $f_1$ by either $1$ or $3$ and color $v$ by $2$, obtaining a coloring of $G$.
\end{proof}

\begin{lemma}\label{bad-345-45k-with-chain}
A $5$-vertex in $G$ that is incident a bad $(3,4,5)$-face and has a pendant $(3,4,4)$-face cannot also be incident a $(4,4^+,5)$-face $T_n$ that is in a $(T_0, \ldots, T_n)$-chain.
\end{lemma}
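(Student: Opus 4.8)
The plan is to show the configuration is reducible; since $G$ is a minimum counterexample, this is a contradiction. Suppose $G$ contains such a $5$-vertex $v$. Write $f_1=vpq$ for the bad $(3,4,5)$-face ($d(p)=3$, $d(q)=4$), $f_3=qrs$ for the $(3,4,4)$-face sharing the $4$-vertex $q$ with $f_1$ ($d(r)=3$, $d(s)=4$), $f_4$ for the pendant $(3,4,4)$-face, so that the fifth neighbor of $v$ is the bad $3$-vertex $x$ of $f_4$, whose other two neighbors $y,z$ have degree $4$; and $T_0T_1\cdots T_n$ for the chain, with $T_0=u\,t_0\,x_0$ ($d(u)=3$), each intermediate $T_i$ a $(4,4,4)$-face, and $T_n=t_{n-1}\,x_n\,v$ a $(4,4^+,5)$-face. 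Using Proposition~\ref{fact}(b) and the absence of $4$- and $5$-cycles, one checks that the five neighbors of $v$ are precisely $p,q,x,t_{n-1},x_n$, all distinct, and that, apart from the degenerate possibilities $f_3=T_0$ or $f_4=T_0$, none of $f_1,f_3,f_4$ is a triangle of the chain. Set $R=\{v,p,q,r,s,x,u\}\cup\{t_i,x_i:0\le i\le n-1\}$; by minimality $G-R$ has a $(1,1,0)$-coloring $\phi$, under which $x_n$, $y$, $z$, the pendant neighbor of $p$, and the pendant neighbor of $r$ (of degree $\ge 4$ by Lemma~\ref{344-face}) are colored.

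We extend $\phi$ to $G$ as follows. First color the bad $3$-vertex $x$: it has only the two colored neighbors $y,z$, so at least two colors are available, and, if necessary, the ``switch the colors of $y$ and $z$'' move from the proofs of Lemmas~\ref{344-face} and~\ref{35k-pendant-neighbor} frees an additional color of $\{1,2\}$ for it. Next color $v$ properly, with a color $c_v$ distinct from $\phi(x_n)$ and from the color of $x$ --- possible since these are $v$'s only colored neighbors at this stage --- and color the $3$-vertex $p$ with a color $\ne c_v$. Now extend along the chain: color $T_{n-1},\dots,T_0$ by coloring each $x_i$ properly (a $4$-vertex with at most two colored neighbors), coloring $t_{n-1}$ with a color that is neither $c_v$ nor the color of $x_{n-1}$ --- this is possible because $v$ is already a colored neighbor of $t_{n-1}$, so at most one of the two colors $\ne c_v$ can be blocked, and the freedom in the color of $x$, hence of $c_v$, is exactly what eliminates the one remaining obstruction --- coloring each $t_i$ with $i<n-1$ by Lemma~\ref{extending-lemma}(i) with the properly colored neighbors $x_i,x_{i+1}$, and finally coloring the $3$-vertex $u$, for which a color is always available. (When $d(x_n)=4$, $T_n$ is a $(4,4,5)$-face and one may instead invoke Lemma~\ref{chain-extending-lemma} directly, with $k$-vertex $v$ and $4^-$-vertex $x_n$, since $v$ has been made properly colored.) At this point every neighbor of $v$ except $q$ is colored and none has color $c_v$, so $v$ is properly colored; applying Lemma~\ref{chain-extending-lemma} to the length-$1$ chain $(f_3,f_1)$ --- in which $f_1$ is a $(4,4^-,k)$-face with $4$-vertex $q$, $4^-$-vertex $p$, and $k$-vertex $v$ --- colors $q,r,s$ and completes a $(1,1,0)$-coloring of $G$, a contradiction.

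The main obstacle is the bookkeeping concentrated in the phrases ``color $t_{n-1}$ with a color that is neither $c_v$ nor the color of $x_{n-1}$'' and ``$v$ is properly colored''. One must go through the cases determined by $\phi(x_n)$ and by the colors $\phi(y),\phi(z)$ (which govern what is available to $x$), and in each case pick the color of $x$, hence $c_v$, so that when the chain is colored backwards $t_{n-1}$ can simultaneously avoid $c_v$ and the color of $x_{n-1}$ (the latter so that $x_{n-1}$ stays properly colored for the use of Lemma~\ref{extending-lemma} at $t_{n-2}$); along the way one also checks, as in Lemma~\ref{35k-pendant-neighbor}, that the recoloring performed on $x$ leaves $y$ and $z$ valid. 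The degenerate cases $f_3=T_0$ and $f_4=T_0$ are handled separately: each only shortens one of the two chains, making $r$ respectively $u$ a neighbor of $v$, so that the same ``avoid $c_v$'' argument applies to it, and the scheme goes through unchanged.
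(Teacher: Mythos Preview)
Your plan follows the same reducibility template as the paper, but the execution has a genuine gap in the chain step, and the order you chose makes life harder than it needs to be.

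\textbf{The gap.} You claim you can ``color $t_{n-1}$ with a color that is neither $c_v$ nor the color of $x_{n-1}$,'' and then invoke Lemma~\ref{extending-lemma}(i) at each later $t_i$ ``with the properly colored neighbors $x_i,x_{i+1}$.'' Neither assertion is justified. For the first: if $\phi(x_n)=3$ then $c_v\in\{1,2\}$, and when the proper color of $x_{n-1}$ happens to be the \emph{other} element of $\{1,2\}$ there is no third color left for $t_{n-1}$ that is simultaneously valid, $\ne c_v$, and $\ne$ the color of $x_{n-1}$ (the only remaining color is $3=\phi(x_n)$). The ``freedom in $c_v$'' cannot always rescue this, because the proper color of $x_{n-1}$ is dictated by its two outside neighbors, not by anything you control. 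For the second: once $t_{i+1}$ has been colored it may share its color with $x_{i+1}$, so $x_{i+1}$ is no longer properly colored when you reach $t_i$; Lemma~\ref{extending-lemma}(i) then fails, and the same issue propagates to $u$ at the end.

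\textbf{How the paper avoids this.} The paper colors in the opposite order: first $x$, then $x_{n-1}$ and $t_{n-1}$, and only then $v$. The key move is to force $t_{n-1}$ to carry the \emph{same} color as $x$ (after a case split on the color of $x$ versus $\phi(x_n)$, using the $y\leftrightarrow z$ switch only to reduce the case $\phi(x)=3$ to the case $\phi(x)=\phi(x_n)$). This guarantees that $v$'s three colored neighbors $\{x,x_n,t_{n-1}\}$ use at most two colors, so $v$ can be colored properly; and it leaves $x_{n-1}$ either still properly colored or sharing its color with $t_{n-1}$, which is exactly the hypothesis of Lemma~\ref{chain-extending-lemma} applied to the shorter chain $(T_0,\dots,T_{n-1})$. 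No step-by-step invocation of Lemma~\ref{extending-lemma} along the chain is needed.

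Your outline is salvageable if you (i) drop the superfluous requirement $t_{n-1}\ne\text{color}(x_{n-1})$, keeping only $t_{n-1}\ne c_v$, and (ii) replace your inductive use of Lemma~\ref{extending-lemma} by a single application of Lemma~\ref{chain-extending-lemma} to $(T_0,\dots,T_{n-1})$. But then the argument collapses into essentially the paper's proof, just with $v$ colored one step earlier; the cleaner route is to color $v$ last as the paper does.
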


\begin{center}
\begin{figure}[ht]
\includegraphics[scale=0.75]{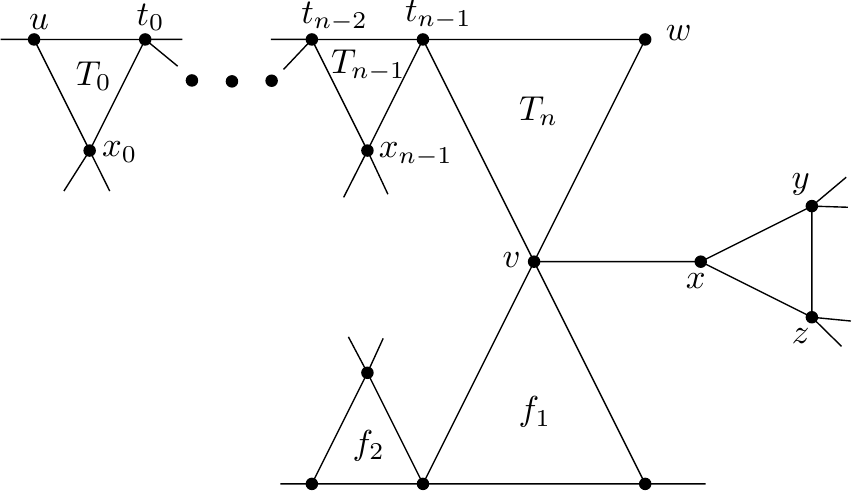}
\caption{Figure for Lemma~\ref{bad-345-45k-with-chain}}
\label{f7}
\end{figure}\end{center}

\begin{proof}
Let $v$ be a $5$-vertex in $G$ that is incident a bad $(3,4,5)$-face $f_1$ with neighbor $(3,4,4)$-face $f_2$.  Let $v$ have a pendant $(3,4,4)$-face with $3$-vertex $w$ and $4$-vertices $y$ and $z$.  Also let $v$ be incident a $(4,4^+,5)$-face $T_n$ such that there exists a chain of triangles from $T_0$ to $T_n$. Let the $4^+$-vertex of $T_n$ be $w$. Let $S=\{t_i, x_i: 0\leq i\leq n-1\}$ and let $u$ be the $3$-vertex of $T_0$ (See Figure~\ref{f7}).  By the minimality of $G$, $G\setminus(S\cup \{f_1, f_2, u, x\})$ has a $(1,1,0)$-coloring.

Properly color $x$.  If $x$ and $w$ are colored the same then we can properly color $x_{n-1}$, $t_{n-1}$, and $v$. If $n=1$, then by Lemma~\ref{extending-lemma}, we can color $u$.  If $n\geq 2$, then by Lemma~\ref{chain-extending-lemma} we can extend the coloring to $\{T_0, T_1, \cdots,T_{n-1}\}$.  Then we can properly color the $3$-vertex of $f_1$ and by Lemma~\ref{chain-extending-lemma} we can extend the coloring to $f_2$ obtaining a coloring for $G$.  So we can assume that $x$ and $w$ are colored differently.

Let $x$ be colored 1 or 2 and w.l.o.g. we can assume that $x$ is colored by $1$.  Then we can properly color $x_{n-1}$ and color $t_{n-1}$ by $1$.  Since $w$ and $x$ are colored differently, either $x_{n-1}$ and $t_{n-1}$ are both colored properly or share the same color.  If $n=1$, then either we can color $u$ properly or we can color $u$ by Lemma~\ref{extending-lemma}.  If $n\geq 2$, then by Lemma~\ref{chain-extending-lemma} we can extend the coloring to $\{T_0, T_1, \cdots, T_{n-1}\}$.  Then since $t_{n-1}$ and $x$ are colored the same we can properly color $v$ and the $3$-vertex of $f_1$.  By Lemma~\ref{chain-extending-lemma} we can extend the coloring to $f_2$ to obtain a coloring of $G$.

So let $x$ be colored by $3$ (then $w$ is colored $1$ or $2$).  Then $y$ and $z$ must be colored by $1$ and $2$, respectively. To avoid being able to re-color $x$ by $1$ or 2, the two other neighbors of $y$ must be colored $1$ and $3$ and the two other neighbors of $z$ must be colored $2$ and $3$. Then we switch the colors of $y$ and $z$ and re-color $x$ to be the same as $w$,  and proceed as above to get a coloring for $G$.
\end{proof}

\begin{lemma}\label{bad-346}
Every $6$-vertex in $G$ that is incident a bad $(3,4,6)$-face can be incident at most two $(3,4^-,6)$-faces.
\end{lemma}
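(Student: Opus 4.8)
The plan is to argue by contradiction, deleting a small patch around the offending $6$-vertex, $(1,1,0)$-colouring the rest by minimality, and reinserting the deleted vertices, with the $6$-vertex the last and only troublesome one. By Proposition~\ref{fact}(b) a $6$-vertex is incident to at most three $3$-faces, so in the situation to be excluded all three are $(3,4^-,6)$-faces $f_1=va_1b_1$, $f_2=va_2b_2$, $f_3=va_3b_3$ with $d(a_i)=3$ and $d(b_i)\le4$, while $f_1$ (say) is bad: $d(b_1)=4$ and $b_1$ lies on a $(3,4,4)$-face $f_0=ub_1c$ with $d(u)=3$, $d(c)=4$. Let $p_i$ be the pendant neighbour of $a_i$ and $w$ that of $u$. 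A short preliminary step checks that the thirteen named vertices are pairwise distinct and that $b_1$ has exactly the neighbours $v,a_1,u,c$: any coincidence would put a $3$-vertex on two $3$-faces (impossible by Proposition~\ref{fact}(b)), or make $w$ or some $p_i$ a $3$-vertex serving as the pendant neighbour of the $3$-vertex $u$ of a $(3,4,4)$-face (impossible by Lemma~\ref{344-face}), or create a $4$- or $5$-cycle through $v$.

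\textbf{Deletion and reinsertion.} Delete $D=\{v,a_1,a_2,a_3,u,b_1,c\}$ and take a $(1,1,0)$-colouring $\phi$ of $G-D$ by minimality. Reinsert $c,b_1,u,a_2,a_3,a_1$ in that order: at its turn $c$ has only two coloured neighbours, $b_1$ has only $c$, and each $a_i$ has only $b_i$ and $p_i$ coloured (its neighbour $v$ still absent), so $c,b_1,a_1,a_2,a_3$ can each be coloured \emph{properly}, with a recorded residual choice, and then $u$ has three coloured neighbours of which $b_1$ and $c$ are properly coloured, so Lemma~\ref{extending-lemma}(i) colours $u$. Only $v$ can still resist, now facing all six of its neighbours $b_1,b_2,b_3,a_1,a_2,a_3$ coloured; recall $v$ fails only against the single pattern ``at least two $1$'s, at least two $2$'s, and at least one $3$'' on these six.

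\textbf{The crux and the obstacle.} The heart of the argument is to use the freedom still available --- $b_1$ can be moved anywhere into $\{1,2\}$ since $c$ is its only coloured neighbour, and each $a_i$ can be any colour other than $\phi(b_i)$ and $\phi(p_i)$ --- to break that pattern, the only fixed data being $\phi(b_2),\phi(b_3)$ and the pendant colours. I would split on how many of $b_2,b_3$ carry colour $3$ and on whether $\phi(b_2)=\phi(b_3)$: in the main branch no $b_i$ is coloured $3$, so each $a_i$ and $b_1$ can be kept in $\{1,2\}$, leaving $v$ with no neighbour coloured $3$ and hence colour $3$; in the branches where some $a_i$ or $b_i$ is forced to $3$, that forcing removes a colour from a $p_i$ or a $b_i$ and lets one keep at most one neighbour of $v$ coloured $1$ (or at most one coloured $2$), so $v$ takes $1$ or $2$. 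Wherever an intended proper colouring above is impossible, it is because the two relevant coloured neighbours already use both of $\{1,2\}$, which only tightens the count in $v$'s favour. The genuine difficulty is precisely colouring the degree-$6$ vertex: six neighbours typically block one colour outright, so one must play the single free colour of $b_1$ against the three free colours of the $a_i$'s simultaneously --- a short case analysis in the spirit of Lemmas~\ref{bad-345s}--\ref{bad-345-45k-with-chain}. This is also where the hypothesis that $f_1$ is \emph{bad} enters essentially: it is the auxiliary $(3,4,4)$-face $f_0$ that lets us delete $u$ (hence $b_1$ and $c$) and so leave $b_1$ with a single coloured neighbour; without it the configuration need not be reducible.
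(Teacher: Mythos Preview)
Your approach differs from the paper's in two structural ways.  The paper deletes the nine vertices $\{v,a_1,b_1,a_2,b_2,a_3,b_3,u,c\}$ (in your notation), properly colours $b_2,a_2,b_3,a_3$ first, and then --- after case analysis on the colour set of those four --- colours $v$ and $a_1$ and invokes the chain-extending Lemma~\ref{chain-extending-lemma} to push the colouring through the bad face to the $(3,4,4)$-face $f_0$.  Crucially, because $a_2,a_3$ (the paper's $t_2,u_2$) are in the deleted set, the paper can \emph{recolour} them in the tight sub-cases; it does so twice.  You keep $b_2,b_3$ outside $D$ and handle $f_0$ directly, which is a legitimate alternative design, but it costs you exactly the recolouring freedom that the paper's endgame relies on.

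As written, your case analysis has a genuine gap.  Your ``other branches'' claim --- that whenever some $a_i$ is forced to colour $3$ one can keep at most one neighbour of $v$ in colour $1$ (or $2$) --- is false.  Take $\phi(c)=3$, $\phi(p_1)=3$, $\phi(b_2)=1$, $\phi(p_2)=2$, $\phi(b_3)=2$, $\phi(p_3)=1$, all properly coloured.  Then $a_2$ and $a_3$ are each forced to $3$; $b_1\in\{1,2\}$; and $a_1$, avoiding $\phi(b_1)$ and $\phi(p_1)=3$, is forced to the other element of $\{1,2\}$.  Hence for every admissible choice the neighbour multiset of $v$ is $\{1,1,2,2,3,3\}$ and $v$ is uncolourable.  (There is a second, smaller gap: since $b_2,b_3\notin D$, nothing guarantees they are properly coloured in $\phi$, so your ``single pattern'' characterisation of when $v$ fails is not justified as stated.)  The fix is either to enlarge $D$ to include $b_2,b_3$ --- this is what the paper does --- or to allow the $a_i$ the improper option $a_i=\phi(b_i)$, which your proposal explicitly excludes (``any colour other than $\phi(b_i)$ and $\phi(p_i)$'') and which in turn requires control over $u$'s colour to keep $b_1$ proper.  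Either route leads to a case analysis of roughly the same length as the paper's, not the ``short'' one you anticipate.
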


\begin{center}
\begin{figure}[ht]
\includegraphics[scale=0.75]{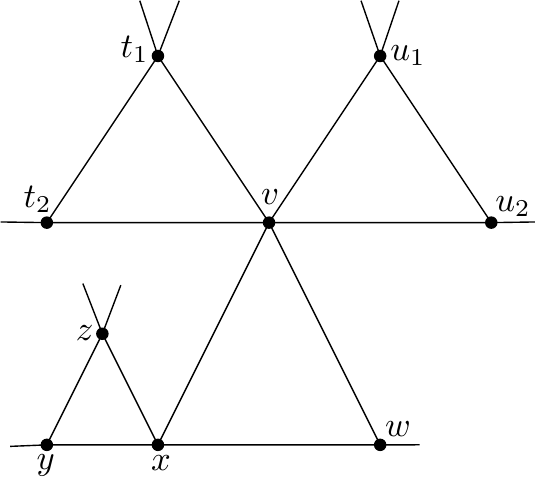}
\caption{Figure for Lemma~\ref{bad-346}}
\label{f8}
\end{figure}\end{center}

\begin{proof}
Let $v$ be a $6$-vertex in $G$. Let $vwx$ be a bad $(3,4,6)$-face with $d(w)=3$ and neighbor $(3,4,4)$-face $xyz$ with $3$-vertex $y$. Let $v$ also be incident non-bad $(3,4,6)$-faces $t_1t_2v$ and $u_1u_2v$ where $d(t_1)=d(u_1)=4$ (See Figure~\ref{f8}).  By the minimality of $G$, $G$\textbackslash $\{t_1, t_2, u_1, u_2, v, w, x, y, z\}$ has a $(1,1,0)$-coloring.  Properly color $t_1$, $t_2$, $u_1$, and $u_2$.  If the color set of $\{t_1, t_2, u_1, u_2\}$ is not $\{1,2,3\}$, then we can properly color $v$ and $w$. Then by Lemma~\ref{chain-extending-lemma}, we can extend the coloring to $x, y,$ and $z$, obtaining a coloring of $G$.  So we can assume that the color set of $\{t_1, t_2, u_1, u_2\}$ includes $1, 2,$ and $3$.

If two of $\{t_1, t_2, u_1, u_2\}$ are colored by $3$, then we can color $z$, $y$, and $x$ properly.  If $x$ is colored by $3$, then we can color $w$ properly and color $v$ by $1$ or $2$ to get a coloring of $G$. If $x$ is colored by $1$ or $2$, then since $x$ is properly colored we can color $w$ by $3$ or the same as $x$.  Then we can color $v$ differently from $3$ and $x$ to obtain a coloring of $G$.

So we can assume that exactly one of vertices in the set $\{t_1, t_2, u_1, u_2\}$ is colored by $3$.  Then w.l.o.g. we may assume that the color set of $\{t_1,t_2\}$ is $\{1,3\}$ and the color set of $\{u_1, u_2\}$ is $\{1,2\}$.  Since $u_1$ and $u_2$ were colored properly, the outside neighbor of $u_2$ must be $3$.  Let $u_1$ be colored by $1$, then since it is colored properly we can recolor $u_2$ by $1$. Then we can color $v$ and $w$ properly, and extend to $x$, $y$, and $z$ to obtain a coloring of $G$.  So we can assume that $u_1$ is colored by $2$.

Now color $z$, $y$, and $x$ properly in that order.  If $x$ is colored by $3$ then color $w$ properly.  If $w$ is colored by $1$, then color $v$ by $2$ to get a coloring for $G$.  If $w$ is colored by $2$, then since $u_1$ is colored properly recolor $u_2$ by $2$ and color $v$ by $1$ to get a coloring for $G$.  So we can assume that $x$ is colored by $1$ or $2$. Since $x$ is properly colored we can color $w$ by $3$ or the same as $x$.  Then either $1$ or $2$ but not both is in the color set of $\{x,w\}$.  If $1$ is in the color set, then $v$ will have only one neighbor colored by $2$ so we can color $v$ by $2$ and obtain a coloring of $G$.  If $2$ is in the color set, then $v$ will have two neighbors colored by $1$, but we can recolor $u_2$ by $2$ and color $v$ by $1$ to obtain a coloring of $G$.
\end{proof}

The following lemma says that a $3$-face with $k$ vertices of degree $4$ can have at most $k$ chains of triangles ending at it.

\begin{lemma}\label{at-most-two-344}
If a $(T_0, T_1, \ldots, T_n)$-chain and a $(T_0', T_1', \ldots, T_m')$-chain with $T_m'=T_n$ satisfy $T_{n-1}\cap T_n=\{t_n\}=T_{m-1}'\cap T_m'$, then $T_0=T_0'$.
%There cannot exist two chains of triangles from $T$ to $T_n$ and $T'$ to $T_n$ that share $T_i$ for $1\leq i\leq n-1$.
\end{lemma}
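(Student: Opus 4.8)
The plan is to proceed by contradiction. Suppose $T_0\neq T_0'$ and, among all pairs of chains witnessing a failure of the statement, choose one with $n+m$ minimum. Write $t_n$ for the common entry vertex, so $\{t_n\}=T_{n-1}\cap T_n=T_{m-1}'\cap T_n$ (recall $T_m'=T_n$). The first move is to use that $t_n$, being a connecting vertex of a chain, is a $4$-vertex, so by Proposition~\ref{fact}(b) it is incident to at most $\lfloor 4/2\rfloor=2$ $3$-faces. Since $T_{n-1}\neq T_n$ and $T_{m-1}'\neq T_n$ by Lemma~\ref{chain-to-itself}, the $3$-faces at $t_n$ are exactly $\{T_{n-1},T_n\}$ and also exactly $\{T_{m-1}',T_n\}$, whence $T_{n-1}=T_{m-1}'$.

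Next I would peel off the trivial case. If $n=1$, then $T_{n-1}=T_0$ is a $(3,4,4)$-face; if in addition $m\geq 2$, then $T_{m-1}'=T_{n-1}$ would be an interior triangle of the second chain, hence a $(4,4,4)$-face, impossible for a $(3,4,4)$-face. So $m=1$ too, and then $T_0=T_{m-1}'=T_0'$, contrary to assumption; thus $n,m\geq 2$. Now put $T:=T_{n-1}=T_{m-1}'$, a $(4,4,4)$-face on vertices $t_n,a,b$. The predecessors $T_{n-2}$ and $T_{m-2}'$ meet $T$ in the vertices $t_{n-2}$ and $t_{m-2}'$, both distinct from $t_n$ (again since $t_n$ lies only on the two faces $T$ and $T_n$), so $t_{n-2},t_{m-2}'\in\{a,b\}$. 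If $t_{n-2}=t_{m-2}'$, then $T_{n-2}$ and $T_{m-2}'$ are each the unique second $3$-face at that $4$-vertex, so $T_{n-2}=T_{m-2}'$, and the truncated chains $(T_0,\dots,T_{n-1})$ and $(T_0',\dots,T_{m-1}')$ — legitimate chains, since $T_{n-1}$ is a $(4,4,4)$-face and hence a permissible terminal face — again end at the same triangle through the same vertex but have smaller total length, contradicting minimality. Hence we may assume $t_{n-2}\neq t_{m-2}'$, say $t_{n-2}=a$ and $t_{m-2}'=b$.

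The remaining "branching" configuration is where the real work lies, and it is the step I expect to be the main obstacle. The idea is to splice the two chains together at $T$ into a single chain
\[
(T_0,T_1,\dots,T_{n-2},\;T,\;T_{m-2}',T_{m-3}',\dots,T_1',T_0').
\]
One then checks that consecutive triangles share a $4$-vertex — $T_{n-2}$ and $T$ share $a$, $T$ and $T_{m-2}'$ share $b$, and the other incidences are inherited from the two original chains — that $T_0$ is a $(3,4,4)$-face, that $T_0'$ is a $(3,4,4)$-face and therefore a $(3^+,4,4^+)$-face, and that every triangle strictly between them is a $(4,4,4)$-face (the interior triangles of the two chains, together with $T$ itself). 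Hence this is a genuine $(T_0,\dots,T_N)$-chain with $N=n+m-2\geq 2$ whose terminal face $T_0'$ is a $(3,4,k)$-face, contradicting Lemma~\ref{chain-to-344}(ii). This contradiction completes the argument; the only point needing care is the bookkeeping that the spliced sequence really satisfies the definition of a chain — in particular that $T$ functions as an interior $(4,4,4)$-face with $a$ and $b$ as its two connecting vertices — after which the earlier lemma does the rest.
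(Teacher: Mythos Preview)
Your proof is correct and follows the same approach as the paper: find a common $(4,4,4)$-face of the two chains and splice them there into a single chain from $T_0$ to $T_0'$, contradicting Lemma~\ref{chain-to-344}. Your argument is more careful than the paper's terse two-line sketch---you explicitly justify the existence of the common face via the bound on $3$-faces at a $4$-vertex, handle the boundary cases $n=1$ or $m=1$, and use a minimal-counterexample device to locate the branching point---but the core idea is identical.
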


\begin{proof}
For otherwise, the two chains have a common $(4,4,4)$-face $T$ so that $T=T_a$ and $T=T_b'$.  Then we would have a $(T_0, T_1, T_{a-1},T, T_{b-1}', \ldots, T_1', T_0')$-chain. But by Lemma~\ref{chain-to-344}, this chain cannot exist in $G$.
%Let there exist two chains of triangles from $T$ to $T_n$ and $T'$ to $T_n$.  Let these chains of triangles share a triangle $T_i$ such that $1\leq i\leq n-1$.  Then there exists a chain of triangles from $T$ to $T_i$ to $T'$.  By Lemma~\ref{chain-to-344} this configuration cannot exist in $G$, a contradiction.
\end{proof}

\vspace{5mm}
{\large \textbf{Discharging Procedure}}

\vspace{5mm}
As we mentioned in the introduction, we set the initial charge of a vertex $v$ to be $\mu(v)=2d(v)-6$ and the initial charge of a face $f$ to be $\mu(f)=d(f)-6$.
For the discharging procedure we must introduce the notion of a bank, which serves as a temporary placeholder for charges. We set the bank with initial charge zero and will show it has a non-negative final charge.   \\

The following are the rules for discharging:

\begin{enumerate}[(R1)]
\item Each $4$-vertex gives $\frac{1}{2}$ to each pendant $3$-face and the rest to the incident $3$-faces evenly.
%\item  Every $4^+$-vertex gives $\frac{1}{2}$ to all pendant $3$-faces.
%\item  Every $4$-vertex incident a $3$-face and that has at most $1$ pendant $3$-face gives $\frac{1}{2}$ to the bank.
%\item Every $5$-vertex gives $\frac{3}{2}$ to all other incident faces.
\item  Every $6$-vertex gives $\frac{9}{4}$ to incident bad $(3,4,6)$-faces, $2$ to other incident $(3,4^-,6)$-faces and $\frac{3}{2}$ to all other incident $3$-faces; every $7^+$-vertex gives $\frac{9}{4}$ to all incident $3$-faces.
\item Every $6^+$-vertex gives $\frac{1}{2}$ to all pendant $3$-faces.
\item Every $(4^+, 4^+, 5^+)$-face and every $(4,4,4)$-face with a good $4$-vertex give $\frac{1}{2}$ to the bank and every bad $(3,4,5^+)$-face gives $\frac{1}{4}$ to the bank.
\item  The bank gives $\frac{1}{2}$ to each $(3, 4, 4)$-face without good $4$-vertices.

\item  Every $5$-vertex gives
\begin{enumerate}[(a)]
\item $2$ to each incident $(3,3,5)$-face and $9/4$ to each incident bad $(3,4,5)$-face.
\item $7/4$ to incident non-bad $(3,4,5)$-faces when also incident a bad $(3,4,5)$-face, and gives $2$ to incident non-bad $(3,4,5)$-faces otherwise.
\item $5/4$ to incident $(3,5^+,5^+)$-faces when also incident to a bad $(3,4,5)$-face and a pendant $(3,4^-,4^-)$-face, and gives $3/2$ to incident $(3,5^+,5^+)$-faces otherwise.
\item  $3/2$ to all $(4,4^+,5)$-faces with a chain of triangles to a $(3,4,4)$-face and gives $1$ to $(4,4^+,5)$-faces otherwise.
\item $1/2$ to each pendant $(3,4^-,4^-)$-face and $(3,3,k)$-face and $1/4$ to all other pendant $3$-faces.
\end{enumerate}
\end{enumerate}

\vspace{5mm}
Let $v$ be a $k$-vertex.  By Proposition~\ref{fact}, $k\geq 3$.

\vspace{2mm}
For $k=3$,  the final charge $\mu^*(v)$ of $v$ is $\mu^*(v)=\mu(v)=0$.

\vspace{2mm}
For $k=4$,  by (R1), the final charge of $v$ is $0$.   We note that $v$ gives at least $1$ to each incident $3$-face, and gives at least $3/2$ to $3$-faces when $v$ is a good $4$-vertex.

\vspace{2mm}
For $k=5$, if $v$ has at most one incident $3$-face, then by (R6a) and (R6e), $\mu^*(v)\geq \mu(v)-\frac{9}{4}\cdot 1-\frac{1}{2}\cdot 3=1/4>0$. Let $v$ have two incident $3$-faces $f_1$ and $f_2$ and a pendant $3$-face $f_3$.

Let $f_3$ be a $(3,4^-,4^-)$-face.  When $f_1$ is a bad $(3,4,5)$-face, by Lemma~\ref{5-vertex} $f_2$ cannot be a $(3,4^-,5)$-face.  By Lemma~\ref{bad-345-45k-with-chain}, if $f_2$ is a $(4,4^+,5)$-face, then there is no chain of triangles from some $(3,4,4)$-face to $f$, so by (R6a), (R6c), (R6d), and (R6e), $\mu^*(v)\geq \mu(v)-\frac{1}{2}\cdot 1-\frac{9}{4}\cdot 1-\frac{5}{4}\cdot 1=0$. When $f_1$ is a non-bad $(3,4,5)$-face, then by Lemma~\ref{5-vertex}, $f_2$ cannot be a $(3,4^-,5)$-face, so by (R6b), (R6c), (R6d), and (R6e), $\mu^*(v)\geq \mu(v)-\frac{1}{2}\cdot 1-2\cdot 1-\frac{3}{2}\cdot 1=0$.  When neither $f_1$ nor $f_2$ are $(3,4^-,5)$-faces, by (R6c), (R6d), and (R6e), $\mu^*(v)\geq \mu(v)-\frac{1}{2}\cdot 1-\frac{3}{2}\cdot 2=\frac{1}{2}>0$.

Now let $f_3$ be a $(3,4,5)$-face.  %When $f_1$ is a bad $(3,4,5)$-face, by Lemma~\ref{5-vertex}, $f_2$ cannot be a $(3,4^-,5)$-face, so by rules (R6a), (R6c), (R6d), and (R6e), $\mu^*(v)\geq \mu(v)-\frac{1}{4}\cdot 1-\frac{9}{4}\cdot 1-\frac{3}{2}\cdot 1=0$.
When $f_1$ or $f_2$ is $(3,4^-,5)$-face, by Lemma~\ref{5-vertex}, the other one cannot be a $(3,4^-,5)$-face, so by (R6b), (R6c), (R6d), and (R6e), $\mu^*(v)\geq \mu(v)-\frac{1}{4}\cdot 1-\frac{9}{4}\cdot 1-\frac{3}{2}\cdot 1=0$. When neither $f_1$ nor $f_2$ are $(3,4^-,5)$-faces, by rules (R6c), (R6d), and (R6e), $\mu^*(v)\geq \mu(v)-\frac{1}{4}\cdot 1-\frac{3}{2}\cdot 2=\frac{3}{4}>0$.

Finally, let $v$ have two incident $3$-faces $f_1$ and $f_2$, and no pendant $3$-face.  If $f_1$ is a bad $(3,4,5)$-face, then by Lemma~\ref{bad-345s}, $f_2$ cannot also be a bad $(3,4,5)$-face or a $(3,3,5)$-face.  Then by (R6), $\mu^*(v)\geq \mu(v)-\frac{9}{4}\cdot 1-\frac{7}{4}\cdot 1=0$.  If neither $f_1$ nor $f_2$ is a bad $(3,4,5)$-face, then by (R6b), (R6c), and (R6d), $\mu^*(v)\geq \mu(v)-2\cdot 2=0$.

\vspace{2mm}
For $k=6$, if $v$ is incident to at most two $3$-faces, then by (R2) and (R3), $\mu^*(v)\geq \mu(v)-\frac{9}{4}\cdot 2-\frac{1}{2}\cdot 2=\frac{1}{2}$.  So we can assume that $v$ is incident to three $3$-faces.  If $v$ is incident a bad $(3,4,6)$-face then by Lemma~\ref{bad-346} only one other incident $3$-face can be a $(3,4^-,6)$-face.  So by (R2), $\mu^*(v)\geq \mu(v)-\frac{9}{4}\cdot 2-\frac{3}{2}\cdot 1=0$.  If $v$ is not incident a bad $(3,4,6)$-face, then by (R2), $\mu^*(v)\geq \mu(v)-2\cdot 3=0$.

\vspace{2mm}
For $k\geq 7$, if $k$ is odd, then $\mu^*(v)\geq \mu(v)-\frac{k-1}{2}\cdot \frac{9}{4}-\frac{1}{2}\cdot 1=2k-6-\frac{9k-9}{8}-\frac{4}{8}=\frac{7k-43}{8}\geq \frac{3}{4}$. If $k$ is even, then $\mu^*(v)\geq \mu(v)-\frac{k}{2}\cdot \frac{9}{4}=2k-6-\frac{9k}{8}=\frac{7k-48}{8}\geq 1$.

\vspace{5mm}
Now let $f$ be a $k$-face.  Since $G$ is a simple graph, $k\geq 3$.  By the condition that there is no $4$-cycle and $5$-cycle, $k=3$ or $k\geq 6$.  Since no faces above degree $3$ are involved in the discharging procedure,  the final charge of $6^+$-face $f$ is $\mu^*(f)=\mu(f)=d(f)-6\geq 0$.

\vspace{2mm}
For $k=3$, by Lemma~\ref{334-face}, we have no $(3,3,4^-)$-faces, but we still have a few different cases:

\vspace{2mm}
\hspace{5mm} \textbf{Case 1:} Face $f$ is a $(3,3,5^+)$-face.  By Lemma~\ref{333-vertices}, $f$ will have two pendant neighbors of degree $4$ or higher.  So by (R1), (R2), (R4), and (R7), $\mu^*(f)\geq (3-6)+2\cdot 1+\frac{1}{2}\cdot 2=0$.

\vspace{2mm}
\hspace{5mm} \textbf{Case 2:} Face $f$ is a $(3,4,4)$-face. By Lemma~\ref{344-face}, $f$ will have a pendant neighbor of degree $4$ or higher.  If $f$ has a good $4$-vertex, then by (R1), $\mu^*(f)\geq \mu(f)+\frac{3}{2}\cdot 1+1\cdot 1+\frac{1}{2}\cdot 1=0$.  If $f$ has no good $4$-vertices, then by (R5), $f$ receives $1/2$ from the bank, so $\mu^*(f)=\mu(f)+1\cdot 2+\frac{1}{2}\cdot 1+\frac{1}{2}=0$.

\vspace{2mm}
\hspace{5mm} \textbf{Case 3:} Face $f$ is a bad $(3,4,5)$-face.  By (R1), (R4) and (R6a), $\mu^*(f)=\mu(f)+1\cdot 1+\frac{9}{4}\cdot 1-\frac{1}{4}\cdot 1=0$.

\vspace{2mm}
\hspace{5mm} \textbf{Case 4:} Face $f$ is a non-bad $(3,4,5)$-face.  If the $5$-vertex of $f$ is not incident a bad $(3,4,5)$-face, then by (R1) and (R6b), $\mu^*(f)=\mu(f)+1\cdot 1+2\cdot 1=0$. If the $5$-vertex of $f$ is incident a bad $(3,4,5)$-face, then by Lemma~\ref{35k-pendant-neighbor}, $f$ has a pendant neighbor of degree $4$ or higher.  So by (R1), (R6b), and (R6e), $\mu^*(f)\geq \mu(f)+1\cdot 1+\frac{7}{4}\cdot 1+\frac{1}{4}\cdot 1=0$.

\vspace{2mm}
\hspace{5mm} \textbf{Case 5:} Face $f$ is a $(3,4,6)$-face. If $f$ is a bad $(3,4,6)$-face, then by (R1), (R2), and (R4), $\mu^*(f)=\mu(f)+1\cdot 1+\frac{9}{4}\cdot 1-\frac{1}{4}\cdot 1=0$.  If $f$ is a non-bad $(3,4,6)$-face then by (R1) and (R2), $\mu^*(f)=\mu(f)+1\cdot 1+2\cdot 1=0$.

\vspace{2mm}
\hspace{5mm} \textbf{Case 6:} Face $f$ is a $(3,4,7^+)$-face.  By (R1) and (R2), $\mu^*(f)=\mu(f)+1\cdot 1+\frac{9}{4}\cdot 1=\frac{1}{4}$.

\vspace{2mm}
\hspace{5mm} \textbf{Case 7:} Face $f$ is a $(3,5,5)$-face.  If neither $5$-vertex of $f$ is also incident to a bad $(3,4,5)$-face and a pendant $(3,4^-,4^-)$-face, then by (R6c), $\mu^*(f)=\mu(f)+\frac{3}{2}\cdot 2=0$.  If one of the $5$-vertices of $f$ is also incident to a bad $(3,4,5)$-face and a pendant $(3,4^-,4^-)$-face then by Lemma~\ref{35k-pendant-neighbor}, $f$ must have a pendant neighbor of degree $4$ or higher.  In addition, by Lemma~\ref{355-two-bad-345} the other $5$-vertex of $f$ cannot have both an incident bad $(3,4,5)$-face and a pendant $(3,4^-,4^-)$-face.  So by (R6c) and (R6e), $\mu^*(f)=\mu(f)+\frac{5}{4}\cdot 1+\frac{1}{4}\cdot 1+\frac{3}{2}\cdot 1=0$.

\vspace{2mm}
\hspace{5mm} \textbf{Case 8:} Face $f$ is a $(3,5,6^+)$-face.  If the $5$-vertex of $f$ is not incident to a bad $(3,4,5)$-face and a pendant $(3,4^-,4^-)$-face then by (R2) and (R6c), $\mu^*(f)\geq \mu(f)+\frac{3}{2}\cdot 2=0$. If the $5$-vertex of $f$ has both an incident bad $(3,4,5)$-face and a pendant $(3,4^-,4^-)$-face, then by Lemma~\ref{35k-pendant-neighbor} $f$ must have a pendant neighbor of degree $4$ or higher.  So by (R2), (R6c), and (R6e), $\mu^*(f)\geq \mu(f)+\frac{5}{4}\cdot 1+\frac{1}{4}\cdot 1+\frac{3}{2}\cdot 1=0$.

\vspace{2mm}
\hspace{5mm} \textbf{Case 9:} Face $f$ is a $(3,6^+,6^+)$-face.  By (R2), $\mu^*(f)\geq \mu(f)+\frac{3}{2}\cdot 2=0$.

\vspace{2mm}
\hspace{5mm} \textbf{Case 10:} Face $f$ is a $(4,4,4)$-face. If $f$ has no good $4$-vertices then by (R1), $\mu^*(f)=\mu(f)+1\cdot 3=0$.  If $f$ has a good $4$-vertex then by (R1) and (R4), $\mu^*(f)\geq \mu(f)+1\cdot 2+\frac{3}{2}\cdot 1-\frac{1}{2}\cdot 1=0$.

\vspace{2mm}
\hspace{5mm} \textbf{Case 11:} Face $f$ is a $(4^+,4^+,5^+)$-face.  If $f$ has no chains of triangles to a $(3,4,4)$-face, then each incident vertex gives at least $1$ to $f$, so $\mu^*(f)\geq \mu(f)+1\cdot 3=0$. If $f$ has a chain of triangles to a $(3,4,4)$-face then by (R6d), at least one vertex must give $\frac{3}{2}$ to $f$, so combined with (R4),  $\mu^*(v)\geq \mu(v)+1\cdot 2+\frac{3}{2}\cdot 1-\frac{1}{2}\cdot 1=0$.

\vspace{5mm}
Finally, we show that the bank has a non-negative charge. By Lemma~\ref{existence-chain}, for each $(3,4,4)$-face without good $4$-vertices in $G$, there exist at least two chains of triangles from the $(3,4,4)$-face to a bad $(3,4,5^+)$-face, a $(4,4,4)$-face with a good $4$-vertex, or a $(4^+,4^+,5^+)$-face.  Then by Lemma~\ref{at-most-two-344}, there exist at most two chains of triangles to $(4^+,4^+,5^+)$-face from $(3,4,4)$-faces and at most one chain of triangles to a  $(3,4,5^+)$-face from $(3,4,4)$-faces. So we can see the transfer of charge from triangles with extra charge to the bank and back to $(3,4,4)$-faces is a transfer of $\frac{1}{4}$ charge over each chain of triangles.  Each $(4,4,4)$-face with a good $4$-vertex and $(4^+,4^+,5^+)$-face gives $\frac{1}{2}$ to the bank, and the bank will give at most $\frac{1}{4}\cdot 2$ to $(3,4,4)$-faces for each $(4,4,4)$-face with a good $4$-vertex or $(4^+,4^+,5^+)$-face.  Also, each bad $(3,4,5^+)$-face gives $\frac{1}{4}$ to the bank, and the bank will give at most $\frac{1}{4}\cdot 1$ to $(3,4,4)$-faces for each bad $(3,4,5^+)$-face.  Hence the bank will always have a non-negative charge.

\vspace{5mm}
This completes the discharging, showing that the final charges of all faces, vertices, and the bank are non-negative, a contradiction to \eqref{euler}.  This completes the proof of Theorem $1.1$.

\section{ $(3,0,0)$-coloring of planar graphs}

In this section, we give a proof for Theorem~\ref{300-coloring}. Our proof will again use a discharging method.   Let $G$ be a minimum counterexample to Theorem~\ref{300-coloring}, that is, $G$ is a planar graph without $4$-cycles and $5$-cycles and is not $(3,0,0)$-colorable, but any proper subgraph of $G$ is properly $(3,0,0)$-colorable.  We may assume that vertices colored by $1$ may have up to three neighbors colored by $1$.

The following is a very useful tool to extend a coloring on a subgraph of $G$ to include more vertices.

\begin{lemma}\label{extend-coloring}
 Let $H$ be a proper subgraph of $G$.  Given a $(3,0,0)$-coloring of $G-H$, if two neighbors of $v\in H$ are colored so that one is a $5^-$-vertex and the other is nicely colored, then the coloring can be extended to $G-(H-v)$ such that $v$ is nicely colored by $1$.
 \end{lemma}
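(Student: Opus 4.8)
The plan is to color $v$ by $1$ directly --- after at most one preliminary recoloring of a single neighbor of $v$ --- and then check the two things the conclusion demands: that the resulting coloring is still a legal $(3,0,0)$-coloring, and that $v$ ends up nicely colored by $1$, i.e. with at most two neighbors in color class $1$. Write $x$ for the colored $5^-$-vertex neighbor of $v$ and $y$ for the colored, nicely colored neighbor of $v$; since $x$ and $y$ are the only colored neighbors of $v$, giving $v$ the color $1$ automatically leaves $v$ with at most two neighbors colored $1$, so the niceness of $v$ comes for free. Hence the whole argument reduces to keeping the coloring legal, and the only vertices that could be endangered are $x$ and $y$: if one of them is itself colored $1$, it must not acquire a fourth neighbor in color class $1$.

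First I would dispose of $y$. If $y$ is colored $1$ then, being nicely colored, it already has at most two neighbors colored $1$, so after $v$ receives color $1$ it has at most three, which is legal; if $y$ is colored $2$ or $3$ it is irrelevant. Next I would split on $x$. If $x$ is not colored $1$, or is colored $1$ but has at most two neighbors colored $1$ besides $v$, I simply color $v$ by $1$ and stop, exactly as for $y$. The one remaining case is $x$ colored $1$ with exactly three neighbors colored $1$ other than $v$ (exactly three, not more, because the current coloring is legal). Here $x$ has degree at least $4$, so, being a $5^-$-vertex, it has degree $4$ or $5$ and therefore at most one neighbor besides $v$ and those three color-$1$ neighbors; that lone neighbor can forbid at most one of the colors $2,3$ at $x$, so some $c\in\{2,3\}$ is absent from all neighbors of $x$. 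I recolor $x$ by $c$ --- this preserves legality, since color classes $2$ and $3$ stay independent and every vertex that had $x$ as a color-$1$ neighbor only loses one --- and now $x$ is out of color class $1$, so coloring $v$ by $1$ leaves $v$ with at most one neighbor colored $1$, and the coloring is legal by the treatment of $y$.

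The main (indeed the only) obstacle is that last case, and it is exactly there that both hypotheses earn their keep: the $5^-$ bound on $x$ is what lets us recolor $x$ away from color $1$ --- a $6$-vertex colored $1$ could carry three neighbors colored $1$ plus one colored $2$ and one colored $3$, leaving no legal recoloring of it and hence no legal color for $v$ --- while the nice-coloring of $y$ is what stops the one neighbor of $v$ we never touch from overflowing when $v$ joins color class $1$. If $v$ happens to have only a single colored neighbor, playing both roles at once, the same argument applies verbatim.
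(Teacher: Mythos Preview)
Your proof is correct and follows essentially the same approach as the paper's: color $v$ by $1$, observe that the nicely colored neighbor $y$ poses no problem, and handle the bad case where the $5^-$-neighbor $x$ is colored $1$ with three other color-$1$ neighbors by recoloring $x$ into $\{2,3\}$ using the degree bound. The paper phrases the last step as a contradiction (if $x$ cannot be recolored then $d(x)\ge 6$), while you carry out the recoloring explicitly, but the underlying argument is identical.
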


\begin{proof}
Let $H$ be a subgraph of $G$ such that $G-H$ has a $(3,0,0)$-coloring.  Let $v\in H$ have neighbors $u$ and $w$ that are colored.  Let $d(u)\leq 5$ and let $w$ be nicely colored.  Color $v$ by $1$.  Since $w$ is nicely colored, if this coloring is invalid, then $u$ must be colored by $1$.  In addition, $u$ must have at least $3$ neighbors colored by $1$.  To avoid recoloring $u$ by $2$ or $3$, $u$ must have at least one neighbor of color $2$ and at least one neighbor of color $3$.  This implies that $d(u)\geq 6>5$, a contradiction.  So $v$ is colorable by $1$.  In addition, since the deficiency of color $1$ is $3$ and $v$ only has $2$ neighbors, it follows that $v$ is nicely colored.
\end{proof}

\begin{lemma}\label{3-to-6}
Every $3$-vertex in $G$ has a $6^+$-vertex as a neighbor.
\end{lemma}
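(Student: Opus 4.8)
The plan is to argue by contradiction: suppose $v$ is a $3$-vertex in $G$ all of whose neighbors have degree at most $5$, and produce a $(3,0,0)$-coloring of $G$. By the minimality of $G$, the graph $G-v$ admits a $(3,0,0)$-coloring $c$, and it suffices to show that $c$ extends to $v$.

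The key observation is that in any $(3,0,0)$-coloring a vertex colored $2$ or $3$ is automatically nicely colored: colors $2$ and $3$ have deficiency $0$, and in the valid coloring $c$ such a vertex has no neighbor of its own color, so it shares its color with $0=\max\{s_i-1,0\}$ neighbors. Consequently, if some neighbor $w$ of $v$ is colored $2$ or $3$, then $w$ is nicely colored; picking any other neighbor $u$ of $v$ — which exists since $v$ has exactly three neighbors, and which satisfies $d(u)\le 5$ by hypothesis — Lemma~\ref{extend-coloring} applies with $H=\{v\}$ and extends $c$ to a $(3,0,0)$-coloring of $G=G-(H-v)$, a contradiction.

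It remains to treat the case where every neighbor of $v$ is colored $1$ (so that none of them is nicely colored and the previous paragraph does not apply). In that situation no neighbor of $v$ is colored $2$, so assigning color $2$ to $v$ yields a valid $(3,0,0)$-coloring of $G$: the new vertex $v$ has no same-colored neighbor and introduces no defect anywhere. This again contradicts the choice of $G$, and the two cases are exhaustive.

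The argument is short, and I do not expect a serious obstacle. The only points requiring care are the two just noted: recognizing that being colored $2$ or $3$ gives "nicely colored" for free — which is exactly what makes Lemma~\ref{extend-coloring} applicable here — and separately disposing of the degenerate configuration in which all three neighbors of $v$ carry color $1$.
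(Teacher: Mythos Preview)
Your Case~2 is correct, but Case~1 has a gap in the invocation of Lemma~\ref{extend-coloring}. That lemma is stated and proved for a vertex $v\in H$ having exactly two colored neighbors; observe the final line of its proof, ``since the deficiency of color $1$ is $3$ and $v$ only has $2$ neighbors.'' When you take $H=\{v\}$, all three neighbors of $v$ lie in $G-v$ and are colored, so the lemma does not apply as written. Concretely: Lemma~\ref{extend-coloring} would color $v$ by $1$, and its proof argues that the only possible obstruction is at the designated $5^-$-neighbor $u$. But here there is a third colored neighbor $x$, and if $x$ is colored $1$ with three neighbors already colored $1$, then coloring $v$ by $1$ fails at $x$ --- a case the lemma's proof does not address.

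The hole is easily patched, since $d(x)\le5$ lets you recolor $x$ exactly as Lemma~\ref{extend-coloring} recolors $u$; but once that is written out you have essentially reproduced the paper's direct argument. The paper does not route through Lemma~\ref{extend-coloring} here at all: it notes that if two neighbors of $v$ share a color then $v$ can be properly colored, and otherwise the neighbor $u$ colored $1$ must already have three neighbors colored $1$ (to block $v$ from receiving color $1$) together with one neighbor of each of colors $2$ and $3$ (to block recoloring $u$), forcing $d(u)\ge6$ in $G$.
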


\begin{proof}
Let $v$ be a vertex in $G$ such that each neighbor vertex of $v$ has degree $5$.  By the minimality of $G$, $G-v$ is $(3,0,0)$-colorable.  If two vertices in $N(v)$ share the same color, then $v$ can be properly colored, so we can assume all the neighbors of $v$ are colored differently. Let $u$ be the neighbor of $v$ that is colored by $1$.  Then $u$ must have $3$ neighbors colored by $1$ to forbid $v$ to be colored by $1$.  In addition, $u$ must have neighbors colored by $2$ and $3$ to forbid $v$ to be colored by $2$ or $3$.  Then, $u$ has at least $6$ neighbors, a contradiction.
\end{proof}

Let a $(3,3,3^+)$-face to be \emph{poor} if the pendant neighbors of the two $3$-vertices have degrees at most $5$. A $(3, 3^+, 3^+)$-face is \emph{semi-poor} if exactly one of the pendant neighbors of the $3$-vertices has degree $5$ or less.   A $3$-face is \emph{non-poor} if each $3$-vertex on it has the pendant neighbor being a $6^+$-vertex.  Finally,  a \emph{poor 3-vertex} is a $3$-vertex on a poor or semi-poor $3$-face that has a $5^-$-vertex as its pendant neighbor.

\begin{lemma}\label{336-face}
All (3,3,6$^-$)-faces in G are non-poor.
\end{lemma}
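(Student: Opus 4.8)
The plan is to argue by contradiction and produce a $(3,0,0)$-coloring of $G$. Suppose $uvw$ is a $(3,3,6^-)$-face with $d(u)=d(v)=3$ that is not non-poor, and let $u'$ and $v'$ be the pendant neighbors of $u$ and $v$. If $d(w)\le 5$, then every $3$-vertex on the face has both of its face-neighbors of degree at most $5$, so by Lemma~\ref{3-to-6} its pendant neighbor has degree at least $6$; hence the face is non-poor, a contradiction. So $d(w)=6$, the face is a $(3,3,6)$-face with exactly the two $3$-vertices $u,v$, and, since it is not non-poor, one of $u',v'$ has degree at most $5$; without loss of generality $d(u')\le 5$.

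By minimality, $G-\{u,v\}$ has a $(3,0,0)$-coloring $\phi$, under which $w$ has exactly four neighbors. I first adjust $\phi$ at $w$: if $\phi(w)=1$ and $w$ has three neighbors of color $1$, then its fourth neighbor has color $2$ or $3$, so one of the colors $2,3$ is missing from the neighbors of $w$, and I recolor $w$ to that color (a legal change). After this step, either $\phi(w)\in\{2,3\}$, or $\phi(w)=1$ and $w$ has at most two neighbors of color $1$; in both cases $w$ is nicely colored. Now $u$ has exactly two colored neighbors, the $5^-$-vertex $u'$ and the nicely colored vertex $w$, so Lemma~\ref{extend-coloring} lets me color $u$ by $1$, nicely colored.

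It remains to color $v$, whose colored neighbors are $u$ (nicely colored, color $1$), $w$, and $v'$. If $\phi(w)=1$, then since both $u$ and $w$ have color $1$, some color in $\{2,3\}$ does not appear on $v'$, and I assign it to $v$. If $\phi(w)=2$, I color $v$ by $3$ unless $v'$ has color $3$, in which case I color $v$ by $1$: this is valid because none of $u$ (nicely colored), $w$ (color $2$), $v'$ (color $3$) can obstruct coloring a neighbor by $1$. The case $\phi(w)=3$ is symmetric. In every case this produces a $(3,0,0)$-coloring of $G$, contradicting the choice of $G$.

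The step I expect to be the crux, and the place where the hypothesis $d(w)\le 6$ enters, is the adjustment at $w$: reinstating the two $3$-vertices $u,v$ may put two new color-$1$ neighbors onto $w$, so $w$ must first be made safe, and this is possible only because deleting $u$ and $v$ leaves $w$ with just four neighbors, so as soon as $w$ has three neighbors of color $1$ it has a free color in $\{2,3\}$. The remaining work is the bookkeeping above: that this recoloring keeps the hypotheses of Lemma~\ref{extend-coloring} valid for $u$, and that $v$ can always be accommodated afterwards.
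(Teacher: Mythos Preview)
Your proof is correct. Both you and the paper reduce immediately to the $(3,3,6)$ case via Lemma~\ref{3-to-6}, and both exploit the fact that $w$ has only four colored neighbors in $G-\{u,v\}$ so that a color-$1$ vertex $w$ with three color-$1$ neighbors can be shifted into $\{2,3\}$. The difference is in what happens next. The paper colors $u$ properly and $v$ differently from $w,v'$, then runs a case analysis on the color of $w$: in the case $w$ is colored $3$, it derives a direct degree contradiction on the low-degree pendant neighbor (which it places at $v$, not $u$), never invoking Lemma~\ref{extend-coloring}. You instead normalize $w$ up front to be nicely colored, then apply Lemma~\ref{extend-coloring} to the vertex $u$ whose pendant neighbor is the $5^-$-vertex, and finish $v$ by a short three-way split on $\phi(w)$. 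Your route is a bit more uniform since it reuses the machinery of Lemma~\ref{extend-coloring} rather than an ad~hoc degree argument; the paper's route is self-contained and makes the role of the degree bound on the pendant neighbor more explicit.
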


\begin{proof}
For all $(3,3,5^-)$-faces in $G$, the proof is trivial by Lemma~\ref{3-to-6}.  Let $uvw$ be a $(3,3,6)$-face in $G$ with $d(u)=d(v)=3$ such that the pendant neighbor $v'$ of $v$ has degree at most $5$.  By the minimality of $G$, $G$\textbackslash $\{u,v\}$ is $(3,0,0)$-colorable.  Properly color $u$ and color $v$ differently than both $w$ and $v'$.  Then $u$ and $v$ are both colored by $2$ or $3$, w.l.o.g. assume $2$.  This means that $u'$ and $v'$ share the same color (where $u'$ is the pendant neighbor of $u$), different from the color of $w$.

Let $w$ be colored by $1$, then to avoid being able to recolor $u$ or $v$ by $1$, $w$ must have $3$ outer neighbors colored by $1$.  Then $w$ can be recolored by $2$ or $3$ depending on the color of its fourth colored neighbor.  We recolor $w$ by $2$ or $3$ and recolor $u$ and $v$ by $1$ to get a coloring of $G$, a contradiction.

So we may assume that $w$ is colored by $3$, and that $u'$ and $v'$ are colored by $1$.  To avoid recoloring $v$ by $1$, $v'$ must have at least $3$ neighbors colored by $1$.  In addition, to avoid recoloring $v'$ by $2$ or $3$ and coloring $v$ by $1$, $v'$ must have neighbors colored by both $2$ and $3$.  This contradicts that $v'$ has degree less than $6$.
\end{proof}

\begin{lemma}\label{incident-poor}
No vertex $v\in V(G)$ can have $\lfloor \frac{d(v)}{2} \rfloor$ incident poor $3$-faces.
\end{lemma}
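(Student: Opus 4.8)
The plan is to derive a contradiction by showing that such a $v$ would make $G$ a $(3,0,0)$-colorable graph. Put $d=d(v)$ and $\alpha=\lfloor d/2\rfloor$. I first pin down the local picture. Since $G$ has no $4$-cycle, no edge incident to $v$ lies on two triangles, so the $\alpha$ incident $3$-faces of $v$ run along $2\alpha$ distinct neighbors of $v$; together with Proposition~\ref{fact}(b) this means every incident $3$-face of $v$ is poor. Assume first $d\ge 4$. On a poor $(3,3,3^+)$-face through $v$ the vertex $v$ plays the role of the $3^+$-vertex, so the two neighbors of $v$ on that face are $3$-vertices whose pendant neighbors have degree at most $5$. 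Thus $2\alpha$ of the $d$ neighbors of $v$ are $3$-vertices of this type (all $d$ of them when $d$ is even); list them as $x_1,\dots,x_{2\alpha}$, grouped so that $vx_{2i-1}x_{2i}$ is the $i$-th poor face, and let $x_j'$ be the pendant neighbor of $x_j$, so $d(x_j')\le 5$.

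Now the reduction. Set $H=\{v\}\cup\{x_1,\dots,x_{2\alpha}\}$. The listed vertices are pairwise distinct, and, using again the absence of $4$-cycles, no $x_j'$ belongs to $H$ (a coincidence $x_j'=x_k$ would produce a $4$-cycle through $v$); hence each $x_j'$ is colored in any coloring of $G-H$. By the minimality of $G$ choose a $(3,0,0)$-coloring of the proper subgraph $G-H$ and extend it as follows. First color $v$ with a color in $\{2,3\}$ different from the (at most one) colored neighbor of $v$; then $v$ is properly colored, and it stays so throughout, because every neighbor of $v$ still to be colored will receive color $1$. Next color $x_1,x_2,\dots,x_{2\alpha}$ in this order, each by color $1$. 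At the moment $x_j$ is colored it has at most three colored neighbors: $v$, whose color lies in $\{2,3\}$ and so forbids nothing for color $1$ at $x_j$; possibly its face-partner, which is a $3$-vertex and so cannot be driven past deficiency $3$ by the new edge; and $x_j'$. So the only way color $1$ could fail at $x_j$ is that $x_j'$ is colored $1$ and would exceed deficiency $3$; but then, exactly as in the proof of Lemma~\ref{extend-coloring}, $x_j'$ would have at least three neighbors of color $1$, a neighbor of color $2$, a neighbor of color $3$, and the neighbor $x_j$, forcing $d(x_j')\ge 6$, contrary to $d(x_j')\le 5$. Hence either color $1$ is immediately valid at $x_j$, or $x_j'$ can be recolored into $\{2,3\}$ (the needed color has no neighbor of that color at $x_j'$, again by $d(x_j')\le 5$) --- a purely local and harmless change --- after which $x_j$ is colored $1$. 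Proceeding through all $j$ produces a $(3,0,0)$-coloring of $G$, a contradiction.

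For $d(v)=3$ the vertex $v$ is itself a $3$-vertex on its unique incident poor face $vab$; by Lemma~\ref{336-face} the third vertex $a$ has degree at least $7$, while the other $3$-vertex $b$ and $v$ have pendant neighbors $v',b'$ of degree at most $5$. One runs the same argument with $H=\{v,b\}$: color $G-H$, then color $b$ and $v$ --- the only new subtlety is not to raise the number of neighbors of $a$ in $a$'s own color above its deficiency. Since $a$ has at most that many such neighbors already in $G-H$, a short case analysis (use color $1$ on $v$ and $b$ when $a$ is not colored $1$, otherwise force $\{c(v),c(b)\}=\{2,3\}$ on the adjacent pair $v,b$), combined with the usual degree-$\le 5$ recoloring of $v'$ and $b'$, completes a $(3,0,0)$-coloring of $G$, again a contradiction.

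The step I expect to fight hardest with is the bookkeeping around the cascade of color-$1$ assignments: one must check that the successive recolorings of the pendant neighbors $x_j'$ (and of $v',b'$ in the case $d(v)=3$) are mutually consistent. This is exactly where the girth hypotheses (no $4$- and no $5$-cycles) are really used --- they force the vertices $v$, the $x_j$, and the $x_j'$ to be distinct and to avoid the adjacencies that would make the recolorings interact --- and it is also what makes the small case $d(v)=3$ require its own short case analysis rather than a one-line appeal to Lemma~\ref{extend-coloring}.
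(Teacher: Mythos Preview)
Your argument is correct and follows essentially the same strategy as the paper: delete $v$ together with the $3$-vertices on its incident poor faces, color the rest, give each such $3$-vertex color $1$ via the $d(x_j')\le 5$ argument (which is exactly Lemma~\ref{extend-coloring}), and color $v$ properly in $\{2,3\}$. The only differences are cosmetic---you color $v$ before the $x_j$'s rather than after (the paper does the reverse and splits on the parity of $d(v)$), and you treat $d(v)=3$ separately, whereas in the paper's reading the incident poor faces are $(3,3,d(v))$-faces, so that case is vacuous by Lemma~\ref{336-face}.
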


\begin{proof}
Let $v$ be a $k$-vertex in $G$ with $\lfloor \frac{k}{2} \rfloor$ incident poor $(3,3,k)$-faces.  Let $u_1, u_2, \cdots, u_k$ be the neighbors of $v$, and let $u_i'$ be the pendant neighbor if $u_i$ is in a poor $3$-face. Note that $d(u_i')\le 5$ and we know that all except possibly $u_k$ are in poor $3$-faces.

By the minimality of $G$, $G$\textbackslash $\{v, u_1, u_2, \cdots, u_{k-1}\}$ is $(3,0,0)$-colorable. If $d(v)$ is odd, then by Lemma~\ref{extend-coloring}, for all $i$ with $1\leq i\leq k-1$, we can color $u_i$ by $1$. Then we can properly color $v$ to get a coloring of $G$, so we can assume that $d(v)$ is even. If $d(v)$ is even, then by Lemma~\ref{extend-coloring}, for all $i$ with $1\leq i\leq k-2$, we can color $u_i$ by $2$.  Then if $u_k$ is colored by $1$ we can color $u_{k-1}$ properly and $v$ properly to get a coloring of $G$.  If $u_k$ is colored by $2$ or $3$, then it is colored properly and by Lemma~\ref{extend-coloring} we can color $u_{k-1}$ by $1$. Then we can properly color $v$ to get a coloring of $G$, a contradiction.
\end{proof}

\begin{lemma}\label{8-vertex}
 If an $8$-vertex $v$ is incident to three incident poor $(3,3,8)$-faces, then it cannot be incident to a semi-poor face, nor two pendant $3$-faces.
 \end{lemma}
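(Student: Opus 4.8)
The plan is to exhibit a $(3,0,0)$-coloring of $G$, contradicting its minimality. Suppose $v$ has degree $8$ and is incident to three poor $(3,3,8)$-faces $f_1,f_2,f_3$, with $3$-vertices $a_i,b_i$ on $f_i$ and pendant neighbours $a_i',b_i'$ (all of degree at most $5$), and suppose moreover that $v$ is incident to a semi-poor face $f_4$ (Case~A) or that $v$ has two pendant $3$-faces $g_1,g_2$ (Case~B). I would delete $v$ together with the six vertices $a_i,b_i$ and one or two further low-degree neighbours of $v$: in Case~A the $3$-vertex $c$ of $f_4$ whose pendant neighbour $c'$ has degree at most $5$ (or, in the sub-case where $f_4$ shares an edge at $v$ with some $f_i$, the remaining vertex $m$ of $f_4$, which then has degree at most $5$), and in Case~B the two bad $3$-vertices $z_1,z_2\in N(v)$ lying on $g_1,g_2$. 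This choice is arranged so that $v$ retains at most one undeleted neighbour. By minimality the rest of $G$ has a $(3,0,0)$-coloring; I then colour $v$ properly by whichever colour in $\{2,3\}$ does not appear on its (at most one) coloured neighbour, so that $v$ is nicely coloured.

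With $v$ coloured, I would colour the deleted vertices by $1$ one at a time. Each such vertex $x$ is adjacent to the nicely-coloured vertex $v$ and, by the time we reach it, also to an already-coloured vertex of degree at most $5$ --- for $a_i$ this is $a_i'$, for $b_i$ it is the already-coloured $a_i$ (a $3$-vertex) or $b_i'$, and for $c$ it is $c'$ --- so Lemma~\ref{extend-coloring} assigns $x$ colour $1$ and keeps it nicely coloured; since $v$ carries colour $2$ or $3$, these colour-$1$ vertices never conflict with $v$. Colouring $z_1,z_2$ in Case~B is the one step that is not immediate: each $z_j$ is a $3$-vertex adjacent to the coloured vertex $v$ and lying on the triangle $g_j$, and one must check it still admits a valid colour. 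Together this produces a $(3,0,0)$-coloring of $G$, the desired contradiction.

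The real work is the combinatorial bookkeeping in the first step: showing the six vertices $a_i,b_i$ are distinct, analysing where $f_4$ (resp.\ $g_1,g_2$) sits relative to $f_1,f_2,f_3$ and to each other, and excluding coincidences among $N(v)$ and the pendant vertices $a_i',b_i',c',z_j$. Here the absence of $4$- and $5$-cycles does most of the work --- a typical bad coincidence closes a $4$-cycle through $v$ --- and Lemma~\ref{incident-poor} disposes of the rest by forbidding a fourth poor $(3,3,8)$-face at $v$. I expect the main obstacle to be exactly this case analysis (it parallels the proof of Lemma~\ref{incident-poor}), and in particular guaranteeing in Case~B that the two pendant faces really do supply colour-$1$-assignable vertices rather than leaving $v$ boxed in by a high-degree, colour-$1$-saturated neighbour on some $g_j$. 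The hypothesis --- three poor faces \emph{plus} a semi-poor face or two pendant $3$-faces --- is precisely what makes one extra low-degree, colour-$1$-assignable neighbour of $v$ available, mirroring the role the fourth poor face plays in Lemma~\ref{incident-poor}.
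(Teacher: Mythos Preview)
You correctly flag the danger point in Case~B---a high-degree, colour-$1$-saturated neighbour on some $g_j$---but this is not merely a bookkeeping obstacle: it is fatal to the whole strategy of fixing $v\in\{2,3\}$ first. Suppose the two triangle-neighbours of $z_1$ on $g_1$ carry colours $1$ and $3$, the colour-$1$ vertex being a $6^+$-vertex that already has three colour-$1$ neighbours and also neighbours of colours $2$ and $3$ (hence not recolourable); and suppose the triangle-neighbours of $z_2$ on $g_2$ carry colours $1$ and $2$, with the colour-$1$ vertex similarly saturated. Then $z_1$ can be coloured only if $v\ne 2$, and $z_2$ only if $v\ne 3$; no choice of $v\in\{2,3\}$ rescues both. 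Nothing in your setup rules out this configuration of the inherited colouring on $G-H$.

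A smaller issue arises already in Case~A: your appeal to Lemma~\ref{extend-coloring} for $c$ ignores that $c$ has a \emph{third} coloured neighbour $m$ (the other vertex of $f_4$), over which that lemma gives no control. If $m$ is a saturated colour-$1$ vertex of high degree, colour $1$ is unavailable for $c$, and with $v$ and $m$ occupying two distinct colours you may have nothing left. This particular hole can be patched by choosing $v$'s colour with an eye on $m$ and $c'$, but the proposal does not do so.

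The paper's proof avoids both problems by colouring $v$ \emph{last} and branching on the colours of the awkward neighbours. In part~(ii) it first colours $z_1,z_2$ properly: if one of them receives $1$ or they agree, the six poor-face $3$-vertices get colour $1$ and $v$ is coloured properly; but if $\{z_1,z_2\}=\{2,3\}$, the poor-face $3$-vertices are instead coloured \emph{properly}, so each poor triangle contributes at most one colour-$1$ vertex, and then $v$ itself takes colour $1$. It is precisely this option of putting $v$ into the defective class---which your scheme forecloses at the outset---that handles the configuration above. Part~(i) is dealt with by the analogous branch on the colour of $m$.
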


\begin{proof}
Let $v$ be an $8$-vertex in $G$ with $3$ incident poor $(3,3,8)$-faces.  Let $u_1, u_2, \cdots, u_6$ be the $3$-vertices in the poor $(3,3,8)$-face and let $u'_1, u'_2, \cdots, u'_6$ be the corresponding pendant neighbors, respectively.  We know that for all $i$ with $1\leq i\leq 6$, $d(u'_i)\leq 5$.

(i)  Let $vu_7u_8$ be the incident semi-poor face with $u_7$ being the poor $3$-vertex.  Then by the minimality of $G$, $G$\textbackslash $\{v, u_1, u_2, \cdots, u_7\}$ is $(3,0,0)$-colorable.  By Lemma~\ref{extend-coloring}, $u_1, u_2, \cdots, u_6$ can be colored by $1$.  Then if $u_8$ is colored by $1$, we can properly color $u_7$ and then $v$ to get a coloring of $G$.  So we may assume that $u_8$ is not colored by $1$, in which case it is nicely colored and we may color $u_7$ with $1$ by Lemma~\ref{extend-coloring}, and then properly color $v$ to get a coloring of $G$, a contradiction.

(ii)  Let $u_7$ and $u_8$ be the bad $3$-vertices adjacent to $v$.  Then $G$\textbackslash $\{v, u_1, u_2, \cdots, u_7, u_8\}$ is $(3,0,0)$-colorable, by the minimality of $G$.    Properly color both $u_7$ and $u_8$.  If either $u_7$ or $u_8$ is colored by $1$ or both have the same color, then by Lemma~\ref{extend-coloring}, we may color $u_1, u_2, \cdots, u_6$ by $1$ and then properly color $v$.  So we may assume that $u_7$ is colored by $2$ and $u_8$ is colored by $3$.  Then we properly color $u_1, u_2, \cdots, u_6$, and it follows that for each $i$ with $1\leq i\leq 3$, $u_{2i-1}$ and $u_{2i}$ must be colored differently.  Then $v$ can have at most $3$ neighbors colored by $1$, all properly colored, so $v$ can be colored by $1$, a contradiction.
\end{proof}

\begin{lemma}\label{7-vertex}
If a $7$-vertex $v$ is incident to two poor $(3,3,7)$-faces, then it cannot be (i) incident to a semi-poor $(3,6^-,7)$-face and adjacent to a pendant $3$-face, or (ii) adjacent to three pendant $3$-faces.
\end{lemma}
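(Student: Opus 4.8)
The plan is to argue by minimality of $G$, in the same spirit as Lemmas~\ref{incident-poor} and~\ref{8-vertex}. In both parts, write $u_1,\dots,u_7$ for the neighbors of $v$ in the cyclic order around $v$, chosen so that the two poor $(3,3,7)$-faces are $vu_1u_2$ and $vu_3u_4$; two $3$-faces at $v$ cannot be consecutive in this cyclic order, as that would create a $4$-cycle, and likewise no pendant neighbor of $u_1,\dots,u_4$ lies in $N(v)\cup\{v\}$. Hence $d(u_1)=\dots=d(u_4)=3$, with pendant neighbors $u_1',\dots,u_4'$ of degree at most $5$. In part~(i) take $u_5$ to be the poor $3$-vertex of the semi-poor $(3,6^-,7)$-face $vu_5u_6$, so $d(u_5)=3$, $d(u_5')\le 5$, $d(u_6)\le 6$, and $u_7$ the $3$-vertex of the pendant $3$-face, so $d(u_7)=3$ and $u_7$ lies on a $3$-face avoiding $v$. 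In part~(ii), $u_5,u_6,u_7$ are all $3$-vertices, each lying on a $3$-face avoiding $v$.

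First I would delete from $G$ the vertex $v$ together with one $3$-vertex of each poor face ($u_1$ and $u_3$), the poor $3$-vertex $u_5$ of the semi-poor face in part~(i), and the $3$-vertices of the pendant $3$-faces ($u_7$ in part~(i); $u_5,u_6,u_7$ in part~(ii)); by minimality the rest of $G$ admits a $(3,0,0)$-coloring. Next I would re-insert the deleted vertices one at a time. Each $3$-vertex coming from a pendant $3$-face gets a color distinct from its two already colored neighbors on that face, hence becomes nicely colored. Each deleted poor-face $3$-vertex $u_i$ is then colored by $1$ via Lemma~\ref{extend-coloring}: its pendant neighbor $u_i'$ is a $5^-$-vertex, and its partner on the poor face, which was kept, at that moment has only its own pendant neighbor colored and so is nicely colored; in part~(i) the semi-poor $u_5$ can instead be colored properly off its two colored neighbors $u_6$ and $u_5'$. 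After this all seven neighbors of $v$ are colored, with $u_1,u_3$ nicely colored by $1$.

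It remains to color $v$, and this endgame is the crux. If some color in $\{2,3\}$ is absent from $N(v)$, color $v$ with it. Otherwise colors $2$ and $3$ both occur on $N(v)$ and we try color $1$: this succeeds unless $v$ has a $1$-colored neighbor that already has three $1$-colored neighbors, or $v$ has at least four $1$-colored neighbors. Since every $3$-vertex neighbor of $v$ has only two neighbors besides $v$, the first obstruction can come only from a high-degree neighbor of $v$ (in part~(i) the only candidate is $u_6$), and in either situation I would recolor one of the $1$-colored neighbors $u_i$ coming from a poor or semi-poor face: exactly as in the proof of Lemma~\ref{336-face}, its pendant neighbor $u_i'$, of degree at most $5$, cannot simultaneously forbid recoloring $u_i$ to $2$ and to $3$, since that would force $u_i'$ to have a neighbor of color $2$, a neighbor of color $3$, and three neighbors of color $1$, whence $d(u_i')\ge 6$, a contradiction. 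After a bounded number of such recolorings, either a color in $\{2,3\}$ is freed at $v$ or the color-$1$ class on $N(v)$ drops within the deficiency bound, so $v$ becomes colorable, contradicting the choice of $G$.

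I expect the last step --- the color bookkeeping at $v$ after the forced recolorings, possibly cascading one more level to the pendant neighbors $u_i'$ (which are small, $5^-$) as in Lemma~\ref{336-face} --- to be the main obstacle, just as in Lemmas~\ref{incident-poor} and~\ref{8-vertex}. The hypotheses in the statement are precisely what makes this bookkeeping tight: on top of two poor faces (three being already forbidden at a $7$-vertex by Lemma~\ref{incident-poor}), a semi-poor $(3,6^-,7)$-face plus one pendant $3$-face in~(i), and three pendant $3$-faces in~(ii), are the least extra structure that renders the configuration reducible; with anything weaker one would not be able to free a color for $v$.
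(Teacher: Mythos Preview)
Your overall plan (delete a neighborhood, color the remainder by minimality, then extend) is right, but the endgame has two concrete gaps that the paper's organization avoids.

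First, your recoloring step is not correct as stated. When you try to push a $1$-colored poor vertex $u_i$ off color $1$, you argue only about its pendant neighbor $u_i'$; but $u_i$ also has its face partner (which you \emph{kept} in the graph) as a colored neighbor, and nothing prevents that partner from carrying the one color in $\{2,3\}$ that $u_i'$ does not. Then $u_i$ is genuinely stuck at $1$, and no contradiction with $d(u_i')\le 5$ arises. The degree argument you quote from Lemma~\ref{336-face} is about forcing a vertex \emph{onto} color $1$ through a $5^-$-neighbor, not off of it.

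Second, in part~(i) your treatment of the $u_6$ obstruction does not work: if $u_6$ is colored $1$ and already has three $1$-colored neighbors outside $\{v,u_5\}$, then $v$ cannot take color $1$ no matter what you do to $u_1,u_3,u_5$; recoloring a different $u_i$ does not help $u_6$. The paper fixes this at the outset: after deleting $v$ and $u_5$, the vertex $u_6$ has at most four colored neighbors, so if it is colored $1$ but not nicely, it can be \emph{properly} recolored before anything else is done.

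The paper's proof sidesteps both issues by deleting \emph{all} the $3$-vertex neighbors (so $u_1,\dots,u_5,u_7$ in~(i), and $u_1,\dots,u_7$ in~(ii)), fixing $u_6$ to be nicely colored, then properly coloring the pendant-face vertices first and casing on their colors. If some color in $\{2,3\}$ is missing there, all remaining $u_i$ are colored $1$ via Lemma~\ref{extend-coloring} and $v$ gets the missing color; otherwise the remaining $u_i$ are colored \emph{properly}, so each face pair $\{u_{2j-1},u_{2j}\}$ contributes at most one vertex of color $1$, giving at most three $1$'s in $N(v)$, all nicely colored, and $v$ takes color $1$. No post-hoc recoloring is needed.
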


\begin{proof}
Let $v$ be a $7$-vertex in $G$ with $2$ incident poor $(3,3,7)$-faces.  Let $u_1, u_2, u_3,$ and $u_4$ be the $3$-vertices on the poor $(3,3,7)$-faces and let $u'_1, u'_2, u'_3,$ and $u'_4$ be their corresponding pendant neighbors, respectively.  We know that for all $i$ with $1\leq i\leq 4$, $d(u'_i)\leq 5$.

(i)  Let $vu_5u_6$ be a semi-poor face with $u_5$ being a poor $3$-vertex and $d(u_6)\le 6$ and let $u_7$ be a bad $3$-vertex adjacent to $v$.  By the minimality of $G$, $G$\textbackslash $\{v, u_1, u_2, u_3, u_4, u_5, u_7\}$ is $(3,0,0)$-colorable.  Since at this point $u_6$ has only $4$ colored neighbors, if $u_6$ is colored by $1$ then either it is nicely colored or it can be recolored properly.  If $u_6$ is not nicely colored, then recolor $u_6$ properly.

Color $u_7$ properly.  If $u_7$ is colored by $1$, then by Lemma~\ref{extend-coloring}, we can color $u_1, u_2, \cdots, u_5$ by $1$ and then color $v$ properly, a contradiction.  So we may assume w.l.o.g. that $u_7$ is colored by $2$.  Color $u_1, u_2, \cdots, u_5$ properly.  Then, for each $i$ with $1\leq i\leq 3$, $u_{2i}$ and $u_{2i-1}$ are colored differently and nicely.  This leaves $v$ with at most $3$ neighbors colored by $1$, all nicely, so we may color $v$ by $1$ to get a coloring of $G$, a contradiction.

(ii) Let $u_5$, $u_6$, and $u_7$ be the bad $3$-vertices adjacent to $v$.  By the minimality of $G$, $G$\textbackslash $\{v, u_1, \ldots, u_7\}$ is $(3,0,0)$-colorable.  Properly color $u_5$, $u_6$, and $u_7$.  If the set $\{u_5, u_6, u_7\}$ does not contain both colors $2$ and $3$, then by Lemma~\ref{extend-coloring},  we can color $u_1$, $u_2$, $u_3$, and $u_4$ by $1$ and color $v$ properly.  So we can assume that $\{u_5, u_6, u_7\}$ contains both colors $2$ and $3$.  This implies that at most one vertex is colored by $1$.  So we properly color $u_1$, $u_2$, $u_3$, and $u_4$.  Then $v$ has at most $3$ neighbors colored by $1$, all nicely, so we can color $v$ by $1$ to get a coloring of $G$, a contradiction.
\end{proof}

\begin{lemma}\label{377-face}
Let $uvw$ be a semi-poor $(3,7,7)$-face in $G$ such that $d(v)=d(w)=7$. Then vertices $v$ and $w$ cannot both be $7$-vertices that are incident to two poor $3$-faces,  one semi-poor $(3,7,7)$-face, and adjacent to one pendant $3$-face.
\end{lemma}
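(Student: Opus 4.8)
The plan is to derive a contradiction by producing a $(3,0,0)$-coloring of $G$. Suppose the configuration exists and fix notation. Let $u'$ be the pendant neighbor of $u$; since $uvw$ is semi-poor with $u$ its only $3$-vertex, $d(u')\le5$. As $d(v)=7$ and $v$ is incident exactly to its two poor $3$-faces, to $uvw$, and to one pendant $3$-face, the neighbors of $v$ are $u,w$, the four $3$-vertices $a_1,a_2,b_1,b_2$ of the poor faces $va_1a_2,vb_1b_2$ (with pendant neighbors $a_1',a_2',b_1',b_2'$, all of degree $\le5$), and the $3$-vertex $p$ of the pendant $3$-face $P_v$; symmetrically, $w$'s neighbors are $u,v$, the $3$-vertices $c_1,c_2,d_1,d_2$ of its two poor faces (pendant neighbors $c_i',d_i'$ of degree $\le5$), and the $3$-vertex $q$ of the pendant $3$-face $P_w$. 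Because $G$ has no $4$-cycle, $u$ is the only common neighbor of $v$ and $w$, and because a $3$-vertex lies on at most one $3$-face (Proposition~\ref{fact}), a short check shows that the vertices $u,v,w,a_1,a_2,b_1,b_2,c_1,c_2,d_1,d_2,p,q$ are pairwise distinct, that none of the pendant neighbors $u',a_i',b_i',c_i',d_i'$ or of the remaining vertices of $P_v,P_w$ lies among them, and that no two of $\{a_1',a_2'\},\{b_1',b_2'\},\dots$ are adjacent. Set $H=\{u,v,w,a_1,a_2,b_1,b_2,c_1,c_2,d_1,d_2,p,q\}$; by minimality, $G-H$ has a $(3,0,0)$-coloring $\phi$.

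I would extend $\phi$ in the order $p,q$; then $v,w$; then $a_1,a_2,b_1,b_2,c_1,c_2,d_1,d_2$; then $u$. Color $p$: its neighbors outside $H$ are the two other vertices of $P_v$, so $p$ is colorable; it is forced to a single color $c_p$ exactly when those carry two different colors (then $c_p$ is the third color, and $p$ is nicely colored if $c_p=1$ since it then has no neighbor colored $1$), and otherwise we choose $c_p$, preferring $c_p=1$. Handle $q$ likewise, obtaining $c_q$. Next color $v$ and $w$, where we need $\phi(v)\ne c_p$, $\phi(w)\ne c_q$ and $\phi(v)\ne\phi(w)$ (the edge $vw$): unless $p$ and $q$ are both forced to the same color $c\in\{2,3\}$, we can take $\phi(v),\phi(w)\in\{2,3\}$ distinct; in that remaining ``tight'' case $\{\phi(v),\phi(w)\}$ is forced to $\{1,3\}$ or $\{1,2\}$, and we choose which of $v,w$ gets color $1$. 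Then color the eight poor-face $3$-vertices: for $va_1a_2$, color $a_2$ first (it sees only $\phi(v)$ and $\phi(a_2')$) with a color $\ne\phi(v)$ — mandatory — and $\ne1$ if possible; then color $a_1$ (it sees $\phi(v),\phi(a_2),\phi(a_1')$) with a color $\ne\phi(v)$ and $\ne1$ if possible, and when the sole obstruction to using $1$ at $a_1$ is that $a_1'$ is colored $1$ with three neighbors colored $1$, first recolor $a_1'$ to the unique color of $\{2,3\}$ avoided by its remaining neighbor — legitimate exactly as in the proof of Lemma~\ref{extend-coloring} (a $5^-$-vertex cannot have three neighbors colored $1$ and neighbors of both other colors), and harmless since $a_1'\not\sim v$. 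Treat the other poor faces identically, and finally color $u$ (it sees $\phi(v),\phi(w),\phi(u')$), recoloring $u'$ if its color is forced.

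Checking correctness splits in two. If $\phi(v)\in\{2,3\}$, every poor-face neighbor of $v$ was colored $\ne\phi(v)$, and $\phi(w),\phi(p)\ne\phi(v)$ by construction and $\phi(u)\ne\phi(v)$ since $u\sim v$; so $v$ shares its color with no neighbor, and likewise $w$, and these cases need no further work. In the tight case, say $\phi(v)=1$: a poor face of $v$ contributes a neighbor colored $1$ only when its two pendant neighbors share a color of $\{2,3\}$, in which case the face's other vertex takes the remaining color of $\{2,3\}$ and the forced vertex has $v$ as its only neighbor colored $1$; combined with $\phi(p)=c\in\{2,3\}$, $\phi(w)\in\{2,3\}$, and at most one of $u$ colored $1$, the vertex $v$ receives at most three neighbors colored $1$, each having only $v$ (and possibly its own degree-$\le5$ pendant) among its neighbors colored $1$, so each absorbs $v$ and coloring $v$ by $1$ is valid; the same holds for $w$, and the edge $vw$ rules out any clash between them. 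Running this through the few color patterns, and checking that each recolored pendant stays valid, produces a $(3,0,0)$-coloring of $G$, a contradiction.

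The main obstacle is exactly this tight case: the two degree-$7$ vertices are both barely colorable and adjacent, and one must prevent them from both being forced to the same color of $\{2,3\}$ while each also sees four neighbors colored $1$. Coloring $p,q$ first to pin down $c_p,c_q$, using the latitude in the ``second'' poor-face vertices to keep the ``first'' ones off color $1$ unless their pendant neighbors force it, recoloring low-degree pendants as in Lemma~\ref{extend-coloring}, and exploiting the edge $vw$, are what make it work; making that final case analysis airtight is the real content of the proof.
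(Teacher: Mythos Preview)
Your approach is workable in principle but takes a harder road than the paper. The paper's key move is to color all four of $v$'s poor-face $3$-vertices with color~$1$ \emph{first}, via Lemma~\ref{extend-coloring}, before touching $v$ itself. Once those four neighbors carry color~$1$, any proper color for $v$ lies in $\{2,3\}$ automatically, so your ``tight case'' never arises on $v$'s side. After properly coloring $p$ and then $v$ (say $\phi(v)=2$), the paper properly colors $q$ and splits on whether $q=1$: if so, Lemma~\ref{extend-coloring} colors all of $w$'s remaining poor-face neighbors and $u$ by $1$, and $w$ takes the third color; if not, one properly colors $w$'s poor-face vertices and $u$, observes that each adjacent pair in a poor face receives distinct colors so at most three of $w$'s neighbors are colored~$1$ (all nicely), and gives $w$ color~$1$. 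Two short cases, no tight-case bog.

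There is also a concrete inconsistency in your procedure. You declare the rule ``$\ne\phi(v)$'' to be \emph{mandatory} for the poor-face vertices, yet in the tight case $\phi(v)=1$ your own correctness analysis requires letting $a_1=1$ when the pendants force it; as written, the rule leaves $a_1$ with no legal color (it must avoid $\phi(v)=1$, $a_2\in\{2,3\}$, and $a_1'\in\{2,3\}\setminus\{a_2\}$, exhausting all three colors). Moreover, you color $a_2$ without consulting $a_1'$, so when $a_2'=1$ you may pick the wrong element of $\{2,3\}$ and force $a_1=1$ unnecessarily; hence your claim that a poor face contributes a $1$-neighbor ``only when its two pendant neighbors share a color of $\{2,3\}$'' is not what the stated procedure actually produces. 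These defects are patchable, but patching them is exactly the deferred case analysis you flag at the end --- and the paper's coloring order makes the whole exercise unnecessary.
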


\begin{center}\begin{figure}[ht]
\includegraphics[scale=0.75]{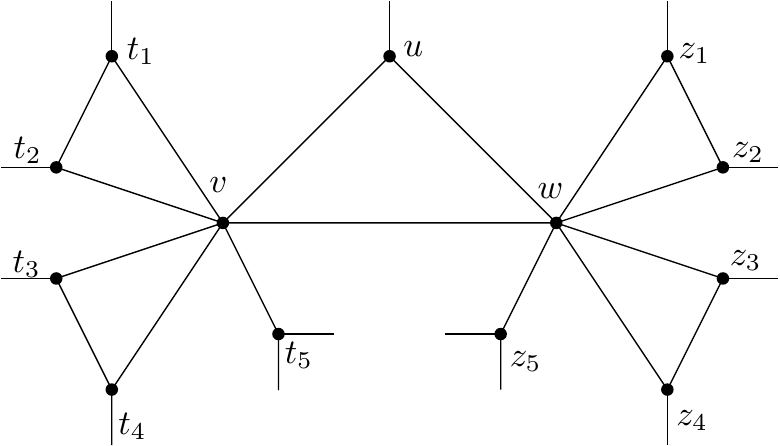} \hspace{10mm}
\includegraphics[scale=0.75]{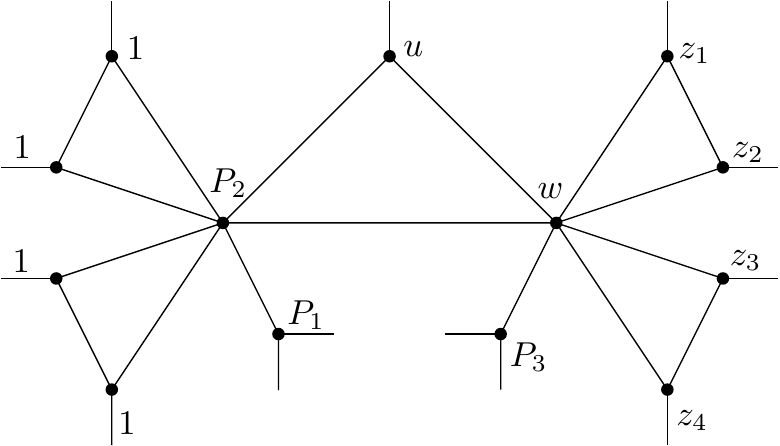}
\caption{Figure for Lemma~\ref{377-face}}
\label{fig9}
\end{figure}\end{center}

\begin{proof}
Let $uvw$ be a semi-poor $(3,7,7)$-face in $G$ such that $d(v)=d(w)=7$ and both $v$ and $w$ are incident to two poor $3$-faces, one $(3,7,7)$-face, and adjacent to one pendant $3$-face.  Let the neighbors of $v$ and $w$ be $t_1, t_2, \cdots, t_5$ and $z_1, z_2, \cdots, z_5$, respectively such that $t_5$ and $z_5$ are bad $3$-vertices (See Figure~\ref{fig9}).

By the minimality of $G$, $G$\textbackslash $\{u, v, w, t_1, t_2, \cdots, t_5, z_1, z_2, \cdots, z_5\}$ is $(3,0,0)$-colorable. By Lemma~\ref{extend-coloring}, we can color $t_1, t_2, t_3,$ and $t_4$ by $1$.  Then properly color $t_5$, $v$, and $z_5$ in that order.  Vertex $v$ will not be colored by $1$, so w.l.o.g. lets assume that $v$ is properly colored by $2$.  If $z_5$ is colored by $1$, then by Lemma~\ref{extend-coloring}, we can color $z_1, z_2, z_3, z_4,$ and $u$ by $1$ and then properly color $w$, to get a coloring of $G$, a contradiction.  So we can assume that $z_5$ is not colored by $1$.  Then we properly color $z_1, z_2, z_3, z_4$ and $u$,  so $w$ can have at most $3$ neighbors colored by $1$, all properly.  We can color $v$ by $1$ to get a coloring of $G$, a contradiction.
\end{proof}

\vspace{3mm}
{\large \textbf{Discharging Procedure:}}\\

%Let the initial charge of a vertex $v$ be $\mu(v)=2d(v)-6$ and the initial charge of a face $f$ be $\mu(f)=d(f)=6$.

We start the discharging process now. Recall that the initial charge for a vertex $v$ is $\mu(v)=2d(v)-6$ and the initial charge for a face $f$ is $\mu(f)=d(f)-6$.  \\

We introduce the following discharging rules:

\begin{enumerate}[(R1)]
\item Every $4$-vertex gives $1$ to each incident $3$-face.
\item Every $5$ and $6$-vertex gives $2$ to each incident $3$-face.
%\item Every $6$-vertex gives $2$ to each incident $3$-face and $1$ to each adjacent pendant $3$-face.
%\item Every $7^+$-vertex gives $3$ to each incident poor $(3,3,*)$-face.
\item every $6^+$-vertex gives $1$ to each adjacent pendant $3$-face.
%\item If a $7$-vertex is incident $2$ poor $3$-faces, incident a semi-poor $3$-face, and has a pendant $3$-face, then the $7$-vertex gives $1$ to the semi-poor $3$-face.  In all other cases a $7$-vertex gives $2$ to each semi-poor $3$-face.
%\item Every $8^+$-vertex gives $2$ to each incident semi-poor $3$-face.
%\item Every $7^+$-vertex gives $1$ to each incident non-poor $3$-face.
%\item Every $7^+$-vertex gives $1$ to all other incident $3$-faces.
\item Each $d$-vertex with $7\le d\le 10$ gives $3$ to each incident poor $(3, 3, *)$-face, $2$ to each incident semi-poor $3$-face, except $7$-vertices give $1$ to special semi-poor $3$-face, where a special semi-poor $(3,7,7+)$-face is a semi-poor $3$-face incident to a $7$-vertex which is also incident to two poor $3$-faces and adjacent to one pendant $3$-face.  Each $d$-vertex with $7\le d\le 10$ gives $1$ to all other incident $3$-faces.
\item Every $11^+$-vertex gives $3$ to all incident $3$-faces.
\end{enumerate}

\vspace{5mm}
Now let $v$ be a $k$-vertex.  By Proposition~\ref{fact}, $k\geq 3$.

When $k=3$, $v$ is not involved in the discharging process, so $\mu^*(v)=\mu(v)=0$.

When $k=4$, by Proposition~\ref{fact}, $v$ can have at most $2$ incident $3$-faces.  By (R1), $\mu^*(v)\geq \mu(v)-1\cdot 2=0$.

When $k=5$, by Proposition~\ref{fact}, $v$ can have at most $2$ incident $3$-faces.  By (R2), $\mu$*$(v)\geq \mu(v)-2\cdot 2=0$.

When $k=6$, by Proposition~\ref{fact}, $v$ can have $\alpha\le 3$ incident $3$-faces, and at most $(k-2\alpha)$ pendant $3$-faces.  By (R2) and (R3), $\mu^*(v)\geq \mu(v)-2\cdot \alpha -1\cdot (k-2\alpha)=k-6=0$.

%\vspace{2mm}
When $k=7$, $v$ has an initial charge $\mu(v)=7\cdot 2-6=8$.  By Lemma~\ref{incident-poor}, $v$ has at most two poor $3$-faces.  If $v$ has less than two incident poor $3$-faces, then by (R3) and (R4), $\mu$*$(v)\geq \mu(v)-3\cdot 1-1\cdot 5=0$ since $v$ gives at most one charge per vertex excluding vertices in poor $3$-faces.
So assume that $v$ has exactly $2$ incident poor $3$-faces.  By Lemma~\ref{7-vertex}, $v$ is adjacent to at most two pendant $3$-faces, and if it  is incident to a semi-poor $(3,6^-,7)$-face, then $v$ is not adjacent to a pendant $3$-face. So if $v$ is not incident to a semi-poor $(3,7^+, 7)$-face, then by (R3) and (R4), $\mu^*(v)\ge \mu(v)-3\cdot 2-2\cdot 1=0$;  If $v$ is incident to a semi-poor $(3,7^+,7)$-face, then by rules (R3) and (R4), $\mu^*(v)\geq \mu(v)-3\cdot 2-1\cdot 1-1\cdot 1=0$.

%\vspace{2mm}
When $k=8$, $v$ has an initial charge $\mu(v)=8\cdot 2-6=10$. By Lemma~\ref{incident-poor}, $v$ has at most three poor $3$-faces. If $v$ has less than $3$ incident poor $3$-faces, then by (R3) and (R4), $\mu^*(v)\geq \mu(v)-3\cdot 2-1\cdot 4=10-6-4=0$ since $v$ gives at most one charge per vertex excluding vertices in poor $3$-faces.
So let $v$ is incident to exactly $3$ poor $3$-faces.  By Lemma~\ref{8-vertex}, $v$ cannot be incident to a semi-poor $3$-face or adjacent to two pendant $3$-faces, then  $\mu^*(v)\ge \mu(v)-3\cdot 3-1\cdot 1=0$.

%\vspace{2mm}
When $k=9$, by Lemma~\ref{incident-poor}, $v$ is incident to at most three poor $3$-faces.  The worst case occurs when $v$ is incident $3$ poor $(3,3,9)$-faces, incident one semi-poor $(3,3,9)$-face, and pendant one $3$-face.  So by (R3) and (R4), $\mu$*$(v)\geq \mu(v)-1\cdot 1-3\cdot 3-2\cdot 1=12-1-9-2=0$.

When $k=10$, by Lemma~\ref{incident-poor}, $v$ is incident to at most four poor $(3,3,10)$-faces.  So by (R3) and (R4), $\mu^*(v)\geq \mu(v)-3\cdot 4-2\cdot 1=14-3\cdot 4-2\cdot 1=0$.

When $k\geq 11$, we assume that $v$ is incident to $\alpha$ $3$-faces, then by Proposition~\ref{fact}, $\alpha\le \lfloor k/2\rfloor$.  Thus the final charge of $v$ is $\mu^*\ge 2k-6-3\alpha-1\cdot (k-2\alpha)=k-\alpha-6\ge 0$.

\vspace{5mm}
Now let $f$ be a $k$-face in $G$.  By the conditions on $G$, $k=3$ or $k\geq 6$. When $k\geq 6$, $f$ is not involved in the discharging procedure, so $\mu*(f)=\mu(f)=k-6\geq 0$.  So in the following we only consider $3$-faces.

%\vspace{2mm}
%When $k=3$, we have a few different cases:

%\vspace{2mm}
\hspace{5mm} \textbf{Case 1:} $f$ is a $(4^+, 4^+, 4^+)$-face.  By the rules, each $4^+$-vertex on $f$ gives at least $1$ to $f$, so $\mu*(f)\geq \mu(f)+1\cdot 3=0$.

%\vspace{2mm}
\hspace{5mm} \textbf{Case 2:} $f$ is a $(3,4^+,4^+)$-face with vertices $u,v,w$ such that $d(u)=3$.  If $u$ is not a poor $3$-vertex, then by (R2), $f$ gains $1$ from the pendant neighbor of $u$ and by the other rules, $f$ gains at least $2$ from vertices on $f$, thus $\mu^*(f)\geq \mu(f)+1\cdot 3=0$. If $u$ is a poor vertex (it follows that $f$ is a semi-poor $3$-face), then by Lemma~\ref{3-to-6}, $f$ is a $(3, 4^+, 6^+)$-face.  Since $v$ or $w$ is a $6^+$-vertex, it gives at least $2$ to $f$ unless $f$ is a special semi-poor $(3,7,7^+)$-face, and as the other is a $4^+$-vertex, it gives at least $1$ to $f$.  Therefore, if $f$ is not a special semi-poor $3$-face, then $\mu^*(f)\geq \mu(f)+2\cdot 1+1\cdot 1=0$; if $f$ is a special semi-poor $(3,7,8^+)$-face, then $f$ receives at least $2$ from the $8^+$-vertex, so $\mu^*(v)\geq \mu(v)+2\cdot 1+1\cdot 1=0$. If $f$ is a special semi-poor $(3, 7, 7)$-face so that both $v$ and $w$ are incident to two poor $3$-faces, one semi-poor $(3, 7, 7)$-face and adjacent to one pendant $3$-face, then by Lemma~\ref{377-face}, is impossible.

%If both $d(v)=7$ and $d(w)=7$, by Claim $17$, both $v$ and $w$ can not be $7$-vertices that have $2$ incident poor $3$-faces, one incident semi-poor $3$-face, and a pendant $3$-face.  So by rule $5$, $\mu^*(f)\geq \mu(f)+2\cdot 1+1\cdot 1$.

%\vspace{2mm}
\hspace{5mm} \textbf{Case 3:} $f$ is a $(3,3,4^+)$-face with $4^+$-vertex $v$. If $d(v)\ge 11$, then by (R5), $\mu^*(f)\geq \mu(f)+3=0$.  So assume $d(v)\le 10$.  By Lemma~\ref{3-to-6}, if $4\le d(v)\le 6$, then each $3$-vertex has the pendant neighbor of degree $6$ or higher.  So by (R1) and (R3)  (when $d(v)=4$),  $\mu^*(f)\ge \mu(f)+1\cdot 3=0$, or by (R1) and (R2) (when $d(v)>4$),  $\mu^*(f)=\mu(f)+2\cdot 1+1\cdot 1=0$.

Let $7\leq d(v)\leq 10$. If $f$ is poor, then by (R4), $\mu^*(f)=\mu(f)+3\cdot 1=0$.  If $f$ is semi-poor, then one $3$-vertex on $f$ is adjacent to a $6^+$-vertex and thus by (R3) $f$ gains $1$ from it, together the $2$ that $f$ gains from $v$ by (R4), we have  $\mu^*(f)=\mu(f)+2\cdot 1+1\cdot 1=0$.  If $f$ is non-poor, then both $3$-vertices on $f$ are adjacent to the pendant neighbors of degrees more than $5$, thus by (R3) and (R4), $\mu^*(f)=\mu(f)+1\cdot 2+1\cdot 1=0$.

%\vspace{2mm}
\hspace{5mm} \textbf{Case 4:} $f$ is a $(3,3,3)$-face.  By Lemma~\ref{3-to-6}, each $3$-vertex will have the pendant neighbor of degree $6$ or higher, so by (R3), $\mu^*(f)=\mu(f)+1\cdot 3=0$.

\vspace{3mm}
Since for all $x\in V\cup F$, $\mu^*(x)\geq 0$, $\sum_{v\in V}\mu^*(v) +\sum_{f\in F}\mu^*(f) \geq 0$, a contradiction.  This completes the proof of Theorem $1.2$.

\section*{Acknowledgement}
The research is supported in part by NSA grant  H98230-12-1-0226 and NSF CSUMS grant.   The authors thank Bernard Lidicky for some preliminary discussion.

\end{document}